\newcommand{\ii }{{\rm i} }
\newcolumntype{C}[1]{>{\centering\arraybackslash}b{#1}}
\newcolumntype{R}[1]{>{\raggedleft\arraybackslash}b{#1}}
\newcolumntype{L}[1]{>{\raggedright\arraybackslash}b{#1}}
\newcolumntype{M}[1]{>{\centering}m{#1}}
\newtheorem{theo}{Theorem}[section]
\newtheorem{defin}{Definition}[section]
\newtheorem{lem}{Lemma}[section]
\newtheorem{remark}{Remark}[section]
\numberwithin{equation}{section}
\pgfplotsset{compat=newest}
\pgfplotsset{compat=1.12}
\date{}
\title{Local and global bifurcation of electron-states} 
\author{Emeric Roulley\thanks{International School for Advanced Studies (SISSA), Via Bonomea 265, 34136, Trieste, Italy.\\
		E-mail address : eroulley@sissa.it}}
\begin{document}
	\maketitle
	\begin{abstract}
		We study the bifurcation of traveling periodic electron layers, that we call \textit{electron-states}, from symmetric and asymmetric flat velocity strips in the phase space, for the one dimensional Vlasov-Poisson equation with space periodic condition. The boundaries of the constructed solutions are real-analytic and in uniform translation at the same speed in the space direction. These structures are obtained applying Crandall-Rabinowitz's Theorem using either the velocity or geometrical quantities related to the size of the strip as bifurcation parameters. In the first case, we can prove for any fixed symmetry, the emergence of a pair of branches and the local bifurcation diagram has a hyperbolic structure. In the symmetric situation, we find, for any large enough symmetry, one branch whose orientation close to the stationary solution depends on the sign of the prescribed speed of translation. As for the asymmetric case, we find either a countable or a finite number of bifurcation curves according to some constraints related to the prescribed speed of translation. The pitchfork (subcritical or supercritical) bifurcation is also described in this case. Finally, we briefly discuss the global continuation of these branches.
	\end{abstract}
	\tableofcontents
	\section{Intoduction}
	We present here the equation studied in this work which is a kinetic model in dimension one with space periodic boundary conditions. Then, we discuss a particular class of weak solutions to this equation called \textit{electron layers} that are renormalized characteristic functions of time and space dependent velocity domains. The velocity flat strips provide stationary solutions and we present some pertubative existence results of time periodic solutions close to these equilibrium states.
	\subsection{One dimensional Vlasov-Poisson equation and patches of electrons}
	We consider the 1D Vlasov-Poisson equation with space 1-periodic boundary condition
	\begin{equation}\label{VP eq}
		\partial_{t}f(t,x,v)+v\,\partial_{x}f(t,x,v)-E(t,x)\,\partial_{v}f(t,x,v)=0,\qquad(t,x,v)\in\mathbb{R}_{+}\times\mathbb{T}\times\mathbb{R}.
	\end{equation}
	Here $\mathbb{T}=\mathbb{R}/\mathbb{Z}$ denotes the flat torus that we liken to the segment $[0,1]$ where $0$ and $1$ are identified. The equation \eqref{VP eq} is a model that can be found in \cite[Chap. 13]{BM02} or \cite{D87}. It describes a collisionless neutral plasma composed with ions and electrons. The ions' significant inertia enables us to consider them as a neutralizing uniform background field. The unknown $f(t,x,v)$ represents the density of electrons traveling with speed $v$ at position $x$ and time $t.$ We make the assumption that the plasma properties are one-dimensional. Hence, the transport is unidirectional and the problem is simplified to one space dimension. In particular, the particle motion is only influenced by induced electrostatic forces and therefore we disregard electromagnetic interactions. The electric field $E$ is associated to the electric potential $\boldsymbol{\varphi}$ as follows
	\begin{equation}\label{def boldvarphi}
		E(t,x)=\partial_{x}\boldsymbol{\varphi}(t,x),\qquad\partial_{xx}\boldsymbol{\varphi}(t,x)=1-\int_{\mathbb{R}}f(t,x,v)dv.
	\end{equation}
	According to \eqref{def boldvarphi} and Taylor formula, the periodic constraint
	$$E(t,0)=E(t,1)$$
	is equivalent to the neutrality condition
	\begin{equation}\label{neutre cond}
		\int_{0}^{1}\int_{\mathbb{R}}f(t,x,v)dxdv=1.
	\end{equation}
	Observe that in the problem the quantity of interest is $\partial_{x}\boldsymbol{\varphi}$. Then $\boldsymbol{\varphi}$ is defined up to a time dependent additive constant that we can choose in order to impose, for any time, a zero space average condition for $\boldsymbol{\varphi}.$ As a consequence, introducing the inverse Laplace operator $\partial_{xx}^{-1}$ defined as follows
	$$\forall j\in\mathbb{Z}^*,\quad\partial_{xx}^{-1}\mathbf{e}_j=\frac{-\mathbf{e}_j}{4\pi^{2}j^2},\qquad\mathbf{e}_{j}(x)\triangleq e^{2\ii\pi jx},$$
	we get from \eqref{def boldvarphi}-\eqref{neutre cond}
	\begin{equation}\label{inv Lap phi}
		\boldsymbol{\varphi}(t,x)=\partial_{xx}^{-1}\left(1-\int_{\mathbb{R}}f(t,x,v)dv\right).
	\end{equation}
	The equation \eqref{VP eq} can recast as an active scalar equation. To this aim, we see the phase space $\mathbb{T}\times\mathbb{R}$ as a cylinder manifold embedded in $\mathbb{R}^3$ with radius $r=1$ and with vertical axis soul. The identification can be done through the local chart
	$$\begin{array}{rcl}
		(0,1)\times\mathbb{R} & \rightarrow & \mathbb{R}^3\\
		(x,v) & \mapsto & \big(\cos(2\pi x),\sin(2\pi x),v\big).
	\end{array}$$
At any point $(x,v)\in\mathbb{T}\times\mathbb{R},$ the tangent plane $T_{(x,v)}(\mathbb{T}\times\mathbb{R})\equiv\mathbb{R}^2$ admits the orthonormal basis (with the classical identification vector/directional derivative)
$$\mathtt{e}_{x}\triangleq\partial_{x},\qquad\mathtt{e}_{v}\triangleq\partial_{v}.$$
For any function $\mathtt{g}:\mathbb{T}\times\mathbb{R}\rightarrow\mathbb{R},$ the gradient is given by
$$\nabla_{x,v}\mathtt{g}(x,v)=\partial_{x}\mathtt{g}(x,v)\mathtt{e}_{x}+\partial_{v}\mathtt{g}(x,v)\mathtt{e}_{v}.$$
The orthogonal gradient is obtained by a rotation of angle $\tfrac{\pi}{2}$
$$\nabla_{x,v}^{\perp}\triangleq\mathtt{J}_{x,v}\nabla_{x,v},\qquad\underset{(\mathtt{e}_{x},\mathtt{e}_{v})}{\textnormal{Mat}}(\mathtt{J}_{x,v})=\begin{pmatrix}
	0 & -1\\
	1 & 0
\end{pmatrix}.$$
Consider the velocity field
$$\mathbf{v}:\begin{array}[t]{rcl}
	\mathbb{T}\times\mathbb{R} & \rightarrow & \displaystyle T(\mathbb{T}\times\mathbb{R})\triangleq\bigcup_{(x,v)\in\mathbb{T}\times\mathbb{R}}T_{(x,v)}(\mathbb{T}\times\mathbb{R})\\
	(x,v) & \mapsto & v\mathtt{e}_{x}-E(t,x)\mathtt{e}_{v},
\end{array}$$
which is divergence-free
\begin{equation}\label{div=0}
	\textnormal{div}_{x,v}\mathbf{v}(t,x,v)=\partial_{x}(v)+\partial_{v}\big(-E(t,x)\big)=0.
\end{equation}
More precisely, we can write
\begin{equation}\label{vel pot}
	\mathbf{v}(t,x,v)=-\nabla_{x,v}^{\perp}\boldsymbol{\Psi}(t,x,v),\qquad\boldsymbol{\Psi}(t,x,v)\triangleq\frac{v^2}{2}+\boldsymbol{\varphi}(t,x).
\end{equation}
Then, the equation \eqref{VP eq} becomes
\begin{equation}\label{active eq}
		\partial_{t}f(t,x,v)+\Big\langle\mathbf{v}(t,x,v)\,,\nabla_{x,v}f(t,x,v)\Big\rangle_{T_{(x,v)}(\mathbb{T}\times\mathbb{R})}=0,
	\end{equation}
where the scalar product defined by
	$$\Big\langle\alpha(x,v)\mathtt{e}_{x}+\beta(x,v)\mathtt{e}_{v}\,,\gamma(x,v)\mathtt{e}_{x}+\delta(x,v)\mathtt{e}_{v}\Big\rangle_{T_{(x,v)}(\mathbb{T}\times\mathbb{R})}\triangleq \alpha(x,v)\gamma(x,v)+\beta(x,v)\delta(x,v).$$
	The global existence of classical solutions to \eqref{VP eq} was discussed by Cottet-Raviart \cite{CR84} and the existence of periodic mild solutions has been studied by Bostan-Poupaud \cite{BP00}. In his thesis \cite[Thm. 2.1.1]{D87}, Dziurzynski proved that any initial datum $f_0\in L^{\infty}(\mathbb{T}\times\mathbb{R})$ with compact support satisfying \eqref{neutre cond} generates a unique global in time weak solution $f\in L^{\infty}\big([0,\infty),L^{\infty}(\mathbb{T}\times\mathbb{R})\big)$ which is Lagrangian, namely 
	$$f(t,x,v)=f_0\big(X_t^{-1}(x,v)\big),$$
	where $X_t$ is the flow map associated with the velocity field $\mathbf{v}$ and given by
	$$\partial_{t}X_{t}(x,v)=\mathbf{v}\big(t,X_{t}(x,v)\big),\qquad X_{0}(x,v)=(x,v).$$
	In particular, if we consider a bounded initial domain $\Omega_0$ and set $\Omega_t\triangleq X_t(\Omega_0)$, then
	$$f(t,x,v)=\frac{1}{|\Omega_t|}\mathbf{1}_{\Omega_t}(x,v)$$
	is a weak solution called \textit{patch of electrons.} In addition, the divergence-free condition \eqref{div=0} implies the conservation of the area, that is $|\Omega_t|=|\Omega_0|.$ We mention that patches of electrons do not physically model a concentration of electrons since the domain is in the phase space $\mathbb{T}\times\mathbb{R}.$ Patches of electrons are the kinetic equivalent of the vortex patches in fluid mechanics. We refer the reader to \cite{BM02,Y63} for a general introduction to the vortex patch dynamics. In the sequel, we shall work with a subclass of patches of electrons called \textit{electron layers} and defined as follows. Consider an initial condition with strip-shaped domain
	$$f_0(x,v)=\frac{1}{|S_0|}\mathbf{1}_{S_0}(x,v),\qquad S_0\triangleq\big\{(x,v)\in\mathbb{T}\times\mathbb{R},\quad\textnormal{s.t.}\quad v_{-}^{0}(x)< v< v_{+}^{0}(x)\big\}$$
	associated with two initial periodic profiles $v_{\pm}^{0}.$ Then, the corresponding weak solution writes
	$$f(t,x,v)=\frac{1}{|S_t|}\mathbf{1}_{S_t}(x,v),\qquad S_t\triangleq X_{t}(S_0).$$
	At later time $t>0$, the domain $S_t$ is still a velocity strip, that is one can find two periodic profiles $x\mapsto v_{\pm}(t,x)$ such that
	$$S_t=\big\{(x,v)\in\mathbb{T}\times\mathbb{R}\quad\textnormal{s.t.}\quad v_{-}(t,x)< v< v_{+}(t,x)\big\}.$$
	With these notations, the area-preserving condition writes
	\begin{equation}\label{area cond0}
		\int_{0}^{1}\big[v_{+}(t,x)-v_{-}(t,x)\big]dx=|S_t|=|S_0|=\int_{0}^{1}\big[v_{+}^{0}(x)-v_{-}^{0}(x)\big]dx.
	\end{equation}
Dziurzynski also showed in \cite{D87} the global in time persistence for the $C^1$ regularity of the boundary $\partial S_t.$ In addition, he numerically exposed possible folding formation. In this latter case, he proved possible loss of $C^3$-smoothness in finite time and excluded the formation of cusps. 
	In the next lemma, we provide a family of electron layer stationary solutions parametrized by two real numbers $a<b$ related to the geometry of the patch.
	\begin{lem}\label{lem trivial solutions flat strip}
		For any $(a,b)\in\mathbb{R}^2$ with $a<b,$ the initial profile
		\begin{equation}\label{stationary sol}
			f_0(x,v)=\frac{1}{b-a}\mathbf{1}_{S_{\textnormal{\tiny{flat}}}(a,b)}(x,v),\qquad S_{\textnormal{\tiny{flat}}}(a,b)\triangleq\mathbb{T}\times[a,b]
		\end{equation}
		generates a stationary electron layer. 
	\end{lem}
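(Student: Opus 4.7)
My plan is to show three things in order: that $f_0$ satisfies the neutrality condition \eqref{neutre cond}, that the associated electric field vanishes identically, and finally that the resulting flow preserves the strip $S_{\textnormal{\tiny{flat}}}(a,b)$, which by the Lagrangian formula $f(t,x,v)=f_0(X_t^{-1}(x,v))$ will yield $f(t,\cdot,\cdot)\equiv f_0$.

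\medskip
First, I would directly compute
$$\int_{0}^{1}\int_{\mathbb{R}}f_0(x,v)\,dv\,dx=\int_{0}^{1}\tfrac{1}{b-a}(b-a)\,dx=1,$$
so \eqref{neutre cond} holds and the Lagrangian theory of Dziurzynski quoted above applies, giving a unique weak solution $f(t,\cdot,\cdot)$. Next, observe that since $f_0$ is independent of $x$, the macroscopic density satisfies $\int_{\mathbb{R}}f_0(x,v)\,dv=1$ for every $x\in\mathbb{T}$. Plugging into \eqref{inv Lap phi} at $t=0$ gives $\boldsymbol{\varphi}(0,\cdot)=\partial_{xx}^{-1}(0)=0$, hence $E(0,\cdot)\equiv 0$.

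\medskip
With a vanishing electric field, the velocity field in \eqref{vel pot} reduces to $\mathbf{v}(x,v)=v\,\mathtt{e}_{x}$ and the characteristic ODE integrates explicitly as $X_t(x,v)=(x+tv\!\!\mod 1,\,v)$. Since the $v$-coordinate is preserved by the flow and the $x$-coordinate stays in $\mathbb{T}$, one has $X_t\big(S_{\textnormal{\tiny{flat}}}(a,b)\big)=S_{\textnormal{\tiny{flat}}}(a,b)$. Consequently $f(t,x,v)=f_0(X_t^{-1}(x,v))=f_0(x-tv,v)=f_0(x,v)$, which also confirms \emph{a posteriori} that the electric field remains zero for all positive times and closes the consistency loop: the constructed $f$ is indeed the unique weak solution, and it is time-independent.

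\medskip
There is no real obstacle here since the argument is a direct bootstrap from $x$-independence of $f_0$; the only subtle point is to notice that the zero space-average convention fixes the constant in $\boldsymbol{\varphi}$ so that $E\equiv 0$ unambiguously, rather than merely $\partial_{xx}\boldsymbol{\varphi}=0$ (which on $\mathbb{T}$ would otherwise leave an affine ambiguity forbidden by periodicity anyway).
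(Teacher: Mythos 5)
Your proof is correct. The essential step is the same as in the paper: since $f_0$ does not depend on $x$, the macroscopic density is identically $1$, so \eqref{inv Lap phi} gives $\boldsymbol{\varphi}\equiv 0$ and hence $E\equiv 0$. Where you diverge is in how you conclude stationarity. The paper simply substitutes the time-independent, $x$-independent ansatz into the Eulerian equation \eqref{VP eq}: with $\partial_t f=\partial_x f=0$ and $E=0$ every term vanishes, and the verification is over in one line. You instead pass through the Lagrangian picture: you integrate the characteristics explicitly under $E\equiv 0$ (free transport $X_t(x,v)=(x+tv,v)$), observe that the flat strip is invariant, and recover $f(t,\cdot)=f_0$ from the representation $f(t,x,v)=f_0(X_t^{-1}(x,v))$, closing the self-consistency loop for the field. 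Both arguments are sound; yours is slightly longer but has the merit of exhibiting the flow and making explicit why the strip itself (not just the density) is preserved, and of invoking the uniqueness of the weak solution to pin down that the constructed profile is \emph{the} solution. Your closing remark about the zero-average normalization of $\boldsymbol{\varphi}$ is a reasonable point of care, though on $\mathbb{T}$ the periodicity already rules out any nonconstant affine part, so $E=\partial_x\boldsymbol{\varphi}=0$ follows regardless of the choice of constant.
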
 
	\begin{proof}
		Let $a<b$ and consider a function $f$ in the form
		$$f(t,x,v)=f_0(x,v)=\frac{1}{b-a}\mathbf{1}_{a\leqslant v\leqslant b}.$$
		Then one has $\partial_{t}f=\partial_{x}f=0.$ Besides, the identity \eqref{inv Lap phi} together with the structure of $f$ and the neutrality condition \eqref{neutre cond} imply $\boldsymbol{\varphi}=0.$ Thus $E=0$ and $f$ solves \eqref{VP eq}.
	\end{proof}
	\begin{remark}
		More generally, the previous proof shows that any function of the variable $v$ only is a stationary solution of \eqref{VP eq}. This is a classical result in kinetic theory.
	\end{remark}
	\subsection{Perturbative approach for the electron layer dynamics}
	The scope of this subsection is to obtain the equations of motion for a general electron layer. Then, introducing small deformations of the flat strip $S_{\textnormal{\tiny{flat}}}(a,b)$ with $a<b$, we prove that they are solutions to a system of two coupled quasilinear transport equations with linear coupling, see \eqref{system rpm}.\\

	Due to the transport structure \eqref{active eq}, the dynamics is entirely characterized by the evolution of the boundaries. We provide here the complete derivation of the contour dynamics equations following the general computations in \cite[Sec. 3.1]{HMV15} but adapted to our notations. We denote $\Gamma_0^+$ and $\Gamma_0^-$ the two boundaries of the initial strip $S_0.$ They can be seen as the zero level sets of two $C^1$ regular functions $g_0^+$ and $g_0^-$ from $\mathbb{T}\times\mathbb{R}$ into $\mathbb{R},$ namely
	$$\Gamma_0^\pm=\big\{(x,v)\in\mathbb{T}\times\mathbb{R}\quad\textnormal{s.t.}\quad g_0^{\pm}(x,v)=0\big\},\qquad\forall(x,v)\in\mathbb{T}\times\mathbb{R},\quad\nabla_{x,v}\,g_0^\pm(x,v)\neq0.$$
	We set
	\begin{equation}\label{level set funct}
		g_{\pm}(t,x,v)\triangleq g_0^\pm\big(X_t^{-1}(x,v)\big),\qquad\textnormal{i.e.}\qquad g_{\pm}\big(t,X_t(x,v)\big)\triangleq g_0^\pm(x,v).
	\end{equation}
	By construction, for any time $t$, the boundaries $\Gamma_t^{+}$ and $\Gamma_t^-$ of $S_t$ are  the zero level sets of the functions $g_{+}(t,\cdot,\cdot)$ and $g_{-}(t,\cdot,\cdot),$ respectively
	$$\Gamma_t^{\pm}\triangleq X_t\big(\Gamma_0^{\pm}\big)=\big\{(x,v)\in\mathbb{T}\times\mathbb{R}\quad\textnormal{s.t.}\quad g_{\pm}(t,x,v)=0\big\}.$$
	Differentiating in time the relation \eqref{level set funct}, we get
	\begin{align*}
		0&=\partial_{t}g_{\pm}\big(t,X_t(x,v)\big)+\Big\langle\partial_tX_t(x,v)\,,\nabla_{x,v}\,g_{\pm}\big(t,X_t(x,v)\big)\Big\rangle_{T_{(x,v)}(\mathbb{T}\times\mathbb{R})}\\
		&=\partial_{t}g_{\pm}\big(t,X_t(x,v)\big)+\Big\langle\mathbf{v}\big(t,X_t(x,v)\big)\,,\nabla_{x,v}\,g_{\pm}\big(t,X_t(x,v)\big)\Big\rangle_{T_{(x,v)}(\mathbb{T}\times\mathbb{R})}.
	\end{align*}
	Now, we consider a parametrization $z_{\pm}(t,\cdot):\mathbb{T}\rightarrow\mathbb{T}\times\mathbb{R}$ of the boundary $\Gamma_t^{\pm}$, then
	$$\partial_tg_{\pm}\big(t,z_{\pm}(t,x)\big)+\Big\langle\partial_tz_{\pm}(t,x)\,,\nabla_{x,v}\,g_{\pm}\big(t,z_{\pm}(t,x)\big)\Big\rangle_{T_{z_{\pm}(t,x)}(\mathbb{T}\times\mathbb{R})}=0.$$
	In particular,
	\begin{equation}\label{CDE VP}
		\Big\langle\partial_tz_{\pm}(t,x)-\mathbf{v}\big(t,z_{\pm}(t,x)\big)\,,\nabla_{x,v}\,g_{\pm}\big(t,z_{\pm}(t,x)\big)\Big\rangle_{T_{z_{\pm}(t,x)}(\mathbb{T}\times\mathbb{R})}=0.
	\end{equation}
	Now, by construction, the vectors $\nabla_{x,v}\,g_{\pm}\big(t,z_{\pm}(t,x)\big)$ and $\partial_{x}z_{\pm}(t,x)$ are respectively orthogonal and transversal to $\Gamma_t^{\pm}$ inside $T_{z_{\pm}(t,x)}(\mathbb{T}\times\mathbb{R}).$ Hence, we can write $$\nabla_{x,v}\,g_{\pm}\big(t,z_{\pm}(t,x)\big)=\alpha\mathtt{J}_{x,v}\partial_{x}z_{\pm}(t,x),\qquad\alpha\in\mathbb{R}.$$
	As a consequence, the identity \eqref{CDE VP} becomes
	\begin{equation}\label{cde00}
		\Big\langle\partial_tz_{\pm}(t,x)\,,\mathtt{J}_{x,v}\partial_{x}z_{\pm}(t,x)\Big\rangle_{T_{z_{\pm}(t,x)}(\mathbb{T}\times\mathbb{R})}=\Big\langle\mathbf{v}\big(t,z_{\pm}(t,x)\big)\,,\mathtt{J}_{x,v}\partial_{x}z_{\pm}(t,x)\Big\rangle_{T_{z_{\pm}(t,x)}(\mathbb{T}\times\mathbb{R})}.
	\end{equation}
	Therefore, using \eqref{vel pot} and the fact that $\mathtt{J}_{x,v}$ is orthogonal for the scalar product on the tangent plane, we obtain
	\begin{align}\label{dxpsi}
		\partial_{x}\Big(\boldsymbol{\Psi}\big(t,z_{\pm}(t,x)\big)\Big)&=\Big\langle\nabla_{x,v}\boldsymbol{\Psi}\big(t,z_{\pm}(t,x)\big)\,,\partial_{x}z_{\pm}(t,x)\Big\rangle_{T_{z_{\pm}(t,x)}(\mathbb{T}\times\mathbb{R})}\nonumber\\
		&=\Big\langle\mathtt{J}_{x,v}\nabla_{x,v}\boldsymbol{\Psi}\big(t,z_{\pm}(t,x)\big)\,,\mathtt{J}_{x,v}\partial_{x}z_{\pm}(t,x)\Big\rangle_{T_{z_{\pm}(t,x)}(\mathbb{T}\times\mathbb{R})}\nonumber\\
		&=\Big\langle\nabla_{x,v}^{\perp}\boldsymbol{\Psi}\big(t,z_{\pm}(t,x)\big)\,,\mathtt{J}_{x,v}\partial_{x}z_{\pm}(t,x)\Big\rangle_{T_{z_{\pm}(t,x)}(\mathbb{T}\times\mathbb{R})}\nonumber\\
		&=-\Big\langle\mathbf{v}\big(t,z_{\pm}(t,x)\big)\,,\mathtt{J}_{x,v}\partial_{x}z_{\pm}(t,x)\Big\rangle_{T_{z_{\pm}(t,x)}(\mathbb{T}\times\mathbb{R})}.
	\end{align}
	Combining \eqref{cde00} and \eqref{dxpsi}, we deduce the following equations
	\begin{equation}\label{vp VP}
		\Big\langle\partial_tz_{\pm}(t,x)\,,\mathtt{J}_{x,v}\partial_{x}z_{\pm}(t,x)\Big\rangle_{T_{z_{\pm}(t,x)}(\mathbb{T}\times\mathbb{R})}=-\partial_{x}\Big(\boldsymbol{\Psi}\big(t,z_{\pm}(t,x)\big)\Big).
	\end{equation}
	We fix $(a,b)\in\mathbb{R}^2$ with $a<b$. We consider an initial domain $S_{0}$ close to the flat strip $S_{\textnormal{\tiny{flat}}}(a,b)$ defined in \eqref{stationary sol} and with the same area 
	$$|S_0|=|S_{\textnormal{\tiny{flat}}}(a,b)|=b-a.$$
	We denote
	\begin{equation}\label{patch f}
		f(t,x,v)=\frac{1}{b-a}\mathbf{1}_{S_t}(x,v),\qquad S_t=\big\{(x,v)\in\mathbb{T}\times\mathbb{R}\quad\textnormal{s.t.}\quad v_{-}(t,x)< v< v_{+}(t,x)\big\}
	\end{equation}
	the corresponding electron layer weak solution of \eqref{VP eq}. The area condition \eqref{area cond0} writes in this context
	\begin{equation}\label{area cond}
		\int_{0}^{1}\big[v_{+}(t,x)-v_{-}(t,x)\big]dx=b-a.
	\end{equation}
	We take as an ansatz
	\begin{equation}\label{zpm vpm}
		\begin{cases}
			z_{+}(t,x)=\big(x,b+ r_{+}(t,x)\big),\qquad\textnormal{i.e.}\qquad v_{+}(t,x)=b+ r_{+}(t,x),\\
			z_{-}(t,x)=\big(x,a+ r_{-}(t,x)\big),\qquad\textnormal{i.e.}\qquad v_{-}(t,x)=a+r_{-}(t,x).
		\end{cases}
	\end{equation}
	
	\begin{figure}[!h]
		\begin{center}
			\begin{tikzpicture}
				\draw[->](-1,0)--(8,0)
				node[below right] {$x$};
				\draw[->](0,-1)--(0,5)
				node[left] {$v$};
				\draw[black,dashed] (0,1)--(6.88,1);
				\draw[black,dashed] (0,4)--(6.88,4);
				\draw[black] (6.88,-1)--(6.88,5);
				\node at (-0.4,1) {$a$};
				\node at (-0.5,4) {$b$};
				\node at (-0.2,-0.2) {$0$};
				\node at (7.1,-0.2) {$1$};
				\draw[domain=0:6.88,thick, black,samples=500] plot [variable=\t] (\t,{4+0.5*sin(50*pi*\t)});
				\draw[domain=0:6.88,thick, black,samples=500] plot [variable=\t] (\t,{1+0.5*cos(50*pi*\t)});
				\draw[red] (2.85,0.1)--(2.85,-0.1);
				\node at (2.85,-0.3) {$\textcolor{red}{x_1}$};
				\node at (2.85,3.5) {$\textcolor{red}{r_+(t,x_1)}$};
				\draw[->,red](2.85,4)--(2.85,4.5);
				\draw[blue] (5.75,0.1)--(5.75,-0.1);
				\node at (5.75,-0.3) {$\textcolor{blue}{x_2}$};
				\node at (5.75,1.5) {$\textcolor{blue}{r_-(t,x_2)}$};
				\draw[->,blue](5.75,1)--(5.75,0.5);
			\end{tikzpicture}
		\end{center}
		\caption{Perturbation of the flat strip $S_{\textnormal{\tiny{flat}}}(a,b)$.}
	\end{figure}
	
	\noindent Observe that the area conservation condition \eqref{area cond} writes 
	\begin{equation}\label{area cond-2}
		\int_{0}^{1}\big[r_{+}(t,x)-r_{-}(t,x)\big]dx=0.
	\end{equation} 
	Now, on one hand
	\begin{align}\label{left hs}
		\Big\langle\partial_tz_{\pm}(t,x)\,,\mathtt{J}_{x,v}\partial_{x}z_{\pm}(t,x)\Big\rangle_{T_{z_{\pm}(t,x)}(\mathbb{T}\times\mathbb{R})}&=\Big\langle\partial_tr_{\pm}(t,x)\mathtt{e}_{v}\,,\mathtt{J}_{x,v}\big(\mathtt{e}_{x}+\partial_{x}r_{\pm}\mathtt{e}_{v}\big)\Big\rangle_{T_{z_{\pm}(t,x)}(\mathbb{T}\times\mathbb{R})}\nonumber\\
		&=\Big\langle\partial_tr_{\pm}(t,x)\mathtt{e}_{v}\,,\mathtt{e}_{v}-\partial_{x}r_{\pm}\mathtt{e}_{x}\Big\rangle_{T_{z_{\pm}(t,x)}(\mathbb{T}\times\mathbb{R})}\nonumber\\
		&=\partial_{t}r_{\pm}(t,x).
	\end{align}
	On the other hand, using \eqref{vel pot},
	\begin{equation}\label{Psi od zpm}
		\begin{cases}
			\boldsymbol{\Psi}\big(t,z_{+}(t,x)\big)=\frac{1}{2}\big(b+ r_{+}(t,x)\big)^2+\boldsymbol{\varphi}(t,x),\\
			\boldsymbol{\Psi}\big(t,z_{-}(t,x)\big)=\frac{1}{2}\big(a+ r_{-}(t,x)\big)^2+\boldsymbol{\varphi}(t,x).
		\end{cases}
	\end{equation}
	In addition, from \eqref{inv Lap phi} and \eqref{patch f}, we can write
	\begin{align}\label{boldvarphi}
		\boldsymbol{\varphi}(t,x)&=\partial_{xx}^{-1}\left(1-\int_{v_{-}(t,x)}^{v_{+}(t,x)}\frac{1}{b-a}dv\right)\nonumber\\
		&=\partial_{xx}^{-1}\Big(1-\tfrac{1}{b-a}\big(v_{+}(t,x)-v_{-}(t,x)\big)\Big)\nonumber\\
		&=-\tfrac{1}{b-a}\partial_{xx}^{-1}\big(r_{+}(t,x)-r_{-}(t,x)\big).
	\end{align}
	Inserting \eqref{left hs}, \eqref{Psi od zpm} and \eqref{boldvarphi} into \eqref{vp VP}, we end up with the following system
	\begin{equation}\label{syst}
		\begin{cases}
			\partial_{t}r_{+}(t,x)=-\partial_{x}\Big(\tfrac{1}{2}\big( b+r_{+}(t,x)\big)^2-\tfrac{1}{b-a}\partial_{xx}^{-1}r_{+}(t,x)+\tfrac{1}{b-a}\partial_{xx}^{-1}r_{-}(t,x)\Big),\vspace{0.2cm}\\
			\partial_{t}r_{-}(t,x)=-\partial_{x}\Big(\tfrac{1}{2}\big(a+ r_{-}(t,x)\big)^2-\tfrac{1}{b-a}\partial_{xx}^{-1}r_{+}(t,x)+\tfrac{1}{b-a}\partial_{xx}^{-1}r_{-}(t,x)\Big),
		\end{cases}
	\end{equation}
	which can recast in the following form
	\begin{equation}\label{system rpm}
		\begin{cases}
			\partial_{t}r_{+}(t,x)+\big( r_{+}(t,x)+b\big)\partial_{x}r_+(t,x)-\tfrac{1}{b-a}\partial_{x}^{-1}r_{+}(t,x)+\tfrac{1}{b-a}\partial_{x}^{-1}r_{-}(t,x)=0,\vspace{0.2cm}\\
			\partial_{t}r_{-}(t,x)+\big( r_{-}(t,x)+a\big)\partial_{x}r_-(t,x)-\tfrac{1}{b-a}\partial_{x}^{-1}r_{+}(t,x)+\tfrac{1}{b-a}\partial_{x}^{-1}r_{-}(t,x)=0,
		\end{cases}
	\end{equation}
where
$$\partial_{x}^{-1}\mathbf{e}_{j}\triangleq\frac{\mathbf{e}_j}{2\pi\ii j},$$
or equivalently, in real notations,
\begin{equation}\label{inv dx}
	\forall j\in\mathbb{N}^*,\qquad\partial_{x}^{-1}\cos(2\pi jx)\triangleq\frac{\sin(2\pi jx)}{2\pi j},\qquad\partial_{x}^{-1}\sin(2\pi jx)\triangleq-\frac{\cos(2\pi jx)}{2\pi j}\cdot
\end{equation}
	This is a system of two coupled quasilinear transport equations where the coupling is linear. Notice that if one tries to impose the constraint $r_+=r_-$, compatible with \eqref{area cond-2}, in order tp reduce the study to a scalar equation, then one finds only the trivial solution $r_+=r_-=\textnormal{Cte}$, i.e. the flat strip, as a solution. Remark that \eqref{syst} implies
	$$\partial_{t}\int_{0}^{1}r_{+}(t,x)dx=\partial_{t}\int_{0}^{1}r_{-}(t,x)dx=0.$$
	Then, we can impose
	\begin{equation}\label{zero average}
		\int_{0}^{1}r_{+}(t,x)dx=\int_{0}^{1}r_{-}(t,x)dx=0,
	\end{equation}
	which is compatible with \eqref{area cond-2}.

	\subsection{Main results}
	Here, we expose our main results. Our inspiration naturally comes from the fluid mechanics where uniformly rotating vortex patch solutions were obtained for various models, see \cite{B82,CCG16,DHR19,GHR23,GHM22,GHM22-1,HH15,HHH16,HHHM15,HMW20,HM16,HM16-2,HXX22,R17,R21,R22}. In the planar case, such solutions are called \textit{V-states} (for "Vortex states") according to the terminology introduced by Deem and Zabusky \cite{DZ78}. The second inspiration is borrowed (for instance) to the bifurcation of traveling waves for water-waves \cite{CSV16,LC25,N21,S47,S26}. In honor of Deem and Zabusky terminology, we give the following definition.
	\begin{defin}\label{def E-states}\textbf{(E-states or Electron-states)}
		Let $c\in\mathbb{R}$ and $\mathbf{m}\in\mathbb{N}^*.$ We say that an electron layer solution to \eqref{VP eq} is
		\begin{enumerate}[label=\textbullet]
			\item \textit{$\mathbf{m}$-symmetric} if
			$$v_{\pm}\big(t,x+\tfrac{1}{\mathbf{m}}\big)=v_{\pm}(t,x),\qquad\textnormal{i.e.}\qquad r_{\pm}\big(t,x+\tfrac{1}{\mathbf{m}}\big)=r_{\pm}(t,x).$$
			\item an \textit{E(lectron)-state with velocity $c$} if there exist 1-periodic profiles $\check{v}_{\pm}$, i.e. $\check{r}_{\pm}$, such that
			$$v_{\pm}(t,x)=\check{v}_{\pm}(x-ct),\qquad\textnormal{i.e.}\qquad r_{\pm}(t,x)=\check{r}_{\pm}(x-ct).$$
			The \textit{$\mathbf{m}$-symmetry} condition for an E-state becomes 
			$$\check{v}_{\pm}\big(x+\tfrac{1}{\mathbf{m}}\big)=\check{v}_{\pm}(x),\qquad\textnormal{i.e.}\qquad \check{r}_{\pm}\big(x+\tfrac{1}{\mathbf{m}}\big)=\check{r}_{\pm}(x).$$
		\end{enumerate}
	\end{defin}
	Observe that the E-states are traveling periodic solutions for which the boundaries look stationary in a moving frame in translation with speed $c$ in the $x$-direction. In what follows, we prove the emergence of E-states with analytic boundary. In particular, they do not present folding formation so there is no interaction with Dziurzynski's results. Moreover, for an electron-state, the electric field is time periodic. Since we work with non-smooth solutions close to non-smooth equilibria (patches), there is also no contradiction with the classical theory of Landau damping \cite{GI22,MV11}. We shall now state our main theorem.
	\begin{theo}\label{thm E-states}\textbf{(Local bifurcation of E-states)}\\
		The one dimensonal Vlasov-Poisson equation \eqref{VP eq} admits the following implicit solutions.
		\begin{enumerate}[label=(\roman*)]
			\item Let $a<b$ and $\mathbf{m}\in\mathbb{N}^*.$ There exist two local curves 
			$$\mathscr{C}_{\textnormal{\tiny{local}}}^{\pm,\mathbf{m}}(a,b)\triangleq\Big\{\left(c_{\mathbf{m}}^{\pm}(\mathtt{s},a,b),\check{r}_{\mathbf{m}}^{\pm}(\mathtt{s},a,b)\right),\quad|\mathtt{s}|<\delta\Big\},\qquad\delta>0$$ 
			corresponding to $\mathbf{m}$-symmetric E-states bifurcating from the flat strip $S_{\textnormal{\tiny{flat}}}(a,b)$ defined in \eqref{stationary sol} and admitting the expansion
			$$c_{\mathbf{m}}^{\pm}(\mathtt{s},a,b)\underset{\mathtt{s}\to0}{=}c_{\mathbf{m}}^{\pm}(a,b)+\mathtt{s}^2c_{\mathbf{m},2}^{\pm}(a,b)+O(\mathtt{s}^3),$$
			with
			$$c_{\mathbf{m}}^{\pm}(a,b)\triangleq\frac{a+b}{2}\pm\sqrt{\frac{\pi^2\mathbf{m}^2(b-a)^2+1}{4\pi^2\mathbf{m}^2}},\qquad c_{\mathbf{m},2}^{+}(a,b)>0,\qquad c_{\mathbf{m},2}^{-}(a,b)<0$$
			and
			$$\check{r}_{\mathbf{m}}^{\pm}(\mathtt{s},a,b)(x)\underset{\mathtt{s}\to0}{=}\mathtt{s}\begin{pmatrix}
				2\pi\mathbf{m}\big(a-c_{\mathbf{m}}^{\pm}(a,b)\big)-\tfrac{1}{2\pi\mathbf{m}(b-a)}\\
				-\tfrac{1}{2\pi\mathbf{m}(b-a)}
			\end{pmatrix}\cos(2\pi\mathbf{m}x)+O(\mathtt{s}^2).$$
			Both bifurcations are of pitchfork-type and the bifurcation diagram admits (locally close to the trivial line) a "hyperbolic" structure as represented in the following figure
			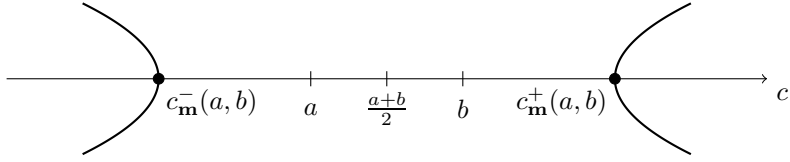
\begin{figure}[!h]
				\begin{center}
					\begin{tikzpicture}
						\draw[->] (-5,0)--(5,0)
						node[below right] {$c$};
						\draw[black] (0,0.1)--(0,-0.1);
						\node at (0,-0.4) {$\frac{a+b}{2}$};
						\draw[black] (-1,0.1)--(-1,-0.1);
						\node at (-1,-0.4) {$a$};
						\draw[black] (1,0.1)--(1,-0.1);
						\node at (1,-0.4) {$b$};
						\node at (-2.3,-0.3) {$c_\mathbf{m}^{-}(a,b)$};
						\node at (2.3,-0.3) {$c_\mathbf{m}^{+}(a,b)$};
						\filldraw [black] (-3,0) circle (2pt);
						\filldraw [black] (3,0) circle (2pt);
						\draw[domain=3:4,thick, black,samples=500] plot [variable=\t] (\t,{sqrt(\t-3)});
						\draw[domain=3:4,thick, black,samples=500] plot [variable=\t] (\t,{-sqrt(\t-3)});
						\draw[domain=-4:-3,thick, black,samples=500] plot [variable=\t] (\t,{sqrt(abs(3+\t))});
						\draw[domain=-4:-3,thick, black,samples=500] plot [variable=\t] (\t,{-sqrt(abs(3+\t))});
					\end{tikzpicture}
				\end{center}
				\caption{Representation of the velocity bifurcation diagram with "hyperbolic" structure.}\label{fig hyperbolic}
			\end{figure}
		\item Let $(a,c)\in\mathbb{R}^2.$
		We denote, for any $p\in\mathbb{R}^*$,
		$$N_1(p)\triangleq1+N_2(p),\qquad N_2(p)\triangleq \big\lfloor\tfrac{1}{2\pi|p|}\big\rfloor\cdot$$
		\begin{enumerate}[label=(\alph*)]
			\item Assume $a<c.$ Then, for any $\mathbf{m}\in\mathbb{N}^*$ with $\mathbf{m}\geqslant N_1(c-a)$, there exists a local curve
			$$\mathscr{C}_{\textnormal{\tiny{local}}}^{\mathbf{m}}(a,c)\triangleq\Big\{\big(b_{\mathbf{m}}(\mathtt{s},a,c),\check{r}_{\mathbf{m}}(\mathtt{s},a,c)\big),\quad|\mathtt{s}|<\delta\Big\},\qquad\delta>0$$
			corresponding to $\mathbf{m}$-symmetric E-states with velocity $c$ bifurcating from the flat strip $S_{\textnormal{\tiny{flat}}}\big(a,b_{\mathbf{m}}(a,c)\big)$ and admitting the following expansion
			$$b_{\mathbf{m}}(\mathtt{s},a,c)\underset{\mathtt{s}\to0}{=}b_{\mathbf{m}}(a,c)+\mathtt{s}^2b_{\mathbf{m},2}(a,c)+O(\mathtt{s}^3),$$
			with
			$$b_{\mathbf{m}}(a,c)\triangleq c+\frac{1}{4\pi^2\mathbf{m}^2(a-c)},\qquad b_{\mathbf{m},2}(a,c)<0\textnormal{ (subcritical bifurcation)}$$
			and
			$$\check{r}_{\mathbf{m}}(\mathtt{s},a,c)(x)\underset{\mathtt{s}\to0}{=}\mathtt{s}\begin{pmatrix}
				2\pi\mathbf{m}(a-c)-\tfrac{1}{2\pi\mathbf{m}\big(b_{\mathbf{m}}(a,c)-a\big)}\\
				-\tfrac{1}{2\pi\mathbf{m}\big(b_{\mathbf{m}}(a,c)-a\big)}
			\end{pmatrix}\cos(2\pi\mathbf{m}x)+O(\mathtt{s}^2).$$
			\item Assume $c<a<c+\tfrac{1}{2\pi}.$ Then, for any $\mathbf{m}\in\big\llbracket 1,N_2(a-c)\big\rrbracket$, there exists a local curve $\mathscr{C}_{\textnormal{\tiny{local}}}^{\mathbf{m}}(a,c)$ of $\mathbf{m}$-symmetric E-states with velocity $c$ bifucating from the flat strip $S_{\textnormal{\tiny{flat}}}\big(a,b_{\mathbf{m}}(a,c)\big)$ as above but with $b_{\mathbf{m},2}(a,c)>0$ (supercritical bifurcation).
		\end{enumerate}
		\begin{figure}[!h]
		\begin{center}
			\begin{tikzpicture}
				\draw[->] (-5,0)--(-1,0)
				node[below right] {$b$};
				\draw[->] (1,0)--(5,0)
				node[below right] {$b$};
				\draw[black] (-1.5,0.1)--(-1.5,-0.1);
				\node at (-1.5,-0.4) {$c$};
				\draw[black] (1.5,0.1)--(1.5,-0.1);
				\node at (1.5,-0.4) {$c$};
				\node at (-2.3,-0.3) {$b_\mathbf{m}(a,c)$};
				\node at (-3,-1.5) {$a<c$};
				\node at (2.4,-0.3) {$b_\mathbf{m}(a,c)$};
				\node at (3,-1.5) {$c<a<c+\tfrac{1}{2\pi}$};
				\filldraw [black] (-3,0) circle (2pt);
				\filldraw [black] (3,0) circle (2pt);
				\draw[domain=3:4,thick, black,samples=500] plot [variable=\t] (\t,{sqrt(\t-3)});
				\draw[domain=3:4,thick, black,samples=500] plot [variable=\t] (\t,{-sqrt(\t-3)});
				\draw[domain=-4:-3,thick, black,samples=500] plot [variable=\t] (\t,{sqrt(abs(3+\t))});
				\draw[domain=-4:-3,thick, black,samples=500] plot [variable=\t] (\t,{-sqrt(abs(3+\t))});
			\end{tikzpicture}
		\end{center}
	\caption{Representation of $b$-local bifurcation diagram close to asymmetric flat strips.}
	\end{figure}
	\item Let $(b,c)\in\mathbb{R}^2.$
	\begin{enumerate}[label=(\alph*)]
		\item Assume $b>c.$ Then, for any $\mathbf{m}\in\mathbb{N}^*$ with $\mathbf{m}\geqslant N_1(b-c)$, there exists a local curve
		$$\mathscr{C}_{\textnormal{\tiny{local}}}^{\mathbf{m}}(b,c)\triangleq\Big\{\big(a_{\mathbf{m}}(\mathtt{s},b,c),\check{r}_{\mathbf{m}}(\mathtt{s},b,c)\big),\quad|\mathtt{s}|<\delta\Big\},\qquad\delta>0$$
		corresponding to $\mathbf{m}$-symmetric E-states with velocity $c$ bifurcating from the flat strip $S_{\textnormal{\tiny{flat}}}\big(a_{\mathbf{m}}(b,c),b\big)$ and admitting the following expansion
		$$a_{\mathbf{m}}(\mathtt{s},b,c)\underset{\mathtt{s}\to0}{=}a_{\mathbf{m}}(b,c)+\mathtt{s}^2a_{\mathbf{m},2}(b,c)+O(\mathtt{s}^3),$$
		with
		$$a_{\mathbf{m}}(b,c)\triangleq c+\frac{1}{4\pi^2\mathbf{m}^2(b-c)},\qquad a_{\mathbf{m},2}(b,c)>0\textnormal{ (supercritical bifurcation)}$$
		and
		$$\check{r}_{\mathbf{m}}(\mathtt{s},b,c)(x)\underset{\mathtt{s}\to0}{=}\mathtt{s}\begin{pmatrix}
			2\pi\mathbf{m}\big(a_{\mathbf{m}}(b,c)-c\big)-\tfrac{1}{2\pi\mathbf{m}\big(b-a_{\mathbf{m}}(b,c)\big)}\\
			-\tfrac{1}{2\pi\mathbf{m}\big(b-a_{\mathbf{m}}(b,c)\big)}
		\end{pmatrix}\cos(2\pi\mathbf{m}x)+O(\mathtt{s}^2).$$
		\item Assume $c-\tfrac{1}{2\pi}<b<c.$ Then, for any $\mathbf{m}\in\big\llbracket 1,N_2(c-b)\big\rrbracket$, there exists a local curve $\mathscr{C}_{\textnormal{\tiny{local}}}^{\mathbf{m}}(b,c)$ of $\mathbf{m}$-symmetric E-states with velocity $c$ bifurcating from the flat strip $S_{\textnormal{\tiny{flat}}}\big(a_{\mathbf{m}}(b,c),b\big)$ as above but with $a_{\mathbf{m},2}(b,c)<0$ (subcritical bifurcation).
	\end{enumerate}
\begin{figure}[!h]
	\begin{center}
		\begin{tikzpicture}
			\draw[->] (-5,0)--(-1,0)
			node[below right] {$a$};
			\draw[->] (1,0)--(5,0)
			node[below right] {$a$};
			\draw[black] (-1.5,0.1)--(-1.5,-0.1);
			\node at (-1.5,-0.4) {$c$};
			\draw[black] (1.5,0.1)--(1.5,-0.1);
			\node at (1.5,-0.4) {$c$};
			\node at (-2.3,-0.3) {$a_\mathbf{m}(b,c)$};
			\node at (-3,-1.5) {$c-\tfrac{1}{2\pi}<b<c$};
			\node at (2.4,-0.3) {$a_\mathbf{m}(b,c)$};
			\node at (3,-1.5) {$b>c$};
			\filldraw [black] (-3,0) circle (2pt);
			\filldraw [black] (3,0) circle (2pt);
			\draw[domain=3:4,thick, black,samples=500] plot [variable=\t] (\t,{sqrt(\t-3)});
			\draw[domain=3:4,thick, black,samples=500] plot [variable=\t] (\t,{-sqrt(\t-3)});
			\draw[domain=-4:-3,thick, black,samples=500] plot [variable=\t] (\t,{sqrt(abs(3+\t))});
			\draw[domain=-4:-3,thick, black,samples=500] plot [variable=\t] (\t,{-sqrt(abs(3+\t))});
		\end{tikzpicture}
	\end{center}
\caption{Representation of $a$-local bifurcation diagram close to asymmetric flat strips.}
\end{figure}
			\item Let $c\in\mathbb{R}^*.$ Then, for any $\mathbf{m}\in\mathbb{N}^*$ with $\mathbf{m}\geqslant N_1(c),$ there exists a local curve
			$$\mathscr{C}_{\textnormal{\tiny{local}}}^{\mathbf{m}}(c)\triangleq\Big\{\big(a_{\mathbf{m}}(\mathtt{s},c),\check{r}_{\mathbf{m}}(\mathtt{s},c)\big),\quad|\mathtt{s}|<\delta\Big\},\qquad\delta>0$$
			corresponding to $\mathbf{m}$-symmetric E-states with velocity $c$ bifurcating from the symmetric flat strip\\ $S_{\textnormal{\tiny{flat}}}\big(-a_{\mathbf{m}}(c),a_{\mathbf{m}}(c)\big)$ and admitting the following expansion
			$$a_{\mathbf{m}}(\mathtt{s},c)\underset{\mathtt{s}\to0}{=}a_{\mathbf{m}}(c)+\mathtt{s}^2a_{\mathbf{m},2}(c)+O(\mathtt{s}^3),$$
			with
			$$a_{\mathbf{m}}(c)\triangleq\sqrt{\frac{4\pi^{2}\mathbf{m}^2c^2-1}{4\pi^2\mathbf{m}^2}},\qquad \begin{cases}
				a_{\mathbf{m},2}(c)<0, & \textnormal{if }c>0,\\
				a_{\mathbf{m},2}(c)>0, & \textnormal{if }c<0
			\end{cases}$$
			and
			$$\check{r}_{\mathbf{m}}(\mathtt{s},c)(x)\underset{\mathtt{s}\to0}{=}\mathtt{s}\begin{pmatrix}
				2\pi\mathbf{m}\big(a_{\mathbf{m}}(c)-c\big)-\tfrac{1}{4\pi\mathbf{m}a_{\mathbf{m}}(c)}\\
				-\tfrac{1}{4\pi\mathbf{m}a_{\mathbf{m}}(c)}
			\end{pmatrix}\cos(2\pi\mathbf{m}x)+O(\mathtt{s}^2).$$
			\begin{figure}[!h]
				\begin{center}
					\begin{tikzpicture}
						\draw[->] (-5,0)--(-1,0)
						node[below right] {$a$};
						\draw[->] (1,0)--(5,0)
						node[below right] {$a$};
						\draw[black] (-4.5,0.1)--(-4.5,-0.1);
						\node at (-4.5,-0.4) {$0$};
						\draw[black] (1.5,0.1)--(1.5,-0.1);
						\node at (1.5,-0.4) {$0$};
						\node at (-2.3,-0.3) {$a_\mathbf{m}(c)$};
						\node at (-3,-1.5) {$c>0$};
						\node at (2.4,-0.3) {$a_\mathbf{m}(c)$};
						\node at (3,-1.5) {$c<0$};
						\filldraw [black] (-3,0) circle (2pt);
						\filldraw [black] (3,0) circle (2pt);
						\draw[domain=3:4,thick, black,samples=500] plot [variable=\t] (\t,{sqrt(\t-3)});
						\draw[domain=3:4,thick, black,samples=500] plot [variable=\t] (\t,{-sqrt(\t-3)});
						\draw[domain=-4:-3,thick, black,samples=500] plot [variable=\t] (\t,{sqrt(abs(3+\t))});
						\draw[domain=-4:-3,thick, black,samples=500] plot [variable=\t] (\t,{-sqrt(abs(3+\t))});
					\end{tikzpicture}
				\end{center}
				\caption{Representation of the area local bifurcation diagram close to symmetric flat strips.}
			\end{figure}
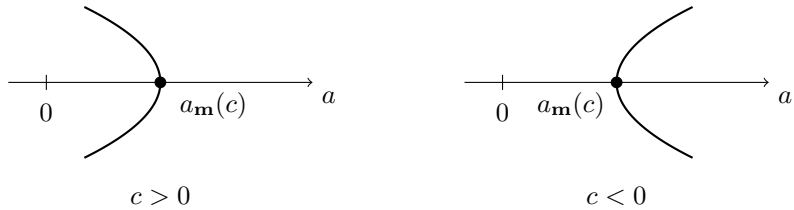
		\end{enumerate}
		In addition, each one of the previous bifurcations occurs at any level of Sobolev-analytic regularity $H^{s,\sigma}$ for $\sigma>0$ and  $s\geqslant1.$
	\end{theo}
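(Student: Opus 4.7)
The plan is to recast the search for E-states as a zero-finding problem and apply the Crandall--Rabinowitz theorem. Starting from \eqref{system rpm}, the traveling-wave ansatz $r_\pm(t,x)=\check{r}_\pm(x-ct)$ together with the mean-zero constraint \eqref{zero average} allow one to integrate once in $x$ and absorb the integration constants into the zero-mean projection $P_0$. In every item the problem then takes the form $\mathcal{F}(\mathtt{p},\check{r}_+,\check{r}_-)=0$, where $\mathtt{p}$ is the bifurcation parameter (chosen as $c$ in (i), as $b$ in (ii), as $a$ in (iii) and (iv)) and the other geometric data are frozen. I shall work on products of $\mathbf{m}$-symmetric, even, zero-mean functions in the Sobolev-analytic scale $H^{s,\sigma}$; since the sole nonlinearity is the polynomial $\tfrac12(b+\check{r}_+)^2$ (and its twin) and the only nonlocal piece is the smoothing operator $\partial_{xx}^{-1}$, the map $\mathcal{F}$ is real-analytic and preserves this scale, which yields the final $H^{s,\sigma}$ claim for free.

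The next step is a Fourier analysis of the linearization $L_0=D_{\check{r}}\mathcal{F}(\mathtt{p},0,0)$. Each allowed mode $j\mathbf{m}$ ($j\in\mathbb{N}^*$) decouples into a $2\times 2$ matrix
\[
M_{j\mathbf{m}}=\begin{pmatrix} c-b-\lambda_{j\mathbf{m}} & \lambda_{j\mathbf{m}}\\ -\lambda_{j\mathbf{m}} & c-a+\lambda_{j\mathbf{m}}\end{pmatrix},\qquad \lambda_{j\mathbf{m}}\triangleq\frac{1}{4\pi^2 j^2\mathbf{m}^2(b-a)},
\]
whose determinant simplifies to the dispersion polynomial $(c-b)(c-a)-\tfrac{1}{4\pi^2 j^2\mathbf{m}^2}$. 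Solving $\det M_{\mathbf{m}}=0$ in the chosen parameter produces exactly the critical values $c_{\mathbf{m}}^\pm$, $b_{\mathbf{m}}$, $a_{\mathbf{m}}$ of the statement, while the positivity constraint $b>a$ yields the thresholds $N_1, N_2$ in the asymmetric cases. Once the critical parameter is fixed, $(c-b)(c-a)=\tfrac{1}{4\pi^2\mathbf{m}^2}$ is pinned, so no higher harmonic $j\geqslant 2$ can lie in the kernel and $\ker L_0$ is one-dimensional, spanned by $\vec{w}\cos(2\pi\mathbf{m}x)$ for some $\vec{w}\in\ker M_{\mathbf{m}}$ that matches the vector prefactor displayed in the theorem.

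The Fredholm structure of $L_0$ then follows from writing it as a compact perturbation (via $\partial_{xx}^{-1}$) of the bounded-invertible multiplication operator $\mathrm{diag}(c-b,c-a)$, which secures codimension-one range. The transversality hypothesis of Crandall--Rabinowitz reduces to the scalar condition $e^{*\top}\partial_{\mathtt{p}}M_{\mathbf{m}}\vec{w}\neq 0$, with $e^*$ the left null vector of $M_{\mathbf{m}}$: a direct calculation shows that the only obstructions force algebraic identities excluded by the positivity of the dispersion factor $(c-b)(c-a)=\tfrac{1}{4\pi^2\mathbf{m}^2}>0$. The discrete symmetry $x\mapsto x+\tfrac{1}{2\mathbf{m}}$ acts as $\mathtt{s}\mapsto-\mathtt{s}$ on the cosine amplitude and is preserved by $\mathcal{F}$, so Crandall--Rabinowitz yields a pitchfork branch $\mathtt{p}(\mathtt{s})=\mathtt{p}_0+\mathtt{s}^2\mathtt{p}_2+O(\mathtt{s}^3)$ together with the companion expansion for $\check{r}_\pm$ whose leading coefficient is exactly the kernel vector.

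The principal technical step drives the signs of $c_{\mathbf{m},2}^\pm$, $b_{\mathbf{m},2}$, $a_{\mathbf{m},2}$. Since $\mathcal{F}$ is quadratic, its third derivative vanishes and only the second-order corrector $\vec{v}_2$, living on the harmonic $2\mathbf{m}$, feeds into the $\mathtt{s}^3$ equation; there $M_{2\mathbf{m}}$ is invertible with $\det M_{2\mathbf{m}}=\tfrac{3}{16\pi^2\mathbf{m}^2}>0$, so $\vec{v}_2$ is determined explicitly and projecting onto the cokernel gives
\[
\mathtt{p}_2\,\langle e^*,\partial_{\mathtt{p}}M_{\mathbf{m}}\vec{w}\rangle=\tfrac12\,\langle e^*,\vec{w}\odot\vec{v}_2\rangle,\qquad \vec{v}_2=\tfrac14 M_{2\mathbf{m}}^{-1}\vec{w}^{\odot 2},
\]
where $\odot$ is the componentwise product. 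The main obstacle is to extract the sign of $\mathtt{p}_2$ from this cumbersome rational expression in $(a,b,c,\mathbf{m})$; the cleanest route is to re-parametrize through the two roots of the dispersion quadratic in $c$ (respectively the admissible root of the chosen-parameter dispersion in (ii)--(iv)), which after simplification presents $\mathtt{p}_2$ as a positive prefactor times a signed algebraic quantity matching exactly the inequalities of the theorem: opposite signs on the two branches in (i) giving the hyperbolic picture of Figure~\ref{fig hyperbolic}, sub- or supercritical pitchfork in (ii)--(iii) depending on whether $c$ lies outside or inside the base strip, and the analogous dichotomy in (iv) governed by $\mathrm{sign}(c)$.
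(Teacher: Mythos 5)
Your overall strategy coincides with the paper's: reformulate the traveling-wave problem as $F(a,b,c,\check r_+,\check r_-)=0$ on the even, zero-mean, $\mathbf{m}$-symmetric Sobolev-analytic scale, diagonalize the linearization into $2\times 2$ Fourier blocks whose determinant is (up to the factor $4\pi^2j^2\mathbf{m}^2$) the dispersion relation $(c-b)(c-a)-\tfrac{1}{4\pi^2j^2\mathbf{m}^2}$, read off the critical parameters and the thresholds $N_1,N_2$, check the one-dimensional kernel via $\Delta_{j\mathbf{m}}=j^2-1>0$ for $j\geqslant 2$, get Fredholmness by compactness of the $\partial_{xx}^{-1}$ part, and apply Crandall--Rabinowitz--Shi. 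Your two small departures (integrating the equation once so as to land in the even target space, and deducing $\mathtt{p}'(0)=0$ from the translation $x\mapsto x+\tfrac{1}{2\mathbf{m}}$ rather than from the orthogonality of $\sin(4\pi\mathbf{m}x)$ to the cokernel) are both legitimate and equivalent to what the paper does; your computation $\det M_{2\mathbf{m}}=\tfrac{3}{16\pi^2\mathbf{m}^2}$ matches the paper's $\Delta_{2\mathbf{m}}=3$.

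The genuine gap is in the last step, which is where the actual content of the sign claims lives. Asserting that re-parametrizing through the roots of the dispersion quadratic "after simplification presents $\mathtt{p}_2$ as a positive prefactor times a signed algebraic quantity matching exactly the inequalities of the theorem" is not a proof: the resulting expressions are not sign-definite by inspection, and the paper has to work for each case. For item (i) one lands on $\mathtt{f}^{\pm}=\tfrac{1}{4\pi^3z^3(b-a)^3}\big(\pm A\sqrt{\pi^2z^2(b-a)^2+1}+B\big)$ with explicit degree-7 and degree-8 polynomials $A,B$; positivity of $\mathtt{f}^{+}$ is immediate but positivity of $\mathtt{f}^{-}$ requires squaring, checking that the resulting polynomial identity has no roots, and then a continuity-plus-asymptotics argument to fix the sign. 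For items (ii)--(iii) the sign reduces to showing $\mathtt{h}_1(x)+4>0$ away from $x=1$, where $\mathtt{h}_1(x)=4x^6-3x^5-2x^3-3x$ attains its global minimum $-4$ exactly at the excluded value $x=4\pi^2\mathbf{m}^2(a-c)^2=1$ — a borderline fact you could not anticipate without doing the computation. Item (iv) needs yet another squaring argument exploiting $2\pi|c|z\geqslant 1$. Since the hyperbolic structure of Figure~\ref{fig hyperbolic} and the sub/supercritical dichotomies are precisely the statements $c_{\mathbf{m},2}^{\pm}\gtrless 0$, $b_{\mathbf{m},2}\gtrless0$, $a_{\mathbf{m},2}\gtrless0$, your proposal establishes the existence and leading-order expansion of the curves but not these classifications. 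A secondary, more minor point: your transversality verification is also only asserted ("a direct calculation shows"); there the computation is short, but note that for the $b$- and $a$-bifurcations the relevant quantity is proportional to $1-4\pi^2\mathbf{m}^2(a-c)^2$ (resp. $4\pi^2\mathbf{m}^2(b-c)^2-1$), whose nonvanishing comes from the strict constraint $b>a$ rather than from the positivity of $(c-b)(c-a)$ as you suggest, and whose \emph{sign} flips between the two sub-cases — a fact you need anyway to conclude the pitchfork orientation.
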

	
To prove Theorem \ref{thm E-states} we use Crandall-Rabinowitz-Shi's Theorem \ref{thm CR+S} for the construction of the local curves and study the pitchfork phenomenon. Then, we apply Buffoni-Toland Theorem \ref{thm BT} to globally extend the branches and get the following result.

\begin{theo}\label{thm global E-states}\textbf{(Global bifurcation of E-states)}
	All the bifurcations of Theorem \ref{thm E-states} are global in $H^{s,\sigma}$ for $s>\tfrac{3}{2}$ and $\sigma>0.$ More precisely, 
	\begin{enumerate}[label=(\roman*)]
		\item Let $a<b$ and $\mathbf{m}\in\mathbb{N}^*$. Fix $\kappa\in\{+,-\}.$ Then, there exist two global curves
		$$\mathscr{C}_{\textnormal{\tiny{global}}}^{\kappa,\mathbf{m}}(a,b)\triangleq\Big\{\left(c_{\mathbf{m}}^{\kappa}(\mathtt{s},a,b),\check{r}_{\mathbf{m}}^{\kappa}(\mathtt{s},a,b)\right),\quad\mathtt{s}\in\mathbb{R}\Big\}$$
		corresponding to $\mathbf{m}$-symmetric E-states and extending the local curves $\mathscr{C}_{\textnormal{\tiny{local}}}^{\kappa,\mathbf{m}}(a,b)$ given by Theorem \ref{thm E-states}-(i). Moreover, the curve $\mathscr{C}_{\textnormal{\tiny{global}}}^{\kappa,\mathbf{m}}(a,b)$ admits locally around each of its points a real-analytic reparametrization. In addition, one has the following alternatives
		\begin{itemize}
			\item [$(A1)$] There exist $T_{\mathbf{m}}^{\kappa}(a,b)>0$ such that
			$$\forall\mathtt{s}\in\mathbb{R},\quad c_{\mathbf{m}}^{\kappa}\big(\mathtt{s}+T_{\mathbf{m}}^{\kappa}(a,b),a,b\big)=c_{\mathbf{m}}^{\kappa}(\mathtt{s},a,b)\qquad\textnormal{and}\qquad \check{r}_{\mathbf{m}}^{\kappa}\big(\mathtt{s}+T_{\mathbf{m}}^{\kappa}(a,b),a,b\big)=\check{r}_{\mathbf{m}}^{\kappa}(\mathtt{s},a,b).$$
			\item [$(A2)$] One of the following limits occurs (possibly simultaneously)
			\begin{enumerate}[label=\textbullet]
				\item (Blow-up) $\displaystyle\lim_{\mathtt{s}\to\pm\infty}\frac{1}{1+\left|c_{\mathbf{m}}^{\kappa}(\mathtt{s},a,b)\right|+\|\check{r}_{\mathbf{m}}^{\kappa}(\mathtt{s},a,b)\|_{s,\sigma}}=0.$
				\item (Collision of the boundaries) $\displaystyle\lim_{\mathtt{s}\to\pm\infty}\min_{x\in\mathbb{T}}\left|\big(\check{r}_{\mathbf{m}}^{\kappa}\big)_{+}(\mathtt{s},a,b)(x)-\big(\check{r}_{\mathbf{m}}^{\kappa}\big)_{-}(\mathtt{s},a,b)(x)+b-a\right|=0.$
				\item (Degeneracy $+$) $\displaystyle\lim_{\mathtt{s}\to\pm\infty}\min_{x\in\mathbb{T}}\left|\big(\check{r}_{\mathbf{m}}^{\kappa}\big)_{+}(\mathtt{s},a,b)(x)+b-c_{\mathbf{m}}^{\pm}(\mathtt{s},a,b)\right|=0.$
				\item (Degeneracy $-$) $\displaystyle\lim_{\mathtt{s}\to\pm\infty}\min_{x\in\mathbb{T}}\left|\big(\check{r}_{\mathbf{m}}^{\kappa}\big)_{-}(\mathtt{s},a,b)(x)+a-c_{\mathbf{m}}^{\pm}(\mathtt{s},a,b)\right|=0.$
			\end{enumerate}
		\end{itemize}
	\item Let $(a,c)\in\mathbb{R}^2$,  $\mathbf{m}\in\mathbb{N}^*$ with $a<c$ and $\mathbf{m}\geqslant N_1(c-a)$ (resp. $c<a<c+\tfrac{1}{2\pi}$ and $\mathbf{m}\in\llbracket 1, N_2(a-c)\rrbracket$). Then, there exists a global curve
	$$\mathscr{C}_{\textnormal{\tiny{global}}}^{\mathbf{m}}(a,c)\triangleq\Big\{\big(b_{\mathbf{m}}(\mathtt{s},a,c),\check{r}_{\mathbf{m}}(\mathtt{s},a,c)\big),\quad\mathtt{s}\in\mathbb{R}\Big\}$$
	corresponding to $\mathbf{m}$-symmetric E-states and extending the local curves $\mathscr{C}_{\textnormal{\tiny{local}}}^{\mathbf{m}}(a,c)$ given by Theorem \ref{thm E-states}-(ii). Moreover, the curve $\mathscr{C}_{\textnormal{\tiny{global}}}^{\mathbf{m}}(a,c)$ admits locally around each of its points a real-analytic reparametrization. In addition, one has the following alternatives
	\begin{itemize}
		\item [$(A1)$] There exist $T_{\mathbf{m}}(a,c)>0$ such that
		$$\forall\mathtt{s}\in\mathbb{R},\quad b_{\mathbf{m}}\big(\mathtt{s}+T_{\mathbf{m}}(a,c),a,c\big)=b_{\mathbf{m}}(\mathtt{s},a,c)\qquad\textnormal{and}\qquad \check{r}_{\mathbf{m}}\big(\mathtt{s}+T_{\mathbf{m}}(a,c),a,c\big)=\check{r}_{\mathbf{m}}(\mathtt{s},a,c).$$
		\item [$(A2)$] One of the following limits occurs (possibly simultaneously)
		\begin{enumerate}[label=\textbullet]
			\item (Blow-up) $\displaystyle\lim_{\mathtt{s}\to\pm\infty}\frac{1}{1+\left|b_{\mathbf{m}}(\mathtt{s},a,c)\right|+\|\check{r}_{\mathbf{m}}(\mathtt{s},a,c)\|_{s,\sigma}}=0.$
			\item (Collision of the boundaries) $\displaystyle\lim_{\mathtt{s}\to\pm\infty}\min_{x\in\mathbb{T}}\left|\big(\check{r}_{\mathbf{m}}\big)_{+}(\mathtt{s},a,c)(x)-\big(\check{r}_{\mathbf{m}}\big)_{-}(\mathtt{s},a,c)(x)+b_{\mathbf{m}}(\mathtt{s},a,c)-a\right|=~0.$
			\item (Degeneracy $+$) $\displaystyle\lim_{\mathtt{s}\to\pm\infty}\min_{x\in\mathbb{T}}\left|\big(\check{r}_{\mathbf{m}}\big)_{+}(\mathtt{s},a,c)(x)+b_{\mathbf{m}}(\mathtt{s},a,c)-c\right|=0.$
			\item (Degeneracy $-$) $\displaystyle\lim_{\mathtt{s}\to\pm\infty}\min_{x\in\mathbb{T}}\left|\big(\check{r}_{\mathbf{m}}\big)_{-}(\mathtt{s},a,c)(x)+a-c\right|=0.$
		\end{enumerate}
	\end{itemize}
	\item Let $(b,c)\in\mathbb{R}^2$,  $\mathbf{m}\in\mathbb{N}^*$ with $b>c$ and $\mathbf{m}\geqslant N_1(b-c)$ (resp. $c-\tfrac{1}{2\pi}<b<c$ and $\mathbf{m}\in\llbracket 1, N_2(c-b)\rrbracket$). Then, there exists a global curve
$$\mathscr{C}_{\textnormal{\tiny{global}}}^{\mathbf{m}}(b,c)\triangleq\Big\{\big(a_{\mathbf{m}}(\mathtt{s},b,c),\check{r}_{\mathbf{m}}(\mathtt{s},b,c)\big),\quad\mathtt{s}\in\mathbb{R}\Big\}$$
corresponding to $\mathbf{m}$-symmetric E-states and extending the local curves $\mathscr{C}_{\textnormal{\tiny{local}}}^{\mathbf{m}}(b,c)$ given by Theorem \ref{thm E-states}-(iii). Moreover, the curve $\mathscr{C}_{\textnormal{\tiny{global}}}^{\mathbf{m}}(b,c)$ admits locally around each of its points a real-analytic reparametrization. In addition, one has the following alternatives
\begin{itemize}
	\item [$(A1)$] There exist $T_{\mathbf{m}}(b,c)>0$ such that
	$$\forall\mathtt{s}\in\mathbb{R},\quad a_{\mathbf{m}}\big(\mathtt{s}+T_{\mathbf{m}}(b,c),b,c\big)=a_{\mathbf{m}}(\mathtt{s},b,c)\qquad\textnormal{and}\qquad \check{r}_{\mathbf{m}}\big(\mathtt{s}+T_{\mathbf{m}}(b,c),b,c\big)=\check{r}_{\mathbf{m}}(\mathtt{s},b,c).$$
	\item [$(A2)$] One of the following limits occurs (possibly simultaneously)
	\begin{enumerate}[label=\textbullet]
		\item (Blow-up) $\displaystyle\lim_{\mathtt{s}\to\pm\infty}\frac{1}{1+\left|a_{\mathbf{m}}(\mathtt{s},b,c)\right|+\|\check{r}_{\mathbf{m}}(\mathtt{s},b,c)\|_{s,\sigma}}=0.$
		\item (Collision of the boundaries) $\displaystyle\lim_{\mathtt{s}\to\pm\infty}\min_{x\in\mathbb{T}}\left|\big(\check{r}_{\mathbf{m}}\big)_{+}(\mathtt{s},b,c)(x)-\big(\check{r}_{\mathbf{m}}\big)_{-}(\mathtt{s},b,c)(x)+b-a_{\mathbf{m}}(\mathtt{s},b,c)\right|=~0.$
		\item (Degeneracy $+$) $\displaystyle\lim_{\mathtt{s}\to\pm\infty}\min_{x\in\mathbb{T}}\left|\big(\check{r}_{\mathbf{m}}\big)_{+}(\mathtt{s},b,c)(x)+b-c\right|=0.$
		\item (Degeneracy $-$) $\displaystyle\lim_{\mathtt{s}\to\pm\infty}\min_{x\in\mathbb{T}}\left|\big(\check{r}_{\mathbf{m}}\big)_{-}(\mathtt{s},b,c)(x)+a_{\mathbf{m}}(\mathtt{s},b,c)-c\right|=0.$
	\end{enumerate}
\end{itemize}
\item Let $c\in\mathbb{R}^*$ and  $\mathbf{m}\in\mathbb{N}^*$ with $\mathbf{m}\geqslant N_1(c)$. Then, there exists a global curve
$$\mathscr{C}_{\textnormal{\tiny{global}}}^{\mathbf{m}}(c)\triangleq\Big\{\big(a_{\mathbf{m}}(\mathtt{s},c),\check{r}_{\mathbf{m}}(\mathtt{s},c)\big),\quad\mathtt{s}\in\mathbb{R}\Big\}$$
corresponding to $\mathbf{m}$-symmetric E-states and extending the local curves $\mathscr{C}_{\textnormal{\tiny{local}}}^{\mathbf{m}}(c)$ given by Theorem \ref{thm E-states}-(iv). Moreover, the curve $\mathscr{C}_{\textnormal{\tiny{global}}}^{\mathbf{m}}(c)$ admits locally around each of its points a real-analytic reparametrization. In addition, one has the following alternatives
\begin{itemize}
	\item [$(A1)$] There exist $T_{\mathbf{m}}(c)>0$ such that
	$$\forall\mathtt{s}\in\mathbb{R},\quad a_{\mathbf{m}}\big(\mathtt{s}+T_{\mathbf{m}}(c),c\big)=a_{\mathbf{m}}(\mathtt{s},c)\qquad\textnormal{and}\qquad \check{r}_{\mathbf{m}}\big(\mathtt{s}+T_{\mathbf{m}}(c),c\big)=\check{r}_{\mathbf{m}}(\mathtt{s},c).$$
	\item [$(A2)$] One of the following limits occurs (possibly simultaneously)
	\begin{enumerate}[label=\textbullet]
		\item (Blow-up) $\displaystyle\lim_{\mathtt{s}\to\pm\infty}\frac{1}{1+a_{\mathbf{m}}(\mathtt{s},c)+\|\check{r}_{\mathbf{m}}(\mathtt{s},c)\|_{s,\sigma}}=0.$
		\item (Collision of the boundaries) $\displaystyle\lim_{\mathtt{s}\to\pm\infty}\min_{x\in\mathbb{T}}\left|\big(\check{r}_{\mathbf{m}}\big)_{+}(\mathtt{s},c)(x)-\big(\check{r}_{\mathbf{m}}\big)_{-}(\mathtt{s},c)(x)+2a_{\mathbf{m}}(\mathtt{s},c)\right|=0.$
		\item (Degeneracy $+$) $\displaystyle\lim_{\mathtt{s}\to\pm\infty}\min_{x\in\mathbb{T}}\left|\big(\check{r}_{\mathbf{m}}\big)_{+}(\mathtt{s},c)(x)+a_{\mathbf{m}}(\mathtt{s},c)-c\right|=0.$
		\item (Degeneracy $-$) $\displaystyle\lim_{\mathtt{s}\to\pm\infty}\min_{x\in\mathbb{T}}\left|\big(\check{r}_{\mathbf{m}}\big)_{-}(\mathtt{s},c)(x)-a_{\mathbf{m}}(\mathtt{s},c)-c\right|=0.$
	\end{enumerate}
\end{itemize}
	\end{enumerate}
\end{theo}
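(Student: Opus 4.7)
My plan is to realize each of the four families of E-states as the zero set of a single real-analytic nonlinear functional and to apply Buffoni-Toland's analytic global bifurcation theorem (Theorem \ref{thm BT}) to continue the Crandall-Rabinowitz branches produced in Theorem \ref{thm E-states}. Inserting the travelling ansatz $r_{\pm}(t,x) = \check{r}_{\pm}(x-ct)$ into the system \eqref{system rpm} and working with $\check{r} = (\check{r}_{+}, \check{r}_{-})$ in the mean-zero, $\mathbf{m}$-symmetric closed subspace of $H^{s,\sigma}$ for $s > \tfrac{3}{2}$ and $\sigma > 0$, I obtain the functional equation
\begin{equation*}
F(\lambda, \check{r})(x) \triangleq \begin{pmatrix}
\big(\check{r}_{+}(x) + b - c\big)\,\check{r}_{+}^{\prime}(x) - \tfrac{1}{b-a}\partial_{x}^{-1}\check{r}_{+}(x) + \tfrac{1}{b-a}\partial_{x}^{-1}\check{r}_{-}(x)\\
\big(\check{r}_{-}(x) + a - c\big)\,\check{r}_{-}^{\prime}(x) - \tfrac{1}{b-a}\partial_{x}^{-1}\check{r}_{+}(x) + \tfrac{1}{b-a}\partial_{x}^{-1}\check{r}_{-}(x)
\end{pmatrix} = 0,
\end{equation*}
where $\lambda$ denotes the chosen bifurcation parameter ($\lambda = c$ in case (i), $\lambda = b$ in (ii), $\lambda = a$ in (iii), and $\lambda = a$ with the symmetry $b = -a$ in (iv)), and $(a,b,c)$ the remaining fixed data. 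The natural open domain $\mathcal{O} \subset \mathbb{R} \times H^{s,\sigma}$ is defined by the three non-degeneracy inequalities that feature in $(A2)$: $\min_{x}(\check{r}_{+} - \check{r}_{-} + b - a) > 0$ (no collision), $\min_{x}|\check{r}_{+} + b - c| > 0$, and $\min_{x}|\check{r}_{-} + a - c| > 0$ (non-degeneracy of the transport coefficients). The condition $s > \tfrac{3}{2}$ guarantees that $H^{s,\sigma}$ is a Banach algebra continuously embedded in $C^{1}$, so that $\mathcal{O}$ is open and the quasilinear coefficients are well defined pointwise.

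I then verify the three hypotheses of Buffoni-Toland. Real-analyticity of $F : \mathcal{O} \to H^{s-1,\sigma}$ is immediate, since the nonlinearity is a cubic polynomial in $(\check{r}, \check{r}^{\prime})$ and $\partial_{x}^{-1}$ is a bounded Fourier multiplier. For Fredholmness, the linearization has principal part $\textnormal{diag}\big((\check{r}_{+} + b - c)\partial_{x}, (\check{r}_{-} + a - c)\partial_{x}\big)$, which on the mean-zero $\mathbf{m}$-symmetric subspace is a bounded isomorphism $H^{s,\sigma} \to H^{s-1,\sigma}$ (invert by dividing by the nonvanishing coefficient and integrating), while the integral coupling is compact from $H^{s-1,\sigma}$ to itself; the derivative with respect to $\lambda$ supplies the extra finite-dimensional direction that makes the index zero. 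For properness on closed bounded subsets of $\mathcal{O}$, solving the zero equation for $\check{r}_{\pm}^{\prime}$ algebraically in terms of lower-order quantities gains one full derivative, and the Banach-algebra property of $H^{s,\sigma}$ then yields strong precompactness of any bounded sequence of zeros.

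With these ingredients secured I apply Buffoni-Toland to each local branch of Theorem \ref{thm E-states} to obtain the announced globally defined continuous curve $\mathtt{s} \mapsto (\lambda(\mathtt{s}), \check{r}(\mathtt{s}))$, which is contained in $\mathcal{O}$ and admits local real-analytic reparametrizations. The general theorem delivers the trichotomy: the curve is either a closed loop, yielding the periodic alternative $(A1)$; or it exits every closed bounded subset of $\mathcal{O}$, which, by unpacking the four defining inequalities, is precisely the disjunction of blow-up, collision, and degeneracy $\pm$ stated in $(A2)$; or it returns to the trivial line at a second bifurcation point. I rule out this last possibility by combining the explicit form of the dispersion relation computed in Theorem \ref{thm E-states} with the simplicity of the kernel driving the Crandall-Rabinowitz construction (a single Fourier mode of index $\mathbf{m}$): a secondary crossing would force the bifurcation parameter to take the same value as at the starting point, which together with the Crandall-Rabinowitz transversality would contradict analytic continuation.

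The main obstacle will lie in the rigorous verification of Fredholmness and properness of the quasilinear operator $F$, since the principal part carries coefficients that depend on the unknown and formally loses one derivative. The inversion of the diagonal transport operator hinges on working in the mean-zero $\mathbf{m}$-symmetric subspace (so that $\partial_{x}$ is invertible on the relevant Fourier tail) and on the non-degeneracy inequalities cutting out $\mathcal{O}$. The four sub-cases (i)-(iv) are structurally identical up to the choice of bifurcation parameter $\lambda$, so the verification applies verbatim to each of them, the constraints $\mathbf{m} \geqslant N_{1}(\cdot)$ and $\mathbf{m} \leqslant N_{2}(\cdot)$ of Theorem \ref{thm E-states} only ensuring the existence of the starting local bifurcation point from which the global continuation proceeds.
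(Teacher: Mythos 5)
Your proposal follows essentially the same route as the paper's proof: the same open set cut out by the three non-degeneracy conditions (no collision, non-vanishing transport coefficients), the same decomposition of the linearization at a solution into a diagonal transport isomorphism plus a compact perturbation (the multiplication by $\partial_x\check{r}_{\pm}$ together with the nonlocal coupling), the same derivative-gain/Cauchy-sequence argument for compactness of the solution sets $\mathscr{S}_n$, and the same application of Theorem \ref{thm BT}. Two caveats. First, the version of Buffoni--Toland invoked here yields only the dichotomy $(A1)$/$(A2)$; there is no third alternative of "returning to the trivial line", so your paragraph excluding it is unnecessary, and the argument you give for it is not convincing anyway: in the $\mathbf{m}$-symmetric subspace the trivial line carries infinitely many bifurcation points (e.g.\ $c_{j\mathbf{m}}^{\pm}(a,b)$ for all $j\geqslant1$), so a hypothetical secondary crossing would not be forced to occur at the starting parameter value. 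Relatedly, the index-zero property comes from the compact-perturbation-of-an-isomorphism structure alone, not from the $\lambda$-derivative "supplying an extra direction". Second, in items (ii)--(iv) the bifurcation parameter ranges over a half-line or a bounded interval rather than all of $\mathbb{R}$, so the exhausting closed sets must keep the parameter away from the boundary (e.g.\ $b\in[a+\tfrac{1}{n},a+n]$); this produces an additional exit alternative ($b_{\mathbf{m}}(\mathtt{s},a,c)\to a$, say), which the paper absorbs into the collision alternative. So the four cases are not quite "verbatim identical" and this boundary effect should be addressed explicitly.
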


\begin{remark} We first make the following remarks concerning Theorem \ref{thm E-states}.
	\begin{enumerate}
		\item The "hyperbolic" structure of the bifurcation diagram (Figure \ref{fig hyperbolic}) is in contrast with the Eulerian case for doubly-connected patches \cite{HHMV16,HR17} where we have an "elliptic" situation.
		\item The fact of having a finite number of bifurcation points in the asymmetric case is also very interesting. This rarely happens in the fluid patch class, see \cite[Thm. 1.1-(i)]{GHM23}.
		\item A precise expression of $c_{\mathbf{m},2}^{\pm}(a,b)$, $b_{\mathbf{m},2}(a,c)$, $a_{\mathbf{m},2}(b,c)$ and $a_{\mathbf{m},2}(c)$ will be given along Section \ref{sec local}. More generally, one can obtain a general expansion of the solutions by an algorithmic procedure inserting an a priori unknown expansion into the equations and solving the resulting system order by order with respect to the parametrization parameter $\mathtt{s}.$
		\item For the items (ii) and (iii), the case $c=0$ (corresponding to stationary solutions) can be reached for suitable ranges of $a$ or $b$. This makes echo to \cite{GS19}.
	\end{enumerate}
Now, let us discuss the conclusions of Theorem \ref{thm global E-states}.
\begin{enumerate}
	\item Conversely to \cite{HR17} and similarly to \cite{HMW20}, we expect that the loop alternative $(A1)$ does not occur. This may require a more refined analysis introducing suitable nodal conditions and reformulating the study as a Riemann-Hilbert problem.
	\item As we shall see in Section \ref{sec quali}, for a true solution of the system \eqref{system rpm per} (corresponding to E-states), the "Degeneracy $\pm$" alternative happens for a critical point of $\check{r}_{\mp}.$ Hence, for the limiting E-states (end of the branch), we expect the formation of corners (as conjectured in the case of V-states).
\end{enumerate}
\end{remark}
	\noindent\textbf{Acknoledgment :} This work has been supported by PRIN 2020XB3EFL, "Hamiltonain and Dispersive PDEs". I would like to thank Frédéric Rousset who mentioned the notion of patches of electrons (that i didn't know at that time) during a talk in a conference in Lyon. His remark led me to think about this work and upcoming ones. Thank you also to Slim Ibrahim for providing me the manuscript \cite{D87} and to Alberto Maspero for pointing the valuable reference \cite{LS22}. Finally, I thank Massimiliano Berti and Taoufik Hmidi for stimulating discussions when writing this document. 
	\section{Local construction}\label{sec local}
	This section is devoted to the local construction of the branches. For that purpose, we reformulate the problem by looking for the zeros of a nonlinear time independant functional. The flat strips correspond to a trivial line of roots when only one of the free parameters varies. Then, we implement in a systematic way the Crandall-Rabinowitz Theorem \ref{thm CR+S} by looking at the kernel, range and transversality conditions. We also investigate the pitchfork bifurcation property of the constructed branches by computing the condition given by Shi in \cite{S99}, see also Theorem \ref{thm CR+S}.\\
	
	We shall look for solutions of \eqref{system rpm} in the form
	$$r_{\pm}(t,x)=\check{r}_{\pm}(x-ct), \qquad c\in\mathbb{R},\qquad\check{r}_{\pm}\in L^{2}(\mathbb{T}).$$
	With this ansatz, the system \eqref{system rpm} becomes
	\begin{equation}\label{system rpm per}
		\begin{array}{l}
			\forall x\in\mathbb{T},\quad F(a,b,c,\check{r}_+,\check{r}_-)(x)=0,\qquad F\triangleq(F_+\,,\,F_-),\vspace{0.2cm}\\
			F_+(a,b,c,\check{r}_+,\check{r}_-)(x)\triangleq \big(\check{r}_{+}(x)+b-c\big)\partial_{x}\check{r}_{+}(x)-\tfrac{1}{b-a}\partial_{x}^{-1}\check{r}_{+}(x)+\tfrac{1}{b-a}\partial_{x}^{-1}\check{r}_{-}(x),\vspace{0.2cm}\\
			F_-(a,b,c,\check{r}_+,\check{r}_-)(x)\triangleq \big(\check{r}_{-}(x)+a-c\big)\partial_{x}\check{r}_{-}(x)-\tfrac{1}{b-a}\partial_{x}^{-1}\check{r}_{+}(x)+\tfrac{1}{b-a}\partial_{x}^{-1}\check{r}_{-}(x).
		\end{array}
	\end{equation}
	One readily has
	\begin{equation}\label{trivial line a-c}
		\forall a<b,\quad\forall c\in\mathbb{R},\quad F(a,b,c,0,0)=0.
	\end{equation}
	This identity provides, either for fixed couple $(a,b),$ $(a,c)$ or $(b,c)$ a line of trivial solutions corresponding to the flat strip(s), see Lemma \ref{lem trivial solutions flat strip}. We shall find non-trivial solutions of \eqref{system rpm per} by implementing the local bifurcation theory through the use of Crandall-Rabinowitz Theorem \ref{thm CR+S}. We shall work with the following Sobolev-analytic function spaces defined for any $s,\sigma\geqslant0$ and $\mathbf{m}\in\mathbb{N}^*$ by
	\begin{align*}
		X_{\mathbf{m}}^{s,\sigma}&\triangleq\Big\{f=(f_+,f_-),\qquad\forall x\in\mathbb{T},\quad f_{\pm}(x)=\sum_{j=1}^{\infty}f_{j}^{\pm}\cos(2\pi j\mathbf{m}x),\quad f_{j}^{\pm}\in\mathbb{R},\quad\sum_{j=1}^{\infty}|2\pi j|^{2s}|f_{j}^{\pm}|^2e^{4\pi\sigma|j|}<\infty\Big\},\\
		Y_{\mathbf{m}}^{s,\sigma}&\triangleq\Big\{g=(g_+,g_-),\qquad \forall x\in\mathbb{T},\quad g_{\pm}(x)=\sum_{j=1}^{\infty}g_{j}^{\pm}\sin(2\pi j\mathbf{m}x),\quad g_{j}^{\pm}\in\mathbb{R},\quad\sum_{j=1}^{\infty}|2\pi j|^{2s}|g_{j}^{\pm}|^2e^{4\pi\sigma|j|}<\infty\Big\}.
	\end{align*}
	Both spaces are endowed with the norm 
	$$\|(u_+,u_-)\|_{s,\sigma}\triangleq\|u_+\|_{s,\sigma}+\|u_-\|_{s,\sigma},\qquad\|u_{\pm}\|_{s,\sigma}\triangleq\left(\sum_{j=1}^{\infty}(2\pi j)^{2s}(u_{j}^{\pm})^2e^{4\pi\sigma j}\right)^{\frac{1}{2}}.$$
	The average is taken equal to zero in accordance with \eqref{zero average}. We refer the reader for instance to \cite{ABZ22} for a nice introduction to the general Sobolev-analytic spaces and there properties. We mention that the Sobolev-analytic scale $(H^{s,\sigma})_{s\geqslant 0,\sigma\geqslant0}$ behaves like the classical Sobolev scale $(H^s)_{s\geqslant0}$ and admits the same properties with respect to the Sobolev regularity parameter $s$ (interpolation, product and composition laws, compact embeddings etc...). One can easily check from the structure \eqref{system rpm per}, using in particular the classical formula
	\begin{equation}\label{sicosisi}
		\forall(u,v)\in\mathbb{R}^2,\quad\sin(u)\cos(v)=\tfrac{1}{2}\big(\sin(u+v)+\sin(u-v)\big),
	\end{equation}
	that, denoting $\mathbb{S}\triangleq\{(x,y)\in\mathbb{R}^2\quad\textnormal{s.t.}\quad x<y\}$,
	\begin{equation}\label{regularity F}
		\textnormal{for any }\sigma>0\textnormal{ and }s\geqslant 1,\textnormal{ the function }F:\mathbb{S}\times\mathbb{R}\times X_{\mathbf{m}}^{s,\sigma}\rightarrow Y_{\mathbf{m}}^{s-1,\sigma}\textnormal{ is well-defined and analytic.}
	\end{equation}
	Indeed, the analyticity results for that \eqref{system rpm per} only involves linear or quadratic terms. From now on, we fix $s\geqslant 1$ and $\sigma>0$. The linearized operator at $(\check{r}_{+},\check{r}_{-})=(0,0)$ in the direction $(h_+,h_-)$ is
	\begin{align}\label{lin op}
		d_{(\check{r}_{+},\check{r}_{-})}F(a,b,c,0,0)[h_+,h_-]=I_{(0,0)}[h_+,h_-]+K_{(0,0)}[h_+,h_-],
	\end{align}
	with
	\begin{align}\label{lin op 2}
		I_{(0,0)}\triangleq\begin{pmatrix}
			(b-c)\partial_{x} & 0\\
			0 & (a-c)\partial_{x}
		\end{pmatrix},\qquad
		K_{(0,0)}\triangleq\tfrac{1}{b-a}\begin{pmatrix}
			-\partial_{x}^{-1} & \partial_{x}^{-1}\\
			-\partial_{x}^{-1} & \partial_{x}^{-1}
		\end{pmatrix}.
	\end{align}
	If $c\not\in\{a,b\},$ then $I_{(0,0)}:X_{\mathbf{m}}^{s,\sigma}\rightarrow Y_{\mathbf{m}}^{s-1,\sigma}$ is an isomorphism. In addition, $K_{(0,0)}:X_{\mathbf{m}}^{s,\sigma}\rightarrow Y_{\mathbf{m}}^{s+1,\sigma}$ is continuous and by Rellich-type Theorem, we deduce that $K_{(0,0)}:X_{\mathbf{m}}^{s,\sigma}\rightarrow Y_{\mathbf{m}}^{s-1,\sigma}$ is a compact operator. Therefore, 
	\begin{equation}\label{Fredholmness}
		\textnormal{for }c\not\in\{a,b\},\,\sigma>0\textnormal{ and }s\geqslant 1,\,\, d_{(\check{r}_{+},\check{r}_{-})}F(a,b,c,0,0):X_{\mathbf{m}}^{s,\sigma}\rightarrow Y_{\mathbf{m}}^{s-1,\sigma}\textnormal{ is a zero index Fredholm operator.}
	\end{equation}
	Moreover, this latter admits the following Fourier representation : for given $(h_+,h_-)\in X_{\mathbf{m}}^{s,\sigma}$ in the form
	$$h_{\pm}(x)=\sum_{j=1}^{\infty}h_{j}^{\pm}\cos(2\pi j\mathbf{m}x),\qquad h_{j}^{\pm}\in\mathbb{R},$$
	we have
	\begin{equation}\label{linear per}
		d_{(\check{r}_{+},\check{r}_{-})}F(a,b,c,0,0)[h_+,h_-](x)=-\sum_{j=1}^{\infty}M_{j\mathbf{m}}(a,b,c)\begin{pmatrix}
			h_{j}^{+}\\
			h_{j}^{-}
		\end{pmatrix}\sin(2\pi j\mathbf{m}x),
	\end{equation}
	where
	\begin{equation}\label{def Mj per}
		M_{j}(a,b,c)\triangleq \begin{pmatrix}
			2\pi j(b-c)+\tfrac{1}{2\pi j(b-a)} & -\tfrac{1}{2\pi j(b-a)}\\
			\tfrac{1}{2\pi j(b-a)} & 2\pi j(a-c)-\tfrac{1}{2\pi j(b-a)}
		\end{pmatrix}.
	\end{equation}
	In order to apply Crandall-Rabinowitz Theorem, we shall look for the singularity of the matrices $M_{j}(a,b,c)$ giving rise in turn to a non-trivial kernel for the linearized operator. For any $j\in\mathbb{N}^*,$ the determinant of $M_{j}(a,b,c)$ is
	\begin{equation}\label{determinant Mj per}
		\Delta_{j}(a,b,c)\triangleq\det\big(M_{j}(a,b,c)\big)=4\pi^{2}j^{2}(b-c)(a-c)-1.
	\end{equation}
	We shall now dissociate the analysis whenever two of the three parameters $a,$ $b$ or $c$ are fixed, which leads to the following subsections. We also discuss the symmetric case $(a,b)=(-a,a),$ with $a>0$ which must be treated separately.
	\subsection{Velocity bifurcation}\label{sec vel bif}
	In this subsection, we fix $a<b$ and study the bifurcation with respect to the parameter $c\in\mathbb{R}.$ First, we can rewrite the determinant $\Delta_{j}(a,b,c)$ in \eqref{determinant Mj per} as a polynomial of degree two in the variable $c$ as follows
	\begin{equation}\label{det reec}
		\Delta_{j}(a,b,c)=4\pi^2j^2c^2-4\pi^2j^2(a+b)c+4\pi^2j^2ab-1.
	\end{equation}
	The associated discriminant is
	\begin{align*}
		\delta_{j}(a,b)&\triangleq 16\pi^4j^4(a+b)^2+16\pi^2j^2(1-4\pi^2j^2ab)\\
		&=16\pi^2j^2\big(\pi^2j^2(b-a)^2+1\big)>0.
	\end{align*}
	We deduce that
	\begin{equation}\label{def cj}
		\Delta_{j}(a,b,c)=0\quad\Leftrightarrow\quad c=c_j^{\pm}(a,b)\triangleq\frac{a+b}{2}\pm\sqrt{\frac{\pi^2j^2(b-a)^2+1}{4\pi^2j^2}}\cdot
	\end{equation}
	$\blacktriangleright$ \textbf{One dimensional kernel condition :} The sequence $\big(c_j^{+}(a,b)\big)_{j\in\mathbb{N}^*}$ is decreasing and tends to $b$ when $j\to\infty$ whereas the sequence $\big(c_j^{-}(a,b)\big)_{j\in\mathbb{N}^*}$ is increasing and tends to $a$ when $j\to\infty$. As a consequence, for any fixed $\mathbf{m}\in\mathbb{N}^*,$
	$$\Delta_{\mathbf{m}}\big(a,b,c_{\mathbf{m}}^{\pm}(a,b)\big)=0\qquad\textnormal{and}\qquad\forall j\in\mathbb{N}\setminus\{0,1\},\quad\Delta_{j\mathbf{m}}\big(a,b,c_{\mathbf{m}}^{\pm}(a,b)\big)\neq 0.$$
	Actually, from \eqref{determinant Mj per}, we deduce
\begin{align}\label{sign det1}
	\forall j\in\mathbb{N}\setminus\{0,1\},\quad\Delta_{j\mathbf{m}}\big(a,b,c_{\mathbf{m}}^{\pm}(a,b)\big)&=4\pi^2\mathbf{m}^2j^2\big(b-c_{\mathbf{m}}^{\pm}(a,b)\big)\big(a-c_{\mathbf{m}}^{\pm}(a,b)\big)-1\nonumber\\
	&=j^2-1>0.
\end{align}
	Thus, the kernel of $d_{(\check{r}_{+},\check{r}_{-})}F\big(a,b,c_{\mathbf{m}}^{\pm}(a,b),0,0\big)$ is one dimensional, more precisely
	\begin{equation}\label{ker-a}
		\begin{array}{l}
			\ker\Big(d_{(\check{r}_{+},\check{r}_{-})}F\big(a,b,c_{\mathbf{m}}^{\pm}(a,b),0,0\big)\Big)=\mathtt{span}\big( \check{r}_{0,\mathbf{m},a,b}^{\pm}\big),\vspace{0.2cm}\\
			\check{r}_{0,\mathbf{m},a,b}^{\pm}(x)\triangleq\begin{pmatrix}
				2\pi\mathbf{m}\big(a-c_{\mathbf{m}}^{\pm}(a,b)\big)-\tfrac{1}{2\pi\mathbf{m}(b-a)}\\
				-\tfrac{1}{2\pi\mathbf{m}(b-a)}
			\end{pmatrix}\cos(2\pi \mathbf{m}x).
		\end{array}
	\end{equation}
	$\blacktriangleright$ \textbf{Range condition :} Notice that the monotonicity and the convergence of the sequences $\big(c_{j}^{\pm}(a,b)\big)_{j\in\mathbb{N}^*}$ give that $c_{\mathbf{m}}^{\pm}(a,b)\not\in\{a,b\}.$ So the Fredholmness property \eqref{Fredholmness} together with the previous point imply that the range $\mathcal{R}\Big(d_{(\check{r}_{+},\check{r}_{-})}F\big(a,b,c_{\mathbf{m}}^{\pm}(a,b),0,0\big)\Big)$ is closed and of codimension one in $Y_{\mathbf{m}}^{s-1,\sigma}.$ To check, later on, the transversality, we shall prove that
	\begin{equation}\label{range-a}
		\begin{array}{l}
			\mathcal{R}\Big(d_{(\check{r}_{+},\check{r}_{-})}F\big(a,b,c_{\mathbf{m}}^{\pm}(a,b),0,0\big)\Big)=V^{\perp},\vspace{0.2cm}\\
			V\triangleq\mathtt{span}\big(g_{0,\mathbf{m},a,b}^{\pm}\big),\qquad g_{0,\mathbf{m},a,b}^{\pm}(x)\triangleq\begin{pmatrix}
				2\pi\mathbf{m}\big(a-c_{\mathbf{m}}^{\pm}(a,b)\big)-\tfrac{1}{2\pi\mathbf{m}(b-a)}\\
				\tfrac{1}{2\pi\mathbf{m}(b-a)}
			\end{pmatrix}\sin(2\pi\mathbf{m}x),
		\end{array}
	\end{equation}
	where the orthogonal is understood in the sense of the scalar product defined on $Y_\mathbf{m}^{s-1,\sigma}$ as follows: for any $g=(g_+,g_-)\in Y_{\mathbf{m}}^{s-1,\sigma}$ and $\widetilde{g}=(\widetilde{g}_+,\widetilde{g}_-)\in Y_{\mathbf{m}}^{s-1,\sigma}$ writing
	\begin{equation}\label{form g}
		g_{\pm}(x)=\sum_{j=1}^{\infty}g_{j}^{\pm}\sin(2\pi j\mathbf{m}x),\qquad \widetilde{g}_{\pm}(x)=\sum_{j=1}^{\infty}\widetilde{g}_{j}^{\,\pm}\sin(2\pi j\mathbf{m}x),\qquad g_j^{\pm},\widetilde{g}_j^{\pm}\in\mathbb{R},
	\end{equation}
	the scalar product $\langle g,\widetilde{g}\rangle$ of $g$ and $\widetilde{g}$ is given by
	\begin{equation}\label{scalar product per}
		\big\langle g,\widetilde{g}\big\rangle\triangleq\int_{0}^{1}\big(g_+(x)\widetilde{g}_+(x)+g_-(x)\widetilde{g}_-(x)\big)dx=\tfrac{1}{2}\sum_{j=1}^{\infty}\big(g_{j}^{+}\,\widetilde{g}_{j}^{\,+}+g_{j}^{-}\,\widetilde{g}_{j}^{\,-}\big)=\tfrac{1}{2}\sum_{j=1}^{\infty}\left\langle \begin{pmatrix}
			g_{j}^{+}\\
			g_{j}^{-}
		\end{pmatrix}\,,\,\begin{pmatrix}
			\widetilde{g}_{j}^{\,+}\\
			\widetilde{g}_{j}^{\,-}
		\end{pmatrix}\right\rangle_{\mathbb{R}^2}.
	\end{equation}
	Let us now prove \eqref{range-a}. First remark that, by construction, $V$ is a subspace of $Y_{\mathbf{m}}^{s-1,\sigma}$ of dimension one. Therefore, we can apply the orthogonal supplementary theorem in the pre-Hilbertian vector space $\left(Y_{\mathbf{m}}^{s-1,\sigma},\langle\cdot,\cdot\rangle\right)$ to get
	$$Y_{\mathbf{m}}^{s-1,\sigma}=V\overset{\perp}{\oplus}V^{\perp}.$$
	This proves that $V^{\perp}$ is a subspace of $Y_{\mathbf{m}}^{s-1,\sigma}$ of codimension one. Since the range is also of codimension one in $Y_{\mathbf{m}}^{s-1,\sigma}$, it suffices to prove one inclusion is order to get the equality \eqref{range-a}. For
	$$g(x)=\sum_{j=1}^{\infty}M_{j\mathbf{m}}\big(a,b,c_{\mathbf{m}}^{\pm}(a,b)\big)\begin{pmatrix}
		h_{j}^{+}\\
		h_{j}^{-}
	\end{pmatrix}\sin(2\pi j\mathbf{m}x)\in\mathcal{R}\Big(d_{(\check{r}_{+},\check{r}_{-})}F\big(a,b,c_{\mathbf{m}}^{\pm}(a,b),0,0\big)\Big),$$
	we have
	\begin{align*}
		\left\langle g,g_{0,\mathbf{m},a,b}^{\pm}\right\rangle&=\tfrac{1}{2}\left\langle M_{\mathbf{m}}\big(a,b,c_{\mathbf{m}}^{\pm}(a,b)\big)\begin{pmatrix}
			h_{j}^{+}\\
			h_{j}^{-}
		\end{pmatrix},\begin{pmatrix}
			2\pi\mathbf{m}\big(a-c_{\mathbf{m}}^{\pm}(a,b)\big)-\tfrac{1}{2\pi\mathbf{m}(b-a)}\\
			\tfrac{1}{2\pi\mathbf{m}(b-a)}
		\end{pmatrix}\right\rangle_{\mathbb{R}^{2}}\\
		&=\tfrac{1}{2}\left\langle \begin{pmatrix}
			h_{j}^{+}\\
			h_{j}^{-}
		\end{pmatrix},M_{\mathbf{m}}^{\top}\big(a,b,c_{\mathbf{m}}^{\pm}(a,b)\big)\begin{pmatrix}
			2\pi\mathbf{m}\big(a-c_{\mathbf{m}}^{\pm}(a,b)\big)-\tfrac{1}{2\pi\mathbf{m}(b-a)}\\
			\tfrac{1}{2\pi\mathbf{m}(b-a)}
		\end{pmatrix}\right\rangle_{\mathbb{R}^{2}}\\
		&=0.
	\end{align*}
	Here and in the sequel, the notation $M^{\top}$ refers to the transpose of the matrix $M.$ The last equality is true because by construction
	$$\begin{pmatrix}
		2\pi\mathbf{m}\big(a-c_{\mathbf{m}}^{\pm}(a,b)\big)-\tfrac{1}{2\pi\mathbf{m}(b-a)}\\
		\tfrac{1}{2\pi\mathbf{m}(b-a)}
	\end{pmatrix}\in\ker\Big(M_{\mathbf{m}}^{\top}\big(a,b,c_{\mathbf{m}}^{\pm}(a,b)\big)\Big).$$
	This proves 
	$$\mathcal{R}\Big(d_{(\check{r}_{+},\check{r}_{-})}F\big(a,b,c_{\mathbf{m}}^{\pm}(a,b),0,0\big)\Big)\subset V^{\perp}.$$
	Hence, by applying Lemma \ref{lem codim1}, we conclude \eqref{range-a}. Now, still for later puposes, we shall also show that
	\begin{equation}\label{range-ker-a}
		\mathcal{R}\Big(d_{(\check{r}_{+},\check{r}_{-})}F\big(a,b,c_{\mathbf{m}}^{\pm}(a,b),0,0\big)\Big)=\ker(l),
	\end{equation}
	with $l\in\left(Y_{\mathbf{m}}^{s-1,\sigma}\right)^*$ defined by
	$$\forall g\in Y_{\mathbf{m}}^{s-1,\sigma},\quad l(g)\triangleq\left\langle g,g_{0,\mathbf{m},a,b}^{\pm}\right\rangle.$$
	Actually, the identity \eqref{range-ker-a} is just a reformulation of \eqref{range-a} and the only thing to prove is that $l\in\left(Y_{\mathbf{m}}^{s-1,\sigma}\right)^*.$ For this aim, we denote $\|\cdot\|\triangleq\sqrt{\langle\cdot,\cdot\rangle}$ the norm associated with the scalar product $\langle\cdot,\cdot\rangle.$ Then, for any $g=(g_+,g_-)\in Y_{\mathbf{m}}^{s-1,\sigma}$ in the form \eqref{form g} (recall that $s\geqslant1$ and $\sigma>0$), we have 
	\begin{align*}
		\|g\|^2&=\tfrac{1}{2}\sum_{j=1}^{\infty}\Big[\left(g_j^+\right)^2+\left(g_j^-\right)^2\Big]\\
		&\leqslant\tfrac{1}{2}\sum_{j=1}^{\infty}\Big[(2\pi j)^{2(s-1)}\left(g_j^+\right)^2e^{4\pi\sigma j}+(2\pi j)^{2(s-1)}\left(g_j^-\right)^2e^{4\pi\sigma j}\Big]\\
		&\leqslant\tfrac{1}{2}\big(\|g_+\|_{s-1,\sigma}^2+\|g_-\|_{s-1,\sigma}^2\big),
	\end{align*}
from which we deduce
\begin{align*}
	\|g\|&\leqslant\tfrac{1}{\sqrt{2}}\big(\|g_+\|_{s-1,\sigma}^2+\|g_-\|_{s-1,\sigma}^2\big)^{\frac{1}{2}}\\
	&\leqslant\tfrac{1}{\sqrt{2}}\big(\|g_+\|_{s-1,\sigma}+\|g_-\|_{s-1,\sigma}\big)\\
	&=\tfrac{1}{\sqrt{2}}\|g\|_{s-1,\sigma}.
\end{align*}
In the second inequality above, we have used the classical relation
$$\forall(a,b)\in(\mathbb{R}_+)^2,\quad\sqrt{a+b}\leqslant\sqrt{a}+\sqrt{b}.$$
Therefore, by Cauchy-Schwarz inequality, we get
	\begin{align*}
		|l(g)|&=\left|\left\langle g,g_{0,\mathbf{m},a,b}^{\pm}\right\rangle\right|\\
		&\leqslant\left\|g_{0,\mathbf{m},a,b}^{\pm}\right\|\|g\|\\
		&\leqslant\tfrac{1}{\sqrt{2}}\left\|g_{0,\mathbf{m},a,b}^{\pm}\right\|\|g\|_{s-1,\sigma}.
	\end{align*}
	This proves the claim.\\
	$\blacktriangleright$ \textbf{Transversality condition :} From \eqref{lin op}-\eqref{lin op 2}, we have
	$$\partial_{c}d_{(\check{r}_+,\check{r}_-)}F(a,b,c,0,0)=\begin{pmatrix}
		-\partial_{x} & 0\\
		0 & -\partial_{x}
	\end{pmatrix}.$$
	Hence,
	$$\partial_{c}d_{(\check{r}_{+},\check{r}_{-})}F\big(a,b,c_{\mathbf{m}}^{\pm}(a,b),0,0\big)[\check{r}_{0,\mathbf{m},a,b}^{\pm}](x)=2\pi\mathbf{m}\begin{pmatrix}
		2\pi\mathbf{m}\big(a-c_{\mathbf{m}}^{\pm}(a,b)\big)-\tfrac{1}{2\pi\mathbf{m}(b-a)}\\
		-\tfrac{1}{2\pi\mathbf{m}(b-a)}
	\end{pmatrix}\sin(2\pi \mathbf{m}x).$$
	Straightforward computations lead to
	\begin{equation}\label{trans exp cpm}
		\left\langle \partial_{c}d_{(\check{r}_{+},\check{r}_{-})}F\big(a,b,c_{\mathbf{m}}^{\pm}(a,b),0,0\big)[\check{r}_{0,\mathbf{m},a,b}^{\pm}]\,,\,g_{0,\mathbf{m},a,b}^{\pm}\right\rangle=2\pi\mathbf{m}\Big[a-c_{\mathbf{m}}^{\pm}(a,b)\Big]\Big[2\pi^{2}\mathbf{m}^2\big(a-c_{\mathbf{m}}^{\pm}(a,b)\big)-\tfrac{1}{b-a}\Big].
	\end{equation}
	Since $c_{\mathbf{m}}^{+}(a,b)>b>a,$ then 
	$$a-c_{\mathbf{m}}^{+}(a,b)<0\qquad\textnormal{and}\qquad 2\pi^{2}\mathbf{m}^2\big(a-c_{\mathbf{m}}^{+}(a,b)\big)-\tfrac{1}{b-a}<0.$$
	Thus,
	\begin{equation}\label{sgn cm-cond}
		\left\langle \partial_{c}d_{(\check{r}_{+},\check{r}_{-})}F\big(a,b,c_{\mathbf{m}}^{+}(a,b),0,0\big)[\check{r}_{0,\mathbf{m},a,b}^{+}]\,,\,g_{0,\mathbf{m},a,b}^{+}\right\rangle>0.
	\end{equation}
	In addition, one can easily check from \eqref{def cj} that
	$$a>c_{\mathbf{m}}^{-}(a,b)>a-\frac{1}{2\pi^2\mathbf{m}^2(b-a)}\cdot$$
	Therefore,
	\begin{equation}\label{sgn cm+cond}
		\left\langle \partial_{c}d_{(\check{r}_{+},\check{r}_{-})}F\big(a,b,c_{\mathbf{m}}^{-}(a,b),0,0\big)[\check{r}_{0,\mathbf{m},a,b}^{-}]\,,\,g_{0,\mathbf{m},a,b}^{-}\right\rangle<0.
	\end{equation}
	In particular, in both cases, we have
	$$\left\langle \partial_{c}d_{(\check{r}_{+},\check{r}_{-})}F\big(a,b,c_{\mathbf{m}}^{\pm}(a,b),0,0\big)[\check{r}_{0,\mathbf{m},a,b}^{\pm}]\,,\,g_{0,\mathbf{m},a,b}^{\pm}\right\rangle\neq 0,$$
	which is, in view of \eqref{range-ker-a}, equivalent to
	\begin{equation}\label{transversality-a}
		\partial_{c}d_{(\check{r}_{+},\check{r}_{-})}F\big(a,b,c_{\mathbf{m}}^{\pm}(a,b),0,0\big)[\check{r}_{0,\mathbf{m},a,b}^{\pm}]\not\in\mathcal{R}\Big(d_{(\check{r}_{+},\check{r}_{-})}F\big(a,b,c_{\mathbf{m}}^{\pm}(a,b),0,0\big)\Big).
	\end{equation}
	Finally, \eqref{trivial line a-c}, \eqref{regularity F}, \eqref{Fredholmness}, \eqref{ker-a} and \eqref{transversality-a} allow to apply the Crandall-Rabinowitz Theorem \ref{thm CR+S} which concludes the existence part of Theorem \ref{thm E-states}-(i). In the sequel, we denote
	\begin{equation}\label{def local curve Cpm}
		\mathscr{C}_{\textnormal{\tiny{local}}}^{\pm,\mathbf{m}}(a,b):\mathtt{s}\in(-\delta,\delta)\mapsto \Big(c_{\mathbf{m}}^{\pm}(\mathtt{s},a,b)\,,\,\check{r}_{\mathbf{m}}^{\pm}(\mathtt{s},a,b)\Big)\in\mathbb{R}\times X_{\mathbf{m}}^{s,\sigma},\qquad\delta>0
	\end{equation}
	the corresponding (real-analytic) local curve which satisfies
	\begin{equation}\label{frst orders exp cpm}
		c_{\mathbf{m}}^{\pm}(0,a,b)=c_{\mathbf{m}}^{\pm}(a,b),\qquad\frac{d}{d\mathtt{s}}\Big(\check{r}_{\mathbf{m}}^{\pm}(\mathtt{s},a,b)\Big)\Big|_{\mathtt{s}=0}=\check{r}_{0,\mathbf{m},a,b}^{\pm}.
	\end{equation}
	$\blacktriangleright$ \textbf{Pitchfork-type bifurcation :} According to Theorem \ref{thm CR+S}, we first need to prove that
	\begin{equation}\label{pitchforck cond}
		d_{(\check{r}_+,\check{r}_-)}^{2}F\big(a,b,c_{\mathbf{m}}^{\pm}(a,b),0,0\big)[\check{r}_{0,\mathbf{m},a,b}^{\pm},\check{r}_{0,\mathbf{m},a,b}^{\pm}]\in\mathcal{R}\Big(d_{(\check{r}_{+},\check{r}_{-})}F\big(a,b,c_{\mathbf{m}}^{\pm}(a,b),0,0\big)\Big).
	\end{equation}
	One readily has from \eqref{system rpm per},
	\begin{equation}\label{diff 2 and 3}
		d_{(\check{r}_+,\check{r}_-)}^{2}F_{\pm} \big(a,b,c,0,0\big)[(h_+,h_-),(\widetilde{h}_+,\widetilde{h}_-)]=\partial_{x}\big(h_{\pm}\widetilde{h}_{\pm}\big),\qquad d_{(\check{r}_+,\check{r}_-)}^{3}F(a,b,c,0,0)=0.
	\end{equation}
	Consequently, using the identity \eqref{sicosisi}, we find
	\begin{align}\label{d2Frr}
		&d_{(\check{r}_+,\check{r}_-)}^{2}F\big(a,b,c_{\mathbf{m}}^{\pm}(a,b),0,0\big)[\check{r}_{0,\mathbf{m},a,b}^{\pm},\check{r}_{0,\mathbf{m},a,b}^{\pm}]\nonumber\\
		&=\begin{pmatrix}
			\Big(2\pi\mathbf{m}\big(a-c_{\mathbf{m}}^{\pm}(a,b)\big)-\tfrac{1}{2\pi\mathbf{m}(b-a)}\Big)^2\\
			\tfrac{1}{4\pi^2\mathbf{m}^2(b-a)^2}
		\end{pmatrix}\partial_{x}\Big(\cos^2(2\pi\mathbf{m}x)\Big)\nonumber\\
		&=-4\pi\mathbf{m}\begin{pmatrix}
			\Big(2\pi\mathbf{m}\big(a-c_{\mathbf{m}}^{\pm}(a,b)\big)-\tfrac{1}{2\pi\mathbf{m}(b-a)}\Big)^2\\
			\tfrac{1}{4\pi^2\mathbf{m}^2(b-a)^2}
		\end{pmatrix}\sin(2\pi\mathbf{m}x)\cos(2\pi\mathbf{m}x)\nonumber\\
		&=-2\pi\mathbf{m}\begin{pmatrix}
			\Big(2\pi\mathbf{m}\big(a-c_{\mathbf{m}}^{\pm}(a,b)\big)-\tfrac{1}{2\pi\mathbf{m}(b-a)}\Big)^2\\
			\tfrac{1}{4\pi^2\mathbf{m}^2(b-a)^2}
		\end{pmatrix}\sin(4\pi\mathbf{m}x).
	\end{align}
	The orthogonality of the family $\big(x\mapsto\sin(2\pi j\mathbf{m}x)\big)_{j\in\mathbb{N}^*}$ with respect to the scalar product \eqref{scalar product per} implies
	$$\left\langle d_{(\check{r}_+,\check{r}_-)}^{2}F\big(a,b,c_{\mathbf{m}}^{\pm}(a,b),0,0\big)[\check{r}_{0,\mathbf{m},a,b}^{\pm},\check{r}_{0,\mathbf{m},a,b}^{\pm}]\,,\,g_{0,\mathbf{m},a,b}^{\pm}\right\rangle=0,$$
	which is equivalent to \eqref{pitchforck cond} according to \eqref{range-ker-a}. In addition, one can easily check from \eqref{linear per} and \eqref{d2Frr} that
	\begin{equation}\label{def th-ab}
		\theta_{0,\mathbf{m},a,b}^{\pm}(x)\triangleq2\pi\mathbf{m}M_{2\mathbf{m}}^{-1}\big(a,b,c_{\mathbf{m}}^{\pm}(a,b)\big)\begin{pmatrix}
			\Big(2\pi\mathbf{m}\big(a-c_{\mathbf{m}}^{\pm}(a,b)\big)-\tfrac{1}{2\pi\mathbf{m}(b-a)}\Big)^2\\
			\tfrac{1}{4\pi^2\mathbf{m}^2(b-a)^2}
		\end{pmatrix}\cos(4\pi\mathbf{m}x)
	\end{equation}
	satisfies
	$$d_{(\check{r}_{+},\check{r}_{-})}F\big(a,b,c_{\mathbf{m}}^{\pm}(a,b),0,0\big)[\theta_{0,\mathbf{m},a,b}^{\pm}]=d_{(\check{r}_+,\check{r}_-)}^{2}F\big(a,b,c_{\mathbf{m}}^{\pm}(a,b),0,0\big)[\check{r}_{0,\mathbf{m},a,b}^{\pm},\check{r}_{0,\mathbf{m},a,b}^{\pm}].$$
	Then, according to Theorem \ref{thm CR+S}, \eqref{pitchforck cond} and \eqref{diff 2 and 3}, we have
	\begin{equation}\label{vanishing frist order cpm}
		\frac{d}{d\mathtt{s}}\Big(c_{\mathbf{m}}^{\pm}(\mathtt{s},a,b)\Big)\Big|_{\mathtt{s}=0}=0
	\end{equation}
	and
	\begin{equation}\label{param curv diff0}
		\frac{d^2}{d\mathtt{s}^2}\Big(c_{\mathbf{m}}^{\pm}(\mathtt{s},a,b)\Big)\Big|_{\mathtt{s}=0}=\frac{\left\langle d_{(\check{r}_{+},\check{r}_{-})}^2F\big(a,b,c_{\mathbf{m}}^{\pm}(a,b),0,0\big)[\check{r}_{0,\mathbf{m},a,b}^{\pm},\theta_{0,\mathbf{m},a,b}^{\pm}]\,,\,g_{0,\mathbf{m},a,b}^{\pm}\right\rangle}{\left\langle \partial_c d_{(\check{r}_{+},\check{r}_{-})}F\big(a,b,c_{\mathbf{m}}^{\pm}(a,b),0,0\big)[\check{r}_{0,\mathbf{m},a,b}^{\pm}]\,,\,g_{0,\mathbf{m},a,b}^{\pm}\right\rangle}\cdot
	\end{equation}
	Now, to conclude the pitchfork bifurcation, it remains to prove that
	$$\left\langle d_{(\check{r}_{+},\check{r}_{-})}^2F\big(a,b,c_{\mathbf{m}}^{\pm}(a,b),0,0\big)[\check{r}_{0,\mathbf{m},a,b}^{\pm},\theta_{0,\mathbf{m},a,b}^{\pm}]\,,\,g_{0,\mathbf{m},a,b}^{\pm}\right\rangle\neq0$$
	and more precisely, to study the sign of the previous quantity in order to obtain the (local) direction of the branch. Looking at \eqref{def Mj per}, we denote for any $j\in\mathbb{N}^*,$
	\begin{equation}\label{mat not}
		M_{j}\big(a,b,c\big)\triangleq\begin{pmatrix}
			\alpha_{j}\big(a,b,c\big)& -\gamma_{j}(a,b)\\
			\gamma_{j}(a,b) & \beta_{j}(a,b,c)
		\end{pmatrix}.
	\end{equation}
	With this notation, we have
	$$\check{r}_{0,\mathbf{m},a,b}^{\pm}(x)=\begin{pmatrix}
		\beta_{\mathbf{m}}\big(a,b,c_{\mathbf{m}}^{\pm}(a,b)\big)\\ 
		-\gamma_{\mathbf{m}}(a,b)
	\end{pmatrix}\cos(2\pi\mathbf{m}x),\qquad g_{0,\mathbf{m},a,b}^{\pm}(x)=\begin{pmatrix}
		\beta_{\mathbf{m}}\big(a,b,c_{\mathbf{m}}^{\pm}(a,b)\big)\\
		\gamma_{\mathbf{m}}(a,b)
	\end{pmatrix}\sin(2\pi\mathbf{m}x)$$
	and, using \eqref{sign det1} and \eqref{def th-ab},
	\begin{align*}
		\theta_{0,\mathbf{m},a,b}^{\pm}(x)&=\frac{2\pi\mathbf{m}}{3}\begin{pmatrix}
			\beta_{2\mathbf{m}}\big(a,b,c_{\mathbf{m}}^{\pm}(a,b)\big) & \gamma_{2\mathbf{m}}(a,b)\\
			-\gamma_{2\mathbf{m}}(a,b) & \alpha_{2\mathbf{m}}\big(a,b,c_{\mathbf{m}}^{\pm}(a,b)\big)
		\end{pmatrix}\begin{pmatrix}
			\beta_{\mathbf{m}}^2\big(a,b,c_{\mathbf{m}}^{\pm}(a,b)\big)\\
			\gamma_{\mathbf{m}}^2(a,b)
		\end{pmatrix}\cos(4\pi\mathbf{m}x)\vspace{0.2cm}\\
		&=\frac{2\pi\mathbf{m}}{3}\begin{pmatrix}
			\beta_{2\mathbf{m}}\big(a,b,c_{\mathbf{m}}^{\pm}(a,b)\big)\beta_{\mathbf{m}}^2\big(a,b,c_{\mathbf{m}}^{\pm}(a,b)\big)+\gamma_{2\mathbf{m}}(a,b)\gamma_{\mathbf{m}}^2(a,b)\vspace{0.2cm}\\
			-\gamma_{2\mathbf{m}}(a,b)\beta_{\mathbf{m}}^2\big(a,b,c_{\mathbf{m}}^{\pm}(a,b)\big)+\alpha_{2\mathbf{m}}\big(a,b,c_{\mathbf{m}}^{\pm}(a,b)\big)\gamma_{\mathbf{m}}^2(a,b)
		\end{pmatrix}\cos(4\pi\mathbf{m}x).
	\end{align*}
	Therefore, by \eqref{diff 2 and 3}, we infer
	\begin{align*}
		&d_{(\check{r}_{+},\check{r}_{-})}^2F\big(a,b,c_{\mathbf{m}}^{\pm}(a,b),0,0\big)[\check{r}_{0,\mathbf{m},a,b}^{\pm},\theta_{0,\mathbf{m},a,b}^{\pm}]\\
		&=\tfrac{2\pi\mathbf{m}\partial_{x}\big(\cos(4\pi\mathbf{m}x)\cos(2\pi\mathbf{m}x)\big)}{3}\begin{pmatrix}
			\beta_{2\mathbf{m}}\big(a,b,c_{\mathbf{m}}^{\pm}(a,b)\big)\beta_{\mathbf{m}}^3\big(a,b,c_{\mathbf{m}}^{\pm}(a,b)\big)+\gamma_{2\mathbf{m}}(a,b)\gamma_{\mathbf{m}}^2(a,b)\beta_{\mathbf{m}}\big(a,b,c_{\mathbf{m}}^{\pm}(a,b)\big)\\
			-\alpha_{2\mathbf{m}}\big(a,b,c_{\mathbf{m}}^{\pm}(a,b)\big)\gamma_{\mathbf{m}}^3(a,b)+\gamma_{2\mathbf{m}}(a,b)\gamma_{\mathbf{m}}(a,b)\beta_{\mathbf{m}}^2\big(a,b,c_{\mathbf{m}}^{\pm}(a,b)\big)
		\end{pmatrix}.
	\end{align*}
	Now, from the classical relation
	$$\forall(u,v)\in\mathbb{R}^2,\quad\cos(u)\cos(v)=\tfrac{1}{2}\big(\cos(u+v)+\cos(u-v)\big),$$
	we obtain
	$$\partial_{x}\Big(\cos(4\pi\mathbf{m}x)\cos(2\pi\mathbf{m}x)\Big)=-3\pi\mathbf{m}\sin(6\pi\mathbf{m}x)-\pi\mathbf{m}\sin(2\pi\mathbf{m}x).$$
	Together with the orthogonality of the family $\big(x\mapsto\sin(2\pi j\mathbf{m}x)\big)_{j\in\mathbb{N}^*}$ with respect to the scalar product \eqref{scalar product per}, we get
	\begin{equation}\label{pitch-term1}
		\left\langle d_{(\check{r}_{+},\check{r}_{-})}^2F\big(a,b,c_{\mathbf{m}}^{\pm}(a,b),0,0\big)[\check{r}_{0,\mathbf{m},a,b}^{\pm},\theta_{0,\mathbf{m},a,b}^{\pm}]\,,\,g_{0,\mathbf{m},a,b}^{\pm}\right\rangle=\frac{\pi^2\mathbf{m}^2}{3}\mathtt{f}^{\pm}(\mathbf{m},a,b),
	\end{equation}
	where $\mathtt{f}^{\pm}$ is a well-defined continuous fonction on $[1,\infty)\times\mathbb{S}$ given by
	$$\mathtt{f}^{\pm}(z,a,b)\triangleq \alpha_{2z}\big(a,b,c_{z}^{\pm}(a,b)\big)\gamma_{z}^4(a,b)-\beta_{2z}\big(a,b,c_{z}^{\pm}(a,b)\big)\beta_{z}^4\big(a,b,c_{z}^{\pm}(a,b)\big)-2\gamma_{2z}(a,b)\gamma_{z}^2(a,b)\beta_{z}^2\big(a,b,c_{z}^{\pm}(a,b)\big).$$
	After tedious computations using \eqref{def cj}, we find
	\begin{equation}\label{exp fpmzc}
		\mathtt{f}^{\pm}(z,a,b)=\frac{1}{4\pi^3 z^3(b-a)^3}\Big(\pm A(z,a,b)\sqrt{\pi^2z^2(b-a)^2+1}+B(z,a,b)\Big),
	\end{equation}
	where
	\begin{align*}
		A(z,a,b)&\triangleq128\pi^{7}z^{7}(b-a)^{7}+232\pi^{5}z^{5}(b-a)^{5}+132\pi^{3}z^{3}(b-a)^{3}+24\pi z(b-a),\\
		B(z,a,b)&\triangleq128\pi^{8}z^{8}(b-a)^{8}+296\pi^6z^6(b-a)^6+232\pi^4z^4(b-a)^4+69\pi^2z^2(b-a)^2+6.
	\end{align*}
	Clearly, with this expression, we can conclude that
	$$\forall (z,a,b)\in[1,\infty)\times\mathbb{S},\quad \mathtt{f}^{+}(z,a,b)>0.$$
	Combined with \eqref{pitch-term1}, we deduce
	\begin{equation}\label{sgn pitch00+}
		\left\langle d_{(\check{r}_{+},\check{r}_{-})}^2F\big(a,b,c_{\mathbf{m}}^{+}(a,b),0,0\big)[\check{r}_{0,\mathbf{m},a,b}^{+},\theta_{0,\mathbf{m},a,b}^{+}]\,,\,g_{0,\mathbf{m},a,b}^{+}\right\rangle>0.
	\end{equation}
	Assume, for the sake of contradiction, that there exists $(z,a,b)\in[1,\infty)\times\mathbb{S}$ such that $\mathtt{f}^{-}(z,a,b)=0.$ Then this equation is equivalent to
	$$\sqrt{\pi^2z^2(b-a)^2+1}=\frac{B(z,a,b)}{A(z,a,b)}\cdot$$
	Taking the square of the previous expression, then, after simplifications, we end up with
	\begin{align*}
		0&=256\pi^{8}z^{8}(b-a)^{8}+672\pi^{6}z^{6}(b-a)^{6}+633\pi^{4}z^{4}(b-a)^{4}+252\pi^{2}z^{2}(b-a)^{2}+36,
	\end{align*}
	which is impossible since the right hand-side is positive. We deduce that 
	$$\forall(z,a,b)\in[1,\infty)\times\mathbb{S},\quad \mathtt{f}^{-}(z,a,b)\neq0.$$
	Now, we have the following asymptotic
	$$\mathtt{f}^{-}(z,a,b)\underset{z\to\infty}{\sim}16\pi^3z^3(b-a)^3.$$
	Hence, by a continuity argument, we infer
	$$\forall(z,a,b)\in[1,\infty)\times\mathbb{S},\quad \mathtt{f}^{-}(z,a,b)>0.$$
	Combined with \eqref{pitch-term1}, we deduce
	\begin{equation}\label{sgn pitch00-}
		\left\langle d_{(\check{r}_{+},\check{r}_{-})}^2F\big(a,b,c_{\mathbf{m}}^{-}(a,b),0,0\big)[\check{r}_{0,\mathbf{m},a,b}^{-},\theta_{0,\mathbf{m},a,b}^{-}]\,,\,g_{0,\mathbf{m},a,b}^{-}\right\rangle>0.
	\end{equation}
	Finally, putting together \eqref{param curv diff0}, \eqref{sgn cm-cond}, \eqref{sgn cm+cond}, \eqref{sgn pitch00+} and \eqref{sgn pitch00-}, we obtain
	$$\frac{d^2}{d\mathtt{s}^2}\Big(c_{\mathbf{m}}^{-}(\mathtt{s},a,b)\Big)\Big|_{\mathtt{s}=0}<0\qquad\textnormal{and}\qquad\frac{d^2}{d\mathtt{s}^2}\Big(c_{\mathbf{m}}^{+}(\mathtt{s},a,b)\Big)\Big|_{\mathtt{s}=0}>0,$$
	which corresponds to a "hyperbolic" bifurcation diagram. 
	This achieves the proof of Theorem \ref{thm E-states}-(i).
	\subsection{Bifurcation from $b$}
	In this subsection, we fix $a,c\in\mathbb{R}$ with $a\neq c$ and study the bifurcation with respect to the parameter $b>a.$ Observe that \eqref{determinant Mj per} implies
	\begin{equation}\label{def bjac}
		\Delta_{j}(a,b,c)=0\qquad\Leftrightarrow\qquad b=b_j(a,c)\triangleq c+\frac{1}{4\pi^2j^2(a-c)}\cdot
	\end{equation}
	Here and in the sequel, we shall use the following notation: for any $p\in\mathbb{R}^*$, we denote
	$$N_1(p)\triangleq1+ N_2(p),\qquad N_{2}(p)\triangleq\big\lfloor\tfrac{1}{2\pi|p|}\big\rfloor.$$
	The constraint $b>a$ requires the following restriction
	\begin{equation}\label{cond1}
		\begin{cases}
			4\pi^2j^2(a-c)^2>1, & \textnormal{if }a<c,\\
			4\pi^2j^2(a-c)^2<1, & \textnormal{if }a>c.
		\end{cases}
	\end{equation}
\texttt{Case 1 :} If $a<c$, then there is a countable family of potential bifurcation points, namely
$$b_{\mathbf{m}}(a,c),\qquad\mathbf{m}\geqslant N_1(c-a).$$
\texttt{Case 2 :} If $a>c$, then there are two options.
\begin{enumerate}[label=\textbullet]
	\item If $a\geqslant c+\tfrac{1}{2\pi}$, no bifurcation point exists.
	\item If $a<c+\tfrac{1}{2\pi}$, then there is a finite number of potential bifurcation points, namely
	$$b_{\mathbf{m}}(a,c),\qquad\mathbf{m}\in\big\llbracket1,N_2(a-c)\big\rrbracket.$$
\end{enumerate}
In what follows, the condition $a>c$ always refers to $c<a<c+\tfrac{1}{2\pi}\cdot$\\

\noindent $\blacktriangleright$ \textbf{One dimensional kernel condition :} For $a<c$ (resp. $a>c$) the sequence $\big(b_{j}(a,c)\big)_{j\in\mathbb{N}^*}$ (well-defined) is increasing (resp. decreasing) and tends to $c$ as $j\to\infty.$ In addition, one has, for any fixed $\mathbf{m}\in\mathbb{N}^*$ with $\mathbf{m}\geqslant N_1(c-a)$ (resp. $\mathbf{m}\in\llbracket 1,N_2(a-c)\rrbracket$), 
$$\Delta_{\mathbf{m}}\big(a,b_{\mathbf{m}}(a,c),c\big)=0$$
and, similarly to \eqref{sign det1},
\begin{equation}\label{detsgnac}
	\forall j\in\mathbb{N}\setminus\{0,1\},\quad\Delta_{j\mathbf{m}}\big(a,b_{\mathbf{m}}(a,c),c\big)=j^2-1>0.
\end{equation}
Thus, in both cases, the kernel of $d_{(\check{r}_{+},\check{r}_{-})}F\big(a,b_{\mathbf{m}}(a,c),c,0,0\big)$ is one dimensional, more precisely 
\begin{equation}\label{ker-ac}
	\begin{array}{l}
		\ker\Big(d_{(\check{r}_{+},\check{r}_{-})}F\big(a,b_{\mathbf{m}}(a,c),c,0,0\big)\Big)=\mathtt{span}(\check{r}_{0,\mathbf{m},a,c}),\vspace{0.2cm}\\
		\check{r}_{0,\mathbf{m},a,c}(x)\triangleq\begin{pmatrix}
		2\pi\mathbf{m}\big(a-c\big)-\tfrac{1}{2\pi\mathbf{m}\big(b_{\mathbf{m}}(a,c)-a\big)}\\
		-\tfrac{1}{2\pi\mathbf{m}\big(b_{\mathbf{m}}(a,c)-a\big)}
	\end{pmatrix}\cos(2\pi \mathbf{m}x).
\end{array}
\end{equation}
$\blacktriangleright$ \textbf{Range condition :} In both cases, $a\neq c$ and the definition of $b_{\mathbf{m}}(a,c)$ in \eqref{def bjac} implies that $b_{\mathbf{m}}(a,c)\neq c.$ So the Fredholmness property \eqref{Fredholmness} gives that the range $\mathcal{R}\Big(d_{(\check{r}_{+},\check{r}_{-})}F\big(a,b_{\mathbf{m}}(a,c),c,0,0\big)\Big)$ is closed and of codimension one in $Y_{\mathbf{m}}^{s-1,\sigma}.$ More precisely, arguying as in the previous subsection, we find
\begin{equation}\label{range-ac}
	\begin{array}{l}
		\mathcal{R}\Big(d_{(\check{r}_{+},\check{r}_{-})}F\big(a,b_{\mathbf{m}}(a,c),c,0,0\big)\Big)=\Big(\mathtt{span}( g_{0,\mathbf{m},a,c})\Big)^{\perp}=\ker\Big(g\mapsto\big\langle g,g_{0,\mathbf{m},a,c}\big\rangle\Big),\vspace{0.2cm}\\ g_{0,\mathbf{m},a,c}(x)\triangleq\begin{pmatrix}
		2\pi\mathbf{m}\big(a-c\big)-\tfrac{1}{2\pi\mathbf{m}\big(b_{\mathbf{m}}(a,c)-a\big)}\\
		\tfrac{1}{2\pi\mathbf{m}\big(b_{\mathbf{m}}(a,c)-a\big)}
	\end{pmatrix}\sin(2\pi\mathbf{m}x),
\end{array}
\end{equation}
where the orthogonal is understood in the sense of the scalar product defined in \eqref{scalar product per}.\\
$\blacktriangleright$ \textbf{Transversality condition :} From \eqref{lin op}-\eqref{lin op 2}, we have
$$\partial_{b}d_{(\check{r}_+,\check{r}_-)}F(a,b,c,0,0)=\begin{pmatrix}
	\partial_{x} & 0\\
	0 & 0
\end{pmatrix}+\frac{1}{(b-a)^2}\begin{pmatrix}
\partial_{x}^{-1} & -\partial_{x}^{-1}\\
\partial_{x}^{-1} & -\partial_{x}^{-1}
\end{pmatrix}.$$
Hence,
$$\partial_{b}d_{(\check{r}_{+},\check{r}_{-})}F\big(a,b_{\mathbf{m}}(a,c),c,0,0\big)[\check{r}_{0,\mathbf{m},a,c}]=h_{0,\mathbf{m},a,c}^{(2)}-h_{0,\mathbf{m},a,c}^{(1)},$$
where
\begin{align*}
	h_{0,\mathbf{m},a,c}^{(1)}(x)&\triangleq2\pi\mathbf{m}\begin{pmatrix}
		2\pi\mathbf{m}\big(a-c\big)-\tfrac{1}{2\pi\mathbf{m}\big(b_{\mathbf{m}}(a,c)-a\big)}\\
		0
	\end{pmatrix}\sin(2\pi\mathbf{m}x),\\
h_{0,\mathbf{m},a,c}^{(2)}(x)&\triangleq\frac{a-c}{\big(b_{\mathbf{m}}(a,c)-a\big)^2}\begin{pmatrix}
	1\\
	1
\end{pmatrix}\sin(2\pi\mathbf{m}x).
\end{align*}
On one hand
\begin{align*}
	\left\langle h_{0,\mathbf{m},a,c}^{(1)}\,,\,g_{0,\mathbf{m},a,c}\right\rangle=\pi\mathbf{m}\left(2\pi\mathbf{m}\big(a-c\big)-\tfrac{1}{2\pi\mathbf{m}\big(b_{\mathbf{m}}(a,c)-a\big)}\right)^2.
\end{align*}
On the other hand
\begin{align*}
	\left\langle h_{0,\mathbf{m},a,c}^{(2)}\,,\,g_{0,\mathbf{m},a,c}\right\rangle=\frac{\pi\mathbf{m}(a-c)^2}{\big(b_{\mathbf{m}}(a,c)-a\big)^2}\cdot
\end{align*}
Hence, using \eqref{def bjac}, we obtain
\begin{align*}
	&\left\langle\partial_{b}d_{(\check{r}_{+},\check{r}_{-})}F\big(a,b_{\mathbf{m}}(a,c),c,0,0\big)[\check{r}_{0,\mathbf{m},a,c}]\,,\,g_{0,\mathbf{m},a,c}\right\rangle=\left\langle h_{0,\mathbf{m},a,c}^{(2)}-h_{0,\mathbf{m},a,c}^{(1)}\,,\,g_{0,\mathbf{m},a,c}\right\rangle\\
	&=\pi\mathbf{m}\left(\frac{(a-c)^2}{\big(b_{\mathbf{m}}(a,c)-a\big)^2}-4\pi^2\mathbf{m}^2(a-c)^2+2\frac{a-c}{b_{\mathbf{m}}(a,c)-a}-\frac{1}{4\pi^2\mathbf{m}^2\big(b_{\mathbf{m}}(a,c)-a\big)^2}\right)\\
	&=\frac{\pi\mathbf{m}(a-c)^2}{\big(b_{\mathbf{m}}(a,c)-a\big)^2}\Big(1-4\pi^2\mathbf{m}^2(a-c)^2\Big).
\end{align*}
Therefore, the condition \eqref{cond1} gives
\begin{equation}\label{sign dbdF-b}
	\begin{cases}
		\left\langle\partial_{b}d_{(\check{r}_{+},\check{r}_{-})}F\big(a,b_{\mathbf{m}}(a,c),c,0,0\big)[\check{r}_{0,\mathbf{m},a,c}]\,,\,g_{0,\mathbf{m},a,c}\right\rangle<0, & \textnormal{if }a<c,\\
		\left\langle\partial_{b}d_{(\check{r}_{+},\check{r}_{-})}F\big(a,b_{\mathbf{m}}(a,c),c,0,0\big)[\check{r}_{0,\mathbf{m},a,c}]\,,\,g_{0,\mathbf{m},a,c}\right\rangle>0, & \textnormal{if }a>c.
	\end{cases}
\end{equation}
In both cases
$$\left\langle\partial_{b}d_{(\check{r}_{+},\check{r}_{-})}F\big(a,b_{\mathbf{m}}(a,c),c,0,0\big)[\check{r}_{0,\mathbf{m},a,c}]\,,\,g_{0,\mathbf{m},a,c}\right\rangle\neq0,$$
which means
\begin{equation}\label{transversality-ac}
	\partial_{b}d_{(\check{r}_{+},\check{r}_{-})}F\big(a,b_{\mathbf{m}}(a,c),c,0,0\big)[\check{r}_{0,\mathbf{m},a,c}]\not\in\mathcal{R}\Big(d_{(\check{r}_{+},\check{r}_{-})}F\big(a,b_{\mathbf{m}}(a,c),c,0,0\big)\Big).
\end{equation}
Finally, \eqref{trivial line a-c}, \eqref{regularity F}, \eqref{Fredholmness}, \eqref{ker-ac} and \eqref{transversality-ac} allow to apply the Crandall-Rabinowitz Theorem \ref{thm CR+S} which concludes the existence part of Theorem \ref{thm E-states}-(ii). In the sequel, we denote
\begin{equation}\label{def local curve bm}
	\mathscr{C}_{\textnormal{\tiny{local}}}^{\mathbf{m}}(a,c):\mathtt{s}\in(-\delta,\delta)\mapsto \Big(b_{\mathbf{m}}(\mathtt{s},a,c)\,,\,\check{r}_{\mathbf{m}}(\mathtt{s},a,c)\Big)\in\mathbb{R}\times X_{\mathbf{m}}^{s,\sigma},\qquad\delta>0
\end{equation}
the corresponding (real-analytic) local curve which satisfies
\begin{equation}\label{frst orders exp bm}
	b_{\mathbf{m}}(0,a,c)=b_{\mathbf{m}}(a,c),\qquad\frac{d}{d\mathtt{s}}\Big(\check{r}_{\mathbf{m}}(\mathtt{s},a,c)\Big)\Big|_{\mathtt{s}=0}=\check{r}_{0,\mathbf{m},a,c}.
\end{equation}
$\blacktriangleright$ \textbf{Pitchfork-type bifurcation :}
As in the previous subsection, we can write $$d^2_{(\check{r}_{+},\check{r}_{-})}F\big(a,b_{\mathbf{m}}(a,c),c,0,0\big)[\check{r}_{0,\mathbf{m},a,c},\check{r}_{0,\mathbf{m},a,c}]=d_{(\check{r}_{+},\check{r}_{-})}F\big(a,b_{\mathbf{m}}(a,c),c,0,0\big)[\theta_{0,\mathbf{m},a,c}],$$
with
$$\theta_{0,\mathbf{m},a,c}(x)\triangleq2\pi\mathbf{m}M_{2\mathbf{m}}^{-1}\big(a,b_{\mathbf{m}}(a,c),c)\big)\begin{pmatrix}
	\left(2\pi\mathbf{m}\big(a-c\big)-\tfrac{1}{2\pi\mathbf{m}\big(b_{\mathbf{m}}(a,c)-a\big)}\right)^2\\
	\tfrac{1}{4\pi^2\mathbf{m}^2\big(b_{\mathbf{m}}(a,c)-a\big)^2}
\end{pmatrix}\cos(4\pi\mathbf{m}x).$$
Therefore, Theorem \ref{thm CR+S} applies and gives
\begin{equation*}
	\frac{d}{d\mathtt{s}}\Big(b_{\mathbf{m}}(\mathtt{s},a,c)\Big)\Big|_{\mathtt{s}=0}=0
\end{equation*}
and
\begin{equation}\label{second deriv b}
	\frac{d^2}{d\mathtt{s}^2}\Big(b_{\mathbf{m}}(\mathtt{s},a,c)\Big)\Big|_{\mathtt{s}=0}=\frac{\left\langle d_{(\check{r}_{+},\check{r}_{-})}^2F\big(a,b_{\mathbf{m}}(a,c),c,0,0\big)[\check{r}_{0,\mathbf{m},a,c},\theta_{0,\mathbf{m},a,c}]\,,\,g_{0,\mathbf{m},a,c}\right\rangle}{\left\langle \partial_b d_{(\check{r}_{+},\check{r}_{-})}F\big(a,b_{\mathbf{m}}(a,c),c,0,0\big)[\check{r}_{0,\mathbf{m},a,c}]\,,\,g_{0,\mathbf{m},a,c}\right\rangle}\cdot
\end{equation}
In addition, 
$$\left\langle d_{(\check{r}_{+},\check{r}_{-})}^2F\big(a,b_{\mathbf{m}}(a,c),c,0,0\big)[\check{r}_{0,\mathbf{m},a,c},\theta_{0,\mathbf{m},a,c}]\,,\,g_{0,\mathbf{m},a,c}\right\rangle=\frac{\pi^2\mathbf{m}^2}{3}\mathtt{h}(\mathbf{m},a,c),$$
where 
\begin{align*}
	\mathtt{h}(\mathbf{m},a,c)&\triangleq \alpha_{2\mathbf{m}}\big(a,b_{\mathbf{m}}(a,c),c\big)\gamma_{\mathbf{m}}^4\big(a,b_{\mathbf{m}}(a,c)\big)-\beta_{2\mathbf{m}}\big(a,b_{\mathbf{m}}(a,c),c\big)\beta_{\mathbf{m}}^4\big(a,b_{\mathbf{m}}(a,c),c\big)\\
	&\quad-2\gamma_{2\mathbf{m}}\big(a,b_{\mathbf{m}}(a,c)\big)\gamma_{\mathbf{m}}^2\big(a,b_{\mathbf{m}}(a,c)\big)\beta_{\mathbf{m}}^2\big(a,b_{\mathbf{m}}(a,c),c\big).
\end{align*}
After tedious calculations using \eqref{def bjac}, we can write
$$\mathtt{h}(\mathbf{m},a,c)=\frac{4\pi^3\mathbf{m}^3(a-c)^3}{\big(1-4\pi^2\mathbf{m}^2(a-c)^2\big)^5}\Big(\mathtt{h}_1\big(4\pi^2\mathbf{m}^2(a-c)^2\big)+4\Big),$$
with
\begin{equation}\label{def h1}
	\forall x\geqslant0,\quad \mathtt{h}_1(x)\triangleq4x^6-3x^5-2x^3-3x.
\end{equation}
Observe that
$$\forall x\geqslant0,\quad \mathtt{h}_1'(x)=24x^5-15x^4-6x^2-3=3(x-1)(8x^4+3x^3+3x^2+x+1).$$
We deduce that the fonction $\mathtt{h}_1$ is decreasing on $[0,1]$, increasing on $[1,\infty)$ and admits a global minimum on $[0,\infty)$ at $x=1$ with value $\mathtt{h}_1(1)=-4.$ Therefore, together with the constraint \eqref{cond1}, we deduce that
$$\mathtt{h}(\mathbf{m},a,c)>0.$$
This implies in turn
\begin{equation}\label{sign d2F-b}
	\left\langle d_{(\check{r}_{+},\check{r}_{-})}^2F\big(a,b_{\mathbf{m}}(a,c),c,0,0\big)[\check{r}_{0,\mathbf{m},a,c},\theta_{0,\mathbf{m},a,c}]\,,\,g_{0,\mathbf{m},a,c}\right\rangle>0.
\end{equation}
Combining \eqref{second deriv b}, \eqref{sign dbdF-b} and \eqref{sign d2F-b}, we obtain
$$\begin{cases}
	\frac{d^2}{d\mathtt{s}^2}\Big(b_{\mathbf{m}}(\mathtt{s},a,c)\Big)\Big|_{\mathtt{s}=0}<0, & \textnormal{if }a<c,\vspace{0.2cm}\\
	\frac{d^2}{d\mathtt{s}^2}\Big(b_{\mathbf{m}}(\mathtt{s},a,c)\Big)\Big|_{\mathtt{s}=0}>0, & \textnormal{if }a>c.
\end{cases}$$
This concludes the proof of Theorem \ref{thm E-states}-(ii).
\subsection{Bifurcation from $a$}
In this subsection, we fix $b,c\in\mathbb{R}$ with $b\neq c$ and study the bifurcation with respect to the parameter $a<b.$ This subsection is very similar to the previous one, but cannot be reduced by a symmetry argument because of the transversality and pitchfork analysis. Observe that \eqref{determinant Mj per} implies
\begin{equation}\label{def ajbc}
	\Delta_{j}(a,b,c)=0\qquad\Leftrightarrow\qquad a=a_j(b,c)\triangleq c+\frac{1}{4\pi^2j^2(b-c)}\cdot
\end{equation}
The constraint $a<b$ requires the following restriction
\begin{equation}\label{cond2}
	\begin{cases}
		4\pi^2j^2(b-c)^2>1, & \textnormal{if }b>c,\\
		4\pi^2j^2(b-c)^2<1, & \textnormal{if }b<c.
	\end{cases}
\end{equation}
\texttt{Case 1 :} If $b>c$, then there is a countable family of potential bifurcation points, namely
$$a_{\mathbf{m}}(b,c),\qquad\mathbf{m}\geqslant N_1(b-c).$$
\texttt{Case 2 :} If $b<c$, then there are two options. 
\begin{enumerate}[label=\textbullet]
	\item If $b\leqslant c-\tfrac{1}{2\pi}$, no bifurcation point exists.
	\item If $b>c-\tfrac{1}{2\pi}$ and there is a finite number of potential bifurcation points, namely
	$$a_{\mathbf{m}}(b,c),\qquad\mathbf{m}\in\big\llbracket1,N_2(c-b)\big\rrbracket.$$
\end{enumerate}
In what follows, the condition $b<c$ always refers to $c-\tfrac{1}{2\pi}<b<c.$\\

\noindent $\blacktriangleright$ \textbf{One dimensional kernel condition :} For $b>c$ (resp. $b<c$), the sequence $\big(a_{j}(b,c)\big)_{j\in\mathbb{N}^*}$ (well-defined) is decreasing (resp. increasing) and tends to $c$ as $j\to\infty.$ In addition, one has, for any fixed $\mathbf{m}\in\mathbb{N}^*$ with $\mathbf{m}\geqslant N_1(b-c)$ (resp. $\mathbf{m}\in\llbracket 1,N_2(c-b)\rrbracket$),
$$\Delta_{\mathbf{m}}\big(a_{\mathbf{m}}(b,c),b,c\big)=0$$
and, similarly to \eqref{sign det1},
\begin{equation}\label{detsgnbc}
	\forall j\in\mathbb{N}\setminus\{0,1\},\quad\Delta_{j\mathbf{m}}\big(a_{\mathbf{m}}(b,c),b,c\big)=j^2-1.
\end{equation}
Thus, in both cases, the kernel of $d_{(\check{r}_{+},\check{r}_{-})}F\big(a_{\mathbf{m}}(b,c),b,c,0,0\big)$ is one dimensional, more precisely 
\begin{equation}\label{ker-bc}
	\begin{array}{l}
		\ker\Big(d_{(\check{r}_{+},\check{r}_{-})}F\big(a_{\mathbf{m}}(b,c),b,c,0,0\big)\Big)=\mathtt{span}(\check{r}_{0,\mathbf{m},b,c}),\vspace{0.2cm}\\
		\check{r}_{0,\mathbf{m},b,c}(x)\triangleq\begin{pmatrix}
			2\pi\mathbf{m}\big(a_{\mathbf{m}}(b,c)-c\big)-\tfrac{1}{2\pi\mathbf{m}\big(b-a_{\mathbf{m}}(b,c)\big)}\\
			-\tfrac{1}{2\pi\mathbf{m}\big(b-a_{\mathbf{m}}(b,c)\big)}
		\end{pmatrix}\cos(2\pi \mathbf{m}x).
	\end{array}
\end{equation}
$\blacktriangleright$ \textbf{Range condition :} In both cases, $b\neq c$ and the definition of $a_{\mathbf{m}}(b,c)$ in \eqref{def ajbc} implies that $a_{\mathbf{m}}(b,c)\neq c.$ So the Fredholmness property \eqref{Fredholmness} gives that the range $\mathcal{R}\Big(d_{(\check{r}_{+},\check{r}_{-})}F\big(a_{\mathbf{m}}(b,c),b,c,0,0\big)\Big)$ is closed and of codimension one in $Y_{\mathbf{m}}^{s-1,\sigma}.$ More precisely, arguying as in the subsection \ref{sec vel bif}, we find
\begin{equation}\label{range-bc}
	\begin{array}{l}
		\mathcal{R}\Big(d_{(\check{r}_{+},\check{r}_{-})}F\big(a_{\mathbf{m}}(b,c),b,c,0,0\big)\Big)=\Big(\mathtt{span}( g_{0,\mathbf{m},b,c})\Big)^{\perp}=\ker\Big(g\mapsto\big\langle g,g_{0,\mathbf{m},b,c}\big\rangle\Big),\vspace{0.2cm}\\ g_{0,\mathbf{m},b,c}(x)\triangleq\begin{pmatrix}
			2\pi\mathbf{m}\big(a_{\mathbf{m}}(b,c)-c\big)-\tfrac{1}{2\pi\mathbf{m}\big(b-a_{\mathbf{m}}(b,c)\big)}\\
			\tfrac{1}{2\pi\mathbf{m}\big(b-a_{\mathbf{m}}(b,c)\big)}
		\end{pmatrix}\sin(2\pi\mathbf{m}x),
	\end{array}
\end{equation}
where the orthogonal is understood in the sense of the scalar product defined in \eqref{scalar product per}.\\
$\blacktriangleright$ \textbf{Transversality condition :} From \eqref{lin op}-\eqref{lin op 2}, we have
$$\partial_{a}d_{(\check{r}_+,\check{r}_-)}F(a,b,c,0,0)=\begin{pmatrix}
	0 & 0\\
	0 & \partial_{x}
\end{pmatrix}+\frac{1}{(b-a)^2}\begin{pmatrix}
	-\partial_{x}^{-1} & \partial_{x}^{-1}\\
	-\partial_{x}^{-1} & \partial_{x}^{-1}
\end{pmatrix}.$$
Hence,
$$\partial_{a}d_{(\check{r}_{+},\check{r}_{-})}F\big(a_{\mathbf{m}}(b,c),b,c,0,0\big)[\check{r}_{0,\mathbf{m},b,c}]=h_{0,\mathbf{m},b,c}^{(1)}-h_{0,\mathbf{m},b,c}^{(2)},$$
where
\begin{align*}
	h_{0,\mathbf{m},b,c}^{(1)}(x)&\triangleq\begin{pmatrix}
		0\\
		\tfrac{1}{b-a_{\mathbf{m}}(b,c)}
	\end{pmatrix}\sin(2\pi\mathbf{m}x),\qquad h_{0,\mathbf{m},b,c}^{(2)}(x)\triangleq\frac{a_{\mathbf{m}}(b,c)-c}{\big(b-a_{\mathbf{m}}(b,c)\big)^2}\begin{pmatrix}
		1\\
		1
	\end{pmatrix}\sin(2\pi\mathbf{m}x).
\end{align*}
We have
\begin{align*}
	\left\langle h_{0,\mathbf{m},b,c}^{(1)}\,,\,g_{0,\mathbf{m},b,c}\right\rangle=\frac{1}{4\pi\mathbf{m}\big(b-a_{\mathbf{m}}(b,c)\big)^2},\qquad\left\langle h_{0,\mathbf{m},b,c}^{(2)}\,,\,g_{0,\mathbf{m},b,c}\right\rangle=\frac{\pi\mathbf{m}\big(a_{\mathbf{m}}(b,c)-c\big)^2}{\big(b-a_{\mathbf{m}}(b,c)\big)^2}\cdot
\end{align*}
Hence, using \eqref{def ajbc}, we obtain
\begin{align*}
	&\left\langle\partial_{a}d_{(\check{r}_{+},\check{r}_{-})}F\big(a_{\mathbf{m}}(b,c),b,c,0,0\big)[\check{r}_{0,\mathbf{m},b,c}]\,,\,g_{0,\mathbf{m},b,c}\right\rangle=\left\langle h_{0,\mathbf{m},b,c}^{(1)}-h_{0,\mathbf{m},b,c}^{(2)}\,,\,g_{0,\mathbf{m},b,c}\right\rangle\\
	&=\frac{1}{4\pi\mathbf{m}\big(b-a_{\mathbf{m}}(b,c)\big)^2}\left(1-4\pi^2\mathbf{m}^2\big(a_{\mathbf{m}}(b,c)-c\big)^2\right)\\
	&=\frac{1}{16\pi^3\mathbf{m}^3(b-c)^2\big(b-a_{\mathbf{m}}(b,c)\big)^2}\Big(4\pi^2\mathbf{m}^2(b-c)^2-1\Big).
\end{align*}
Therefore, the condition \eqref{cond2} gives
\begin{equation}\label{sign dadF-a}
	\begin{cases}
		\left\langle\partial_{a}d_{(\check{r}_{+},\check{r}_{-})}F\big(a_{\mathbf{m}}(b,c),b,c,0,0\big)[\check{r}_{0,\mathbf{m},b,c}]\,,\,g_{0,\mathbf{m},b,c}\right\rangle>0, & \textnormal{if }b>c,\\
		\left\langle\partial_{a}d_{(\check{r}_{+},\check{r}_{-})}F\big(a_{\mathbf{m}}(b,c),b,c,0,0\big)[\check{r}_{0,\mathbf{m},b,c}]\,,\,g_{0,\mathbf{m},b,c}\right\rangle<0, & \textnormal{if }b<c.
	\end{cases}
\end{equation}
In both cases
$$\left\langle\partial_{a}d_{(\check{r}_{+},\check{r}_{-})}F\big(a_{\mathbf{m}}(b,c),b,c,0,0\big)[\check{r}_{0,\mathbf{m},b,c}]\,,\,g_{0,\mathbf{m},b,c}\right\rangle\neq0,$$
which means
\begin{equation}\label{transversality-bc}
	\partial_{a}d_{(\check{r}_{+},\check{r}_{-})}F\big(a_{\mathbf{m}}(b,c),b,c,0,0\big)[\check{r}_{0,\mathbf{m},b,c}]\not\in\mathcal{R}\Big(d_{(\check{r}_{+},\check{r}_{-})}F\big(a_{\mathbf{m}}(b,c),b,c,0,0\big)\Big).
\end{equation}
Finally, \eqref{trivial line a-c}, \eqref{regularity F}, \eqref{Fredholmness}, \eqref{ker-bc} and \eqref{transversality-bc} allow to apply the Crandall-Rabinowitz Theorem \ref{thm CR+S} which concludes the existence part of Theorem \ref{thm E-states}-(iii). In the sequel, we denote
\begin{equation}\label{def local curve am}
	\mathscr{C}_{\textnormal{\tiny{local}}}^{\mathbf{m}}(b,c):\mathtt{s}\in(-\delta,\delta)\mapsto \Big(a_{\mathbf{m}}(\mathtt{s},b,c)\,,\,\check{r}_{\mathbf{m}}(\mathtt{s},b,c)\Big)\in\mathbb{R}\times X_{\mathbf{m}}^{s,\sigma},\qquad\delta>0
\end{equation}
the corresponding (real-analytic) local curve which satisfies
\begin{equation}\label{frst orders exp am}
	a_{\mathbf{m}}(0,b,c)=a_{\mathbf{m}}(b,c),\qquad\frac{d}{d\mathtt{s}}\Big(\check{r}_{\mathbf{m}}(\mathtt{s},b,c)\Big)\Big|_{\mathtt{s}=0}=\check{r}_{0,\mathbf{m},b,c}.
\end{equation}
$\blacktriangleright$ \textbf{Pitchfork-type bifurcation :} As in the subsection \ref{sec vel bif}, we can write $$d^2_{(\check{r}_{+},\check{r}_{-})}F\big(a_{\mathbf{m}}(b,c),b,c,0,0\big)[\check{r}_{0,\mathbf{m},b,c},\check{r}_{0,\mathbf{m},b,c}]=d_{(\check{r}_{+},\check{r}_{-})}F\big(a_{\mathbf{m}}(b,c),b,c,0,0\big)[\theta_{0,\mathbf{m},b,c}],$$
with
$$\theta_{0,\mathbf{m},b,c}(x)\triangleq2\pi\mathbf{m}M_{2\mathbf{m}}^{-1}\big(a_{\mathbf{m}}(b,c),b,c)\big)\begin{pmatrix}
	\left(2\pi\mathbf{m}\big(a_{\mathbf{m}}(b,c)-c\big)-\tfrac{1}{2\pi\mathbf{m}\big(b-a_{\mathbf{m}}(b,c)\big)}\right)^2\\
	\tfrac{1}{4\pi^2\mathbf{m}^2\big(b-a_{\mathbf{m}}(b,c)\big)^2}
\end{pmatrix}\cos(4\pi\mathbf{m}x).$$
Therefore, Theorem \ref{thm CR+S} applies and gives
\begin{equation*}
	\frac{d}{d\mathtt{s}}\Big(a_{\mathbf{m}}(\mathtt{s},b,c)\Big)\Big|_{\mathtt{s}=0}=0
\end{equation*}
and
\begin{equation}\label{second deriv a}
	\frac{d^2}{d\mathtt{s}^2}\Big(a_{\mathbf{m}}(\mathtt{s},b,c)\Big)\Big|_{\mathtt{s}=0}=\frac{\left\langle d_{(\check{r}_{+},\check{r}_{-})}^2F\big(a_{\mathbf{m}}(b,c),b,c,0,0\big)[\check{r}_{0,\mathbf{m},b,c},\theta_{0,\mathbf{m},b,c}]\,,\,g_{0,\mathbf{m},b,c}\right\rangle}{\left\langle \partial_a d_{(\check{r}_{+},\check{r}_{-})}F\big(a_{\mathbf{m}}(b,c),b,c,0,0\big)[\check{r}_{0,\mathbf{m},b,c}]\,,\,g_{0,\mathbf{m},b,c}\right\rangle}\cdot
\end{equation}
In addition, 
$$\left\langle d_{(\check{r}_{+},\check{r}_{-})}^2F\big(a_{\mathbf{m}}(b,c),b,c,0,0\big)[\check{r}_{0,\mathbf{m},b,c},\theta_{0,\mathbf{m},b,c}]\,,\,g_{0,\mathbf{m},b,c}\right\rangle=\frac{\pi^2\mathbf{m}^2}{3}\widetilde{\mathtt{h}}(\mathbf{m},b,c),$$
where 
\begin{align*}
	\widetilde{\mathtt{h}}(\mathbf{m},b,c)&\triangleq \alpha_{2\mathbf{m}}\big(a_{\mathbf{m}}(b,c),b,c\big)\gamma_{\mathbf{m}}^4\big(a_{\mathbf{m}}(b,c),b\big)-\beta_{2\mathbf{m}}\big(a_{\mathbf{m}}(b,c),b,c\big)\beta_{\mathbf{m}}^4\big(a_{\mathbf{m}}(b,c),b,c\big)\\
	&\quad-2\gamma_{2\mathbf{m}}\big(a_{\mathbf{m}}(b,c),b\big)\gamma_{\mathbf{m}}^2\big(a_{\mathbf{m}}(b,c),b\big)\beta_{\mathbf{m}}^2\big(a_{\mathbf{m}}(b,c),b,c\big).
\end{align*}
After tedious calculations using \eqref{def ajbc}, we can write
$$\widetilde{\mathtt{h}}(\mathbf{m},b,c)=\frac{1}{64\pi^5\mathbf{m}^5(b-c)^5\big(4\pi^2\mathbf{m}^2(b-c)^2-1\big)^5}\Big(\mathtt{h}_1\big(4\pi^2\mathbf{m}^2(b-c)^2\big)+4\Big),$$
with $\mathtt{h}_1$ as in \eqref{def h1}. Therefore, together with the constraint \eqref{cond2} and the variations of $\mathtt{h}_1$ obtained in the previous subsection, we deduce that
$$\widetilde{\mathtt{h}}(\mathbf{m},a,c)>0.$$
This implies in turn
\begin{equation}\label{sign d2F-a}
	\left\langle d_{(\check{r}_{+},\check{r}_{-})}^2F\big(a_{\mathbf{m}}(b,c),b,c,0,0\big)[\check{r}_{0,\mathbf{m},b,c},\theta_{0,\mathbf{m},b,c}]\,,\,g_{0,\mathbf{m},b,c}\right\rangle>0.
\end{equation}
Combining \eqref{second deriv a}, \eqref{sign dadF-a} and \eqref{sign d2F-a}, we obtain
$$\begin{cases}
	\frac{d^2}{d\mathtt{s}^2}\Big(a_{\mathbf{m}}(\mathtt{s},b,c)\Big)\Big|_{\mathtt{s}=0}>0, & \textnormal{if }b>c,\vspace{0.2cm}\\
	\frac{d^2}{d\mathtt{s}^2}\Big(a_{\mathbf{m}}(\mathtt{s},b,c)\Big)\Big|_{\mathtt{s}=0}<0, & \textnormal{if }b<c.
\end{cases}$$
This concludes the proof of Theorem \ref{thm E-states}-(iii).
	\subsection{Area bifurcation from symmetric flat strips}
	In this subsection, we look for solutions close to the symmetric flat strip $S_{\textnormal{\tiny{flat}}}(-a,a).$ Hence the couple $(a,b)$ is replaced by the couple $(-a,a)$ with $a>0$ and the functional $F$ is replaced by
	$$G:(0,\infty)\times\mathbb{R}\times X_{\mathbf{m}}^{s,\sigma}\rightarrow Y_{\mathbf{m}}^{s-1,\sigma},\qquad G(a,c,\check{r}_+,\check{r}_-)\triangleq F(-a,a,c,\check{r}_+,\check{r}_-).$$
	In the sequel, refering to the notations \eqref{def Mj per} and \eqref{determinant Mj per}, we denote
	\begin{align}
		\widetilde{M}_{j}(a,c)&\triangleq M_j(-a,a,c)=\begin{pmatrix}
			2\pi j(a-c)+\tfrac{1}{4\pi ja} & -\tfrac{1}{4\pi ja}\\
			\tfrac{1}{4\pi ja} & -2\pi j(a+c)-\tfrac{1}{4\pi ja}
		\end{pmatrix}\triangleq\begin{pmatrix}
		\widetilde{\alpha}_{j}(a,c) & -\widetilde{\gamma}_{j}(a)\\
		\widetilde{\gamma}_{j}(a) & -\widetilde{\beta}_{j}(a,c)
	\end{pmatrix},\label{def Mj tilde}\\
	\widetilde{\Delta}_{j}(a,c)&\triangleq\Delta_{j}(-a,a,c)=-4\pi^2j^2(a^2-c^2)-1.\label{deter tilde}
	\end{align}
	Notice that the velocity bifurcation is a consequence of Section \ref{sec vel bif} but the bifurcation from $a$ differs from the previous studies. Remark that $2a$ corresponds to area of the flat strip $S_{\textnormal{\tiny{flat}}}(-a,a).$ We fix $c\in\mathbb{R}$ and study the bifurcation with respect to the parameter $a>0.$ Observe that for $c=0$, the relation \eqref{deter tilde} implies that for any $j\in\mathbb{N}^*,$ the matrix $\widetilde{M}_{j}(a,0)$ is always invertible whatever the value of $a>0.$ Hence, in the sequel, we shall restrict the discussion to the case $c\in\mathbb{R}^*.$ According to \eqref{deter tilde}, we have
	\begin{equation}\label{def aj}
		\widetilde{\Delta}_{j}(a,c)=0\qquad\Leftrightarrow\qquad a=a_{j}(c)\triangleq\sqrt{\frac{4\pi^{2}j^{2}c^{2}-1}{4\pi^{2}j^2}}\cdot
	\end{equation}
	Notice that $a_j(c)$ is well-defined only for
	$$j\geqslant N_1(c).$$
	$\blacktriangleright$ \textbf{One dimensional kernel condition :} The sequence $\big(a_{j}(c)\big)_{j\geqslant N_1(c)}$ is positive, increasing and converges to $|c|$ when $j\to\infty.$  In addition, for any fixed $\mathbf{m}\in\mathbb{N}^*$ with $\mathbf{m}\geqslant N_1(c),$ we have
	$$\widetilde{\Delta}_{\mathbf{m}}\big(a_{\mathbf{m}}(c),c\big)=0$$
	and
	\begin{equation}\label{detsgna}
		\forall j\in\mathbb{N}\setminus\{0,1\},\quad\widetilde{\Delta}_{j\mathbf{m}}\big(a_{\mathbf{m}}(c),c\big)=j^2-1>0.
	\end{equation}
	Thus, the kernel of $d_{(\check{r}_{+},\check{r}_{-})}G\big(a_{\mathbf{m}}(c),c,0,0\big)$ is one dimensional, more precisely 
	\begin{equation}\label{ker-c}
		\begin{array}{l}
			\ker\Big(d_{(\check{r}_{+},\check{r}_{-})}G\big(a_{\mathbf{m}}(c),c,0,0\big)\Big)=\mathtt{span}\big( \check{r}_{0,\mathbf{m},c}\big),\vspace{0.2cm}\\
			\check{r}_{0,\mathbf{m},c}(x)\triangleq\begin{pmatrix}
				2\pi\mathbf{m}\big(a_{\mathbf{m}}(c)+c\big)+\tfrac{1}{4\pi\mathbf{m}a_{\mathbf{m}}(c)}\\
				\tfrac{1}{4\pi\mathbf{m}a_{\mathbf{m}}(c)}
			\end{pmatrix}\cos(2\pi \mathbf{m}x).
		\end{array}
	\end{equation}
	$\blacktriangleright$ \textbf{Range condition :} Notice that the monotonicity and the convergence of the sequence $\big(a_{j}(c)\big)_{j\geqslant N_1(c)}$ imply that $|c|\neq a_{\mathbf{m}}(c).$ So the Fredholmness property \eqref{Fredholmness} gives that the range $\mathcal{R}\Big(d_{(\check{r}_{+},\check{r}_{-})}G\big(a_{\mathbf{m}}(c),c,0,0\big)\Big)$ is closed and of codimension one in $Y_{\mathbf{m}}^{s-1,\sigma}.$ More precisely, arguying as in Section \ref{sec vel bif}, we find
	\begin{equation}\label{range-c}
		\begin{array}{l}
			\mathcal{R}\Big(d_{(\check{r}_{+},\check{r}_{-})}G\big(a_{\mathbf{m}}(c),c,0,0\big)\Big)=\Big(\mathtt{span}\big( g_{0,\mathbf{m},c}\big)\Big)^{\perp}=\ker\Big(g\mapsto\big\langle g,g_{0,\mathbf{m},c}\big\rangle\Big),\vspace{0.2cm}\\
			g_{0,\mathbf{m},c}(x)\triangleq\begin{pmatrix}
				2\pi\mathbf{m}\big(a_{\mathbf{m}}(c)+c\big)+\tfrac{1}{4\pi\mathbf{m}a_{\mathbf{m}}(c)}\\
				-\tfrac{1}{4\pi\mathbf{m}a_{\mathbf{m}}(c)}
			\end{pmatrix}\sin(2\pi\mathbf{m}x),
		\end{array} 
	\end{equation}
	where the orthogonal is understood in the sense of the scalar product defined in \eqref{scalar product per}.\\ $\blacktriangleright$ \textbf{Transversality condition :} From \eqref{lin op}-\eqref{lin op 2}, we have
	$$\partial_{a}d_{(\check{r}_+,\check{r}_-)}G(a,c,0,0)=\begin{pmatrix}
		\partial_{x} & 0\\
		0 & -\partial_{x}
	\end{pmatrix}+\tfrac{1}{2a^2}\begin{pmatrix}
		\partial_{x}^{-1} & -\partial_{x}^{-1}\\
		\partial_{x}^{-1} & -\partial_{x}^{-1}
	\end{pmatrix}.$$
	Hence, direct calculations using in particular \eqref{inv dx} give
	$$\partial_{a}d_{(\check{r}_{+},\check{r}_{-})}G\big(a_{\mathbf{m}}(c),c,0,0\big)[\check{r}_{0,\mathbf{m},c}]=h_{0,\mathbf{m},c}-2\pi\mathbf{m}\,g_{0,\mathbf{m},c},$$
	where
	$$h_{0,\mathbf{m},c}(x)\triangleq \frac{a_{\mathbf{m}}(c)+c}{2a_{\mathbf{m}}^{2}(c)}\begin{pmatrix}
		1\\
		1
	\end{pmatrix}\sin(2\pi\mathbf{m}x).$$
	Hence, one has the equivalence
	\begin{align}\label{cond range}
		&\partial_{a}d_{(\check{r}_{+},\check{r}_{-})}G\big(a_{\mathbf{m}}(c),c,0,0\big)[\check{r}_{0,\mathbf{m},c}]\in\mathcal{R}\Big(d_{(\check{r}_{+},\check{r}_{-})}G\big(a_{\mathbf{m}}(c),c,0,0\big)\Big)\nonumber\\
		\quad\Leftrightarrow\quad &2\pi\mathbf{m}\big\langle g_{0,\mathbf{m},c}\,,\,g_{0,\mathbf{m},c}\big\rangle=\big\langle h_{0,\mathbf{m},c}\,,\,g_{0,\mathbf{m},c}\big\rangle.
	\end{align}
	Now, on one hand, we have
	\begin{align*}
		\big\langle h_{0,\mathbf{m},c}\,,\,g_{0,\mathbf{m},c}\big\rangle&=\frac{a_{\mathbf{m}}(c)+c}{4a_{\mathbf{m}}^2(c)}\left\langle\begin{pmatrix}
			1\\
			1
		\end{pmatrix},\begin{pmatrix}
			2\pi\mathbf{m}\big(a_{\mathbf{m}}(c)+c\big)+\tfrac{1}{4\pi\mathbf{m}a_{\mathbf{m}}(c)}\\
			-\tfrac{1}{4\pi\mathbf{m}a_{\mathbf{m}}(c)}
		\end{pmatrix}\right\rangle_{\mathbb{R}^2}\\
		&=\frac{\pi\mathbf{m}\big(a_{\mathbf{m}}(c)+c\big)^2}{2a_{\mathbf{m}}^2(c)}\cdot
	\end{align*}
	On the other hand,
	\begin{align*}
		2\pi\mathbf{m}\big\langle g_{0,\mathbf{m},c}\,,\,g_{0,\mathbf{m},c}\big\rangle&=\pi\mathbf{m}\left[\left(2\pi\mathbf{m}\big(a_{\mathbf{m}}(c)+c\big)+\frac{1}{4\pi\mathbf{m}a_{\mathbf{m}}(c)}\right)^2+\left(\frac{1}{4\pi\mathbf{m}a_{\mathbf{m}}(c)}\right)^2\right]\\
		&=\frac{\pi\mathbf{m}}{a_{\mathbf{m}}^{2}(c)}\left[4\pi^{2}\mathbf{m}^2a_{\mathbf{m}}^{2}(c)\big(a_{\mathbf{m}}(c)+c\big)^2+a_{\mathbf{m}}(c)\big(a_{\mathbf{m}}(c)+c\big)+\frac{1}{8\pi^{2}\mathbf{m}^2}\right].
	\end{align*}
	Therefore, 
	\begin{align}\label{inter eq per a}
		\big\langle& h_{0,\mathbf{m},c}-2\pi\mathbf{m}g_{0,\mathbf{m},c}\,,\,g_{0,\mathbf{m},c}\big\rangle\nonumber\\
		&=\frac{\pi\mathbf{m}}{a_{\mathbf{m}}^{2}(c)}\left[\frac{1}{2}\big(a_{\mathbf{m}}(c)+c\big)^2-4\pi^{2}\mathbf{m}^2a_{\mathbf{m}}^{2}(c)\big(a_{\mathbf{m}}(c)+c\big)^2-a_{\mathbf{m}}(c)\big(a_{\mathbf{m}}(c)+c\big)-\frac{1}{8\pi^{2}\mathbf{m}^2}\right]\nonumber\\
		&=-\frac{\pi\mathbf{m}}{a_{\mathbf{m}}^{2}(c)}\left[4\pi^{2}\mathbf{m}^{2}a_{\mathbf{m}}^{2}(c)\big(a_{\mathbf{m}}(c)+c\big)^{2}+\frac{a_{\mathbf{m}}^{2}(c)}{2}-\frac{c^2}{2}+\frac{1}{8\pi^{2}\mathbf{m}^{2}}\right].
	\end{align}
	Now, the definition \eqref{def aj} implies
	$$\frac{a_{\mathbf{m}}^{2}(c)}{2}-\frac{c^2}{2}+\frac{1}{8\pi^{2}\mathbf{m}^{2}}=0.$$
	Plugging this identity into \eqref{inter eq per a} and using the fact that $a_{\mathbf{m}}(c)\neq|c|$, yields
	\begin{align}\label{sgn dadFag}
		\big\langle\partial_{a}d_{(\check{r}_{+},\check{r}_{-})}G\big(a_{\mathbf{m}}(c),c,0,0\big)[\check{r}_{0,\mathbf{m},c}]\,,\,g_{0,\mathbf{m},c}\big\rangle&=\big\langle h_{0,\mathbf{m},c}-2\pi\mathbf{m}\,g_{0,\mathbf{m},c}\,,\,g_{0,\mathbf{m},c}\big\rangle\nonumber\\
		&=-4\pi^3\mathbf{m}^3\big(c+a_{\mathbf{m}}(c)\big)^2<0.
	\end{align}
	Consequently, the relation \eqref{cond range} is not satisfied, that is
	\begin{equation}\label{transversality-c}
		\partial_{a}d_{(\check{r}_{+},\check{r}_{-})}G\big(a_{\mathbf{m}}(c),c,0,0\big)[\check{r}_{0,\mathbf{m},c}]\not\in\mathcal{R}\Big(d_{(\check{r}_{+},\check{r}_{-})}G\big(a_{\mathbf{m}}(c),c,0,0\big)\Big).
	\end{equation}
	Finally, \eqref{trivial line a-c}, \eqref{regularity F}, \eqref{Fredholmness}, \eqref{ker-c} and \eqref{transversality-c} allow to apply the Crandall-Rabinowitz Theorem \ref{thm CR+S} which concludes the existence part of Theorem \ref{thm E-states}-(iv). In the sequel, we denote
	$$\mathscr{C}_{\textnormal{\tiny{local}}}^{\mathbf{m}}(c):\mathtt{s}\in(-\delta,\delta)\mapsto \Big(a_{\mathbf{m}}(\mathtt{s},c)\,,\,\check{r}_{\mathbf{m}}(\mathtt{s},c)\Big)\in\mathbb{T}\times X_{\mathbf{m}}^{s,\sigma},\qquad\delta>0$$
	the corresponding (real-analytic) local curve which satisfies
	$$a_{\mathbf{m}}(0,c)=a_{\mathbf{m}}(c),\qquad\frac{d}{d\mathtt{s}}\Big(\check{r}_{\mathbf{m}}(\mathtt{s},c)\Big)\Big|_{\mathtt{s}=0}=\check{r}_{0,\mathbf{m},c}.$$
	$\blacktriangleright$ \textbf{Pitchfork-type bifurcation :} Proceeding as in subsection \ref{sec vel bif}, we can write
	$$d_{(\check{r}_{+},\check{r}_{-})}G\big(a_{\mathbf{m}}(c),c,0,0\big)[\theta_{0,\mathbf{m},c}]=d_{(\check{r}_+,\check{r}_-)}^{2}G\big(a_{\mathbf{m}}(c),c,0,0\big)[\check{r}_{0,\mathbf{m},c},\check{r}_{0,\mathbf{m},c}],$$
	with
	\begin{align*}
		&\theta_{0,\mathbf{m},c}(x)\triangleq2\pi\mathbf{m}\widetilde{M}_{2\mathbf{m}}^{-1}\big(a_{\mathbf{m}}(c),c\big)\begin{pmatrix}
			\Big(2\pi\mathbf{m}\big(c+a_{\mathbf{m}}(c)\big)+\tfrac{1}{4a_{\mathbf{m}}(c)\pi\mathbf{m}}\Big)^2\\
			\tfrac{1}{16a_{\mathbf{m}}^2(c)\pi^2\mathbf{m}^2}
		\end{pmatrix}\cos(4\pi\mathbf{m}x).
	\end{align*}
	Then, according to Theorem \ref{thm CR+S}, and \eqref{diff 2 and 3}, we have
	$$\frac{d}{d\mathtt{s}}\Big(a_{\mathbf{m}}(\mathtt{s},c)\Big)\Big|_{\mathtt{s}=0}=0$$
	and
	\begin{equation}\label{param curv diff01}
		\frac{d^2}{d\mathtt{s}^2}\Big(a_{\mathbf{m}}(\mathtt{s},c)\Big)\Big|_{\mathtt{s}=0}=\frac{\left\langle d_{(\check{r}_{+},\check{r}_{-})}^2G\big(a_{\mathbf{m}}(c),c,0,0\big)[\check{r}_{0,\mathbf{m},c},\theta_{0,\mathbf{m},c}]\,,\,g_{0,\mathbf{m},c}\right\rangle}{\left\langle\partial_c d_{(\check{r}_{+},\check{r}_{-})}G\big(a_{\mathbf{m}}(c),c,0,0\big)[\check{r}_{0,\mathbf{m},c}]\,,\,g_{0,\mathbf{m},c}\right\rangle}\cdot
	\end{equation}
	Proceeding as in the subsection \ref{sec vel bif}, we obtain
	\begin{equation}\label{d2rtga}
		\left\langle d_{(\check{r}_{+},\check{r}_{-})}^2G\big(a_{\mathbf{m}}(c),c,0,0\big)[\check{r}_{0,\mathbf{m},c},\theta_{0,\mathbf{m},c}]\,,\,g_{0,\mathbf{m},c}\right\rangle=\frac{\pi^2\mathbf{m}^2}{3}\mathtt{f}(\mathbf{m},c),
	\end{equation}
	where $\mathtt{f}$ is a well-defined continuous fonction on $D_{\mathtt{f}}\triangleq\{(z,c)\in[1,\infty)\times\mathbb{R}^*\quad\textnormal{s.t.}\quad z\geqslant N_1(c)\}$ given by
	$$\mathtt{f}(z,c)\triangleq\Big[\widetilde{\alpha}_{2z}\big(a_{z}(c),c\big)\widetilde{\gamma}_{z}^4\big(a_{z}(c)\big)+\widetilde{\beta}_{2z}\big(a_{z}(c),c\big)\widetilde{\beta}_{z}^4\big(a_{z}(c),c\big)-2\widetilde{\gamma}_{2z}\big(a_{z}(c)\big)\widetilde{\gamma}_{z}^2\big(a_{z}(c)\big)\widetilde{\beta}_{z}^2\big(a_{z}(c),c\big)\Big].$$
	We warn the reader about the change of notation for the coefficients, which explains the different shape of $\mathtt{f}$ compared to the other subsections. After tedious calculations, we can write
	\begin{equation}\label{exp fzc}
		\mathtt{f}(z,c)=\frac{1}{128\pi^5 z^5a_{z}^5(c)}\Big(C(z,c)a_z(c)+D(z,c)\Big),
	\end{equation}
	where
	\begin{align*}
		C(z,c)&\triangleq131072\pi^{10}z^{10}c^9-71680\pi^8z^8c^7+13056\pi^6z^6c^5-896\pi^4z^4c^3+16\pi^2z^2c,\\
		D(z,c)&\triangleq131072\pi^{10}z^{10}c^{10}-88064\pi^8z^8c^8+20992\pi^6z^6c^6-2096\pi^4z^4c^4+80\pi^2z^2c^2-1.
	\end{align*}
	Assume, for the sake of contradiction, that there exists $(z,c)\in D_{\mathtt{f}}$ such that $\mathtt{f}(z,c)=0.$ Then this equation is equivalent to
	$$a_z(c)=-\frac{D(z,a)}{C(z,a)}\cdot$$
	Taking the square of the previous expression and using \eqref{def aj}, we end up with
	\begin{align*}
		&1048576\pi^{12}z^{12}c^{12}+284928\pi^{8}z^{8}c^{8}+3168\pi^{4}z^{4}c^{4}+1=884736\pi^{10}z^{10}c^{10}+43520\pi^{6}z^{6}c^{6}+96\pi^{2}z^{2}c^{2}.
	\end{align*}
	Now by construction $z\geqslant N_1(c)$, which implies in particular $2\pi|c|z\geqslant1.$ Consequently, the previous equation cannot be satisfied and we obtain
	$$\forall(z,c)\in D_{\mathtt{f}},\quad \mathtt{f}(z,c)\neq 0.$$
	Besides, the convergence of $a_{z}(c)$ to $|c|$ when $z$ is large together with \eqref{exp fzc} provide the following asymptotics
	$$\forall c>0,\quad \mathtt{f}(z,c)\underset{z\to\infty}{\sim}2048\pi^{5}z^5c^5\qquad\textnormal{and}\qquad\forall c<0,\quad \mathtt{f}(z,c)\underset{z\to\infty}{\sim}-128\pi^{3}z^{3}|c|^{3}.$$
	Therefore, by a continuity argument, we obtain
	$$\forall(z,c)\in D_{\mathtt{f}},\quad \begin{cases}
		\mathtt{f}(z,c)>0, & \textnormal{if }c>0,\\
		\mathtt{f}(z,c)<0, & \textnormal{if }c<0.
	\end{cases}$$
	Added to \eqref{d2rtga} and \eqref{detsgna}, we obtain
	\begin{equation}\label{sgnd2Fag}
		\begin{cases}
			\left\langle d_{(\check{r}_{+},\check{r}_{-})}^2G\big(a_{\mathbf{m}}(c),c,0,0\big)[\check{r}_{0,\mathbf{m},c},\theta_{0,\mathbf{m},c}]\,,\,g_{0,\mathbf{m},c}\right\rangle>0, & \textnormal{if }c>0,\vspace{0.2cm}\\
			\left\langle d_{(\check{r}_{+},\check{r}_{-})}^2G\big(a_{\mathbf{m}}(c),c,0,0\big)[\check{r}_{0,\mathbf{m},c},\theta_{0,\mathbf{m},c}]\,,\,g_{0,\mathbf{m},c}\right\rangle<0, & \textnormal{if }c<0.
		\end{cases}
	\end{equation}
	Plugging \eqref{sgn dadFag} and \eqref{sgnd2Fag} into \eqref{param curv diff01} yields
	$$\forall c>0,\quad\frac{d^2}{d\mathtt{s}^2}\Big(a_{\mathbf{m}}(\mathtt{s},c)\Big)\Big|_{\mathtt{s}=0}<0\qquad\textnormal{and}\qquad\forall c<0,\quad\frac{d^2}{d\mathtt{s}^2}\Big(a_{\mathbf{m}}(\mathtt{s},c)\Big)\Big|_{\mathtt{s}=0}>0.$$
	This ends the proof of Theorem \ref{thm E-states}-(iv).
	\section{Large amplitude solutions}
	The scope of this section is to implement the analytic global bifurcation Theorem \ref{thm BT} in order to continue the local branches constructed in the previous section. We first discuss some qualitative properties for a solution and then implement the global bifurcation theory.
	\subsection{Qualitative properties of generic solutions}\label{sec quali}
	Let us fix $(a,b,c)\in\mathbb{S}\times\mathbb{R}$ and consider $(\check{r}_+,\check{r}_-)$ a real-analytic non-trivial solution to the system \eqref{system rpm per}. Notice that, in order to have an electron layer, we must have
	$$\forall(t,x)\in\mathbb{R}_+\times\mathbb{T},\quad v_+(t,x)>v_-(t,x),$$
	which is equivalent to
	\begin{equation}\label{electron layer constraint}
		\forall x\in\mathbb{T},\quad\check{r}_+(x)+b-c>\check{r}_-(x)+a-c.
	\end{equation}
		Substracting both equations in \eqref{system rpm per} yields
		$$\big(\check{r}_{-}(x)+a-c\big)\partial_{x}\check{r}_-(x)=\big(\check{r}_{+}(x)+b-c\big)\partial_{x}\check{r}_+(x).$$
		From the previous two relations, we deduce that for any $\overline{x},\underline{x}\in\mathbb{T},$ we have
		\begin{align*}
			&\check{r}_+(\overline{x})+b-c=0\quad\Rightarrow\quad \Big(\check{r}_-(\overline{x})+a-c<0\quad\textnormal{ and }\quad\partial_{x}\check{r}_-(\overline{x})=0\Big),\\
			&\check{r}_-(\underline{x})+a-c=0\quad\Rightarrow\quad \Big(\check{r}_+(\underline{x})+b-c>0\quad\textnormal{ and }\quad\partial_{x}\check{r}_+(\underline{x})=0\Big).
		\end{align*}
	Let us assume that there exists $\overline{x}\in\mathbb{T}$ such that $\check{r}_+(\overline{x})+b-c=0$ and consider the following integers (finite by real-analyticity and zero average condition) 
	\begin{align*}
		\overline{q}_0^{+}&\triangleq\min\big\{q\in\mathbb{N}^*\quad\textnormal{s.t.}\quad\partial_{x}^{q}\check{r}_+(\overline{x})\neq 0\big\}\in\mathbb{N}^*,\\
		\overline{q}_0^{-}&\triangleq\min\big\{q\in\mathbb{N}^*\quad\textnormal{s.t.}\quad\partial_{x}^{q}\check{r}_-(\overline{x})\neq 0\big\}\in\mathbb{N}\setminus\{0,1\}.
	\end{align*}
Using Leibniz rule, we obtain for any $q\in\mathbb{N},$
\begin{equation}\label{system diff}
	\begin{cases}
		\big(\check{r}_+(x)+b-c\big)\partial_{x}^{q+1}\check{r}_+(x)+\displaystyle\sum_{k=1}^{q}\binom{q}{k}\partial_{x}^{k}\check{r}_+(x)\partial_{x}^{q-k+1}\check{r}_+(x)-\tfrac{1}{b-a}\partial_{x}^{q-1}\big(\check{r}_+(x)-\check{r}_-(x)\big)=0,\\
		\big(\check{r}_-(x)+a-c\big)\partial_{x}^{q+1}\check{r}_-(x)+\displaystyle\sum_{k=1}^{q}\binom{q}{k}\partial_{x}^{k}\check{r}_-(x)\partial_{x}^{q-k+1}\check{r}_-(x)-\tfrac{1}{b-a}\partial_{x}^{q-1}\big(\check{r}_+(x)-\check{r}_-(x)\big)=0.
	\end{cases}
\end{equation}
Substracting both equations in \eqref{system diff}, we obtain 
$$\forall q\in\mathbb{N}^*,\quad\partial_{x}^{q+1}\check{r}_-(\overline{x})=\frac{1}{\check{r}_-(\overline{x})+a-c}\sum_{k=1}^{q}\binom{q}{k}\Big[\partial_{x}^{k}\check{r}_+(\overline{x})\partial_{x}^{q-k+1}\check{r}_+(\overline{x})-\partial_{x}^{k}\check{r}_-(\overline{x})\partial_{x}^{q-k+1}\check{r}_-(\overline{x})\Big].$$
From this, we readily infer
$$\overline{q}_0^-=\begin{cases}
	\overline{q}_0^++1, & \textnormal{if }\overline{q}_0^+\equiv1[2],\\
	\overline{q}_0^++2, & \textnormal{if }\overline{q}_0^+\equiv0[2].
\end{cases}$$
By a similar argument, for any $\underline{x}\in\mathbb{T}$ with $\check{r}_-(\underline{x})+a-c=0,$ then denoting
\begin{align*}
	\underline{q}_0^{+}&\triangleq\min\big\{q\in\mathbb{N}^*\quad\textnormal{s.t.}\quad\partial_{x}^{q}\check{r}_+(\underline{x})\neq 0\big\}\in\mathbb{N}\setminus\{0,1\},\\
	\underline{q}_0^{-}&\triangleq\min\big\{q\in\mathbb{N}^*\quad\textnormal{s.t.}\quad\partial_{x}^{q}\check{r}_-(\underline{x})\neq 0\big\}\in\mathbb{N}^*,
\end{align*}
we have
$$\underline{q}_0^+=\begin{cases}
	\underline{q}_0^-+1, & \textnormal{if }\underline{q}_0^-\equiv1[2],\\
	\underline{q}_0^-+2, & \textnormal{if }\underline{q}_0^-\equiv0[2].
\end{cases}$$
	\subsection{Global bifurcation}
	Now, we prove the Theorem \ref{thm global E-states}. We denote
	$$\mathtt{m}(a,b)\triangleq\min_{x\in\mathbb{T}}\big|\check{r}_+(x)-\check{r}_{-}(x)+b-a\big|,\qquad\mathtt{m}_{+}(b,c)\triangleq\min_{x\in\mathbb{T}}\big|\check{r}_+(x)+b-c\big|,\qquad\mathtt{m}_{-}(a,c)\triangleq\min_{x\in\mathbb{T}}\big|\check{r}_-(x)+a-c\big|.$$
	Due to the similarity of the argument, we shall mainly focus on proving the global bifurcation from the velocity parameter $c$ when the other parameters $a$ and $b$ are fixed. We introduce the following open set
	$$U(a,b)\triangleq\Big\{(c,\check{r}_+,\check{r}_-)\in\mathbb{R}\times X_{\mathbf{m}}^{s,\sigma}\quad\textnormal{s.t.}\quad\min\big(\mathtt{m}(a,b),\mathtt{m}_{+}(b,c),\mathtt{m}_{-}(a,c)\big)>0\Big\}.$$
	In the sequel, we shall denote for any $r>0$
	$$B_{\mathbf{m}}^{s,\sigma}(r)\triangleq \Big\{f\in X_{\mathbf{m}}^{s,\sigma}\quad\textnormal{s.t.}\quad\|f\|_{s,\sigma}\leqslant r\Big\}.$$
	Let us consider the following closed and bounded set defined for any $n\in\mathbb{N}^*$ by
	$$F_{n}(a,b)\triangleq\Big\{(c,\check{r}_+,\check{r}_-)\in[-n,n]\times B_{\mathbf{m}}^{s,\sigma}(n)\quad\textnormal{s.t.}\quad\min\big(\mathtt{m}(a,b),\mathtt{m}_{+}(b,c),\mathtt{m}_{-}(a,c)\big)\geqslant\tfrac{1}{n}\Big\}.$$
	Obviously, one has
	$$U(a,b)=\bigcup_{n\in\mathbb{N}^*}F_n(a,b).$$
	We denote
	$$\mathscr{S}_{n}(a,b)\triangleq\Big\{(c,\check{r}_+,\check{r}_-)\in F_n(a,b)\quad\textnormal{s.t.}\quad F(a,b,c,\check{r}_+,\check{r}_-)=0\Big\}.$$
	\begin{lem}\label{lem compactness global bif}
		Let $a<b$, $s>\tfrac{3}{2}$ and $\sigma>0.$ The following properties hold true.
		\begin{enumerate}[label=(\roman*)]
			\item We have the inclusion $\mathscr{C}_{\textnormal{\tiny{local}}}^{\pm,\mathbf{m}}(a,b)\subset U(a,b).$
			\item For any $(c,\check{r}_+,\check{r}_-)\in U(a,b)$ with $F(a,b,c,\check{r}_+,\check{r}_-)=0,$ the operator $d_{(\check{r}_+,\check{r}_-)}F(a,b,c,\check{r}_+,\check{r}_-):X_{\mathbf{m}}^{s,\sigma}\rightarrow Y_{\mathbf{m}}^{s-1,\sigma}$ is Fredholm with index zero.
			\item For any $n\in\mathbb{N}^*$, the set $\mathscr{S}_n(a,b)$ is compact in $\mathbb{R}\times X_{\mathbf{m}}^{s,\sigma}.$
		\end{enumerate}
	\end{lem}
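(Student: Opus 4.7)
For item (i), I would simply check that the three non-degeneracy quantities are strictly positive at the bifurcation point $(c_{\mathbf{m}}^{\pm}(a,b),0,0)$ and then invoke the openness of $U(a,b)$ together with the continuity of the parametrization $\mathtt{s}\mapsto(c_{\mathbf{m}}^{\pm}(\mathtt{s},a,b),\check{r}_{\mathbf{m}}^{\pm}(\mathtt{s},a,b))$. The equality $\mathtt{m}(a,b)=b-a>0$ is immediate, while the strict inequalities $c_{\mathbf{m}}^{-}(a,b)<a<b<c_{\mathbf{m}}^{+}(a,b)$, which come from the monotonicity and limits of the sequences $\big(c_{j}^{\pm}(a,b)\big)_{j}$ used in the velocity bifurcation analysis, yield the positivity of $\mathtt{m}_{\pm}(\cdot,c_{\mathbf{m}}^{\pm}(a,b))$. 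Shrinking $\delta$ then traps the whole local branch in $U(a,b)$.

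For item (ii), the plan is to split the linearized operator in divergence form via a Leibniz regrouping as $L_{0}+K$ with
\begin{align*}
L_{0}[h_{+},h_{-}]\triangleq\begin{pmatrix} \partial_{x}\big((\check{r}_{+}+b-c)h_{+}\big) \\ \partial_{x}\big((\check{r}_{-}+a-c)h_{-}\big) \end{pmatrix},\qquad K[h_{+},h_{-}]\triangleq\tfrac{1}{b-a}\begin{pmatrix} -\partial_{x}^{-1}h_{+}+\partial_{x}^{-1}h_{-} \\ -\partial_{x}^{-1}h_{+}+\partial_{x}^{-1}h_{-} \end{pmatrix}.
\end{align*}
The multipliers $\check{r}_{+}+b-c$ and $\check{r}_{-}+a-c$ are even, real-analytic functions of constant sign on $\mathbb{T}$ by the very definition of $U(a,b)$, so multiplication by them and by their reciprocals is a continuous operation on the Sobolev-analytic scale. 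Inverting $L_{0}$ then reduces to two scalar problems $\partial_{x}\big((\check{r}_{\pm}+\epsilon_{\pm}-c)h_{\pm}\big)=g_{\pm}$, with $\epsilon_{+}=b$ and $\epsilon_{-}=a$: the integration constant is uniquely fixed by the zero-mean constraint on $h_{\pm}$, using that $\int_{0}^{1}\tfrac{dx}{\check{r}_{\pm}+\epsilon_{\pm}-c}$ has a definite sign. Thus $L_{0}:X_{\mathbf{m}}^{s,\sigma}\to Y_{\mathbf{m}}^{s-1,\sigma}$ is an isomorphism. Meanwhile $K$ factors as a continuous map $X_{\mathbf{m}}^{s,\sigma}\to Y_{\mathbf{m}}^{s+1,\sigma}$ composed with the Rellich-type compact embedding $Y_{\mathbf{m}}^{s+1,\sigma}\hookrightarrow Y_{\mathbf{m}}^{s-1,\sigma}$ (a standard tail argument on Fourier series), so $K$ is compact and $L_{0}+K$ is Fredholm of index zero.

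For item (iii), I would extract along a sequence $(c_{k},\check{r}_{+,k},\check{r}_{-,k})$ in $\mathscr{S}_{n}(a,b)$ first $c_{k}\to c^{\star}\in[-n,n]$, and then, by Rellich compactness $X_{\mathbf{m}}^{s,\sigma}\hookrightarrow X_{\mathbf{m}}^{s-\varepsilon,\sigma}$, a subsequence $\check{r}_{\pm,k}\to\check{r}_{\pm}^{\star}$ strongly in $X_{\mathbf{m}}^{s-\varepsilon,\sigma}$ and weakly in $X_{\mathbf{m}}^{s,\sigma}$. Choosing $\varepsilon\in\big(0,s-\tfrac{3}{2}\big)$ ensures $X_{\mathbf{m}}^{s-\varepsilon,\sigma}\hookrightarrow C^{1}(\mathbb{T})$, so the pointwise inequalities defining $F_{n}(a,b)$ and the equation $F=0$ both pass to the limit, giving $(c^{\star},\check{r}_{+}^{\star},\check{r}_{-}^{\star})\in F_{n}(a,b)$ with $F=0$. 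To upgrade to strong $X_{\mathbf{m}}^{s,\sigma}$-convergence, I would solve the equation for the derivative,
\begin{align*}
\partial_{x}\check{r}_{\pm,k}=\tfrac{1}{(b-a)(\check{r}_{\pm,k}+\epsilon_{\pm}-c_{k})}\big(\partial_{x}^{-1}\check{r}_{+,k}-\partial_{x}^{-1}\check{r}_{-,k}\big),\qquad\epsilon_{+}=b,\ \epsilon_{-}=a,
\end{align*}
where the divisor is uniformly bounded below by $1/n$ on $F_{n}(a,b)$. The numerator converges strongly in $Y_{\mathbf{m}}^{s,\sigma}$ thanks to the smoothing effect of $\partial_{x}^{-1}$ combined with Rellich compactness, and the product and composition laws on the Sobolev-analytic scale transfer this strong convergence first to the right-hand side and then to $\partial_{x}\check{r}_{\pm,k}$ in $Y_{\mathbf{m}}^{s-1,\sigma}$, hence to $\check{r}_{\pm,k}$ in $X_{\mathbf{m}}^{s,\sigma}$. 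The main obstacle will be this bootstrap: it works only because of the uniform positivity of the divisors encoded in the definition of $F_{n}(a,b)$, which is also exactly what makes the Fredholm decomposition of item (ii) go through.
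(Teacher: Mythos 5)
Your proposal is correct, and for items (i) and (iii) it follows essentially the same route as the paper: shrink $\delta$ using the continuity of the local curve and the strict inequalities $c_{\mathbf{m}}^{-}(a,b)<a<b<c_{\mathbf{m}}^{+}(a,b)$ for (i); and for (iii), extract $c^{[k]}$ by Bolzano--Weierstrass, get strong convergence in $X_{\mathbf{m}}^{s',\sigma}$ for some $\tfrac32<s'<s$ by compact embedding, pass to the limit in $F=0$, then bootstrap to strong $X_{\mathbf{m}}^{s,\sigma}$-convergence by solving the equation for $\partial_x\check{r}_{\pm}$ and exploiting the uniform lower bound $1/n$ on the divisors together with the smoothing of $\partial_x^{-1}$ — this is exactly the paper's Cauchy-sequence argument. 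The one place you genuinely diverge is item (ii): you regroup the linearization in divergence form, $h_\pm\mapsto\partial_x\big((\check{r}_\pm+\epsilon_\pm-c)h_\pm\big)$, so that the compact remainder is only the fixed nonlocal operator $K_{(0,0)}$, whereas the paper keeps the non-divergence principal part $\mathrm{diag}\big((\check{r}_++b-c)\partial_x,(\check{r}_-+a-c)\partial_x\big)$ and shows separately that the multiplication operator $\mathrm{diag}(\partial_x\check{r}_+,\partial_x\check{r}_-)$ is compact (via the product law for $s>\tfrac32$ composed with the compact embedding $X_{\mathbf{m}}^{s,\sigma}\hookrightarrow X_{\mathbf{m}}^{s-1,\sigma}$). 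Your variant buys a smaller compact part at the price of inverting a variable-coefficient first-order operator, where you correctly identify that the integration constant is pinned down by the zero-mean (equivalently, parity) structure of the spaces; the paper's variant inverts the simpler operator $f\partial_x$ (parity alone makes $g_\pm/f_\pm$ mean-free, so no constant arises) but must do one extra compactness check. Both are valid decompositions into isomorphism plus compact, and both rest on the same non-vanishing of $\check{r}_\pm+\epsilon_\pm-c$ encoded in $U(a,b)$.
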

\begin{proof}
	\textbf{(i)} Let $\kappa\in\{+,-\}.$ Since $a<b$ and $c_{\mathbf{m}}^{\kappa}(a,b)\not\in\{a,b\},$ then, up to taking $\delta$ small enough we get
	\begin{align*}
		&\left|\big(\check{r}_{\mathbf{m}}^{\kappa}(\mathtt{s},a,b)\big)_+(x)-\big(\check{r}_{\mathbf{m}}^{\kappa}(\mathtt{s},a,b)\big)_-(x)+b-a\right|\geqslant\left|b-a\right|-C\delta>0,\\
		&\left|\big(\check{r}_{\mathbf{m}}^{\kappa}(\mathtt{s},a,b)\big)_+(x)+b-c_{\mathbf{m}}^{\kappa}(\mathtt{s},a,b)\right|\geqslant\left|b-c_{\mathbf{m}}^{\kappa}(a,b)\right|-C\delta>0,\\
		&\left|\big(\check{r}_{\mathbf{m}}^{\kappa}(\mathtt{s},a,b)\big)_-(x)+a-c_{\mathbf{m}}^{\kappa}(\mathtt{s},a,b)\right|\geqslant \left|a-c_{\mathbf{m}}^{\kappa}(a,b)\right|-C\delta>0.
	\end{align*}
This proves the inclusion $\mathscr{C}_{\textnormal{\tiny{local}}}^{\kappa,\mathbf{m}}(a,b)\subset U(a,b).$\\
	\textbf{(ii)} Let $(c,\check{r}_+,\check{r}_-)\in U(a,b)$ with $F(a,b,c,\check{r}_+,\check{r}_-)=0.$ Differentiating \eqref{system rpm per}, we can write
	$$d_{(\check{r}_+,\check{r}_-)}F(a,b,c,\check{r}_+,\check{r}_-)=I_{(\check{r}_+,\check{r}_-)}+K_{(\check{r}_+,\check{r}_-)},$$
	where
	$$I_{(\check{r}_+,\check{r}_-)}\triangleq\begin{pmatrix}
		(\check{r}_++b-c)\partial_{x} & 0\\
		0 & (\check{r}_-+a-c)\partial_{x}
	\end{pmatrix}$$
and
$$K_{(\check{r}_+,\check{r}_-)}\triangleq M_{(\check{r}_+,\check{r}_-)}+K_{(0,0)},\qquad M_{(\check{r}_+,\check{r}_-)}\triangleq\begin{pmatrix}
	\partial_{x}\check{r}_+ & 0\\
	0 & \partial_x\check{r}_-
\end{pmatrix}.$$
Since $(c,\check{r}_+,\check{r}_-)\in U(a,b)$, then in particular
$$\forall x\in\mathbb{T},\quad\check{r}_+(x)+b-c\neq0\qquad\textnormal{and}\qquad\check{r}_-(x)+a-c\neq0.$$
This implies that the operator $I_{(\check{r}_+,\check{r}_-)}:X_{\mathbf{m}}^{s,\sigma}\rightarrow Y_{\mathbf{m}}^{s-1,\sigma}$ is an isomorphism. Now, recall that the compactness of $K_{(0,0)}:X_{\mathbf{m}}^{s,\sigma}\rightarrow Y_{\mathbf{m}}^{s-1,\sigma}$ has already been proved at the beginning of Section \ref{sec local}. Let us now prove the compactness of $M_{(\check{r}_+,\check{r}_-)}:X_{\mathbf{m}}^{s,\sigma}\rightarrow Y_{\mathbf{m}}^{s-1,\sigma}.$ For this aim, consider $\left(x_+^{[k]},x_-^{[k]}\right)_{k\in\mathbb{N}}$ bounded in $X_{\mathbf{m}}^{s,\sigma}.$ There exist $(k_m)_{m\in\mathbb{N}}\in\mathbb{N}^{\mathbb{N}}$ increasing and $\left(x_{+}^{[\infty]},x_{-}^{[\infty]}\right)\in X_{\mathbf{m}}^{s-1,\sigma}$ such that
$$\left(x_{+}^{[k_m]},x_{-}^{[k_m]}\right)\underset{m\to\infty}{\longrightarrow}\left(x_{+}^{[\infty]},x_{-}^{[\infty]}\right)\quad \textnormal{in } X_{\mathbf{m}}^{s-1,\sigma}.$$
We denote, for any $m\in\mathbb{N},$
$$\left(y_{+}^{[m]},y_{-}^{[m]}\right)\triangleq M_{(\check{r}_+,\check{r}_-)}\left(x_{+}^{[k_m]},x_{-}^{[k_m]}\right)=\left(\partial_{x}\check{r}_+\,x_{+}^{[k_m]},\partial_{x}\check{r}_-\,x_{-}^{[k_m]}\right)\in Y_{\mathbf{m}}^{s-1,\sigma}.$$
Our purpose is to prove the convergence of the sequence $\left(y_{+}^{[m]},y_{-}^{[m]}\right)_{m\in\mathbb{N}}$ in $Y_{\mathbf{m}}^{s-1,\sigma}.$ Let $p,q\in\mathbb{N}$, then using that $s>\tfrac{3}{2}$, we have
\begin{align*}
	\left\|y_{\pm}^{[p]}-y_{\pm}^{[q]}\right\|_{s-1,\sigma}&=\left\|\partial_{x}r_{\pm}\left(x_{\pm}^{[p]}-x_{\pm}^{[q]}\right)\right\|_{s-1,\sigma}\\
	&\lesssim\left\|\check{r}_{\pm}\right\|_{s,\sigma}\left\|x_{\pm}^{[k_p]}-x_{\pm}^{[k_q]}\right\|_{s-1,\sigma}.
\end{align*}
Since the sequence $\left(x_{+}^{[k_m]},x_{-}^{[k_m]}\right)_{m\in\mathbb{N}}$ is convergent in $X_{\mathbf{m}}^{s-1,\sigma},$ then it is of Cauchy-type in $X_{\mathbf{m}}^{s-1,\sigma}.$ We deduce that the sequence $\left(y_{+}^{[m]},y_{-}^{[m]}\right)_{m\in\mathbb{N}}$ is of Cauchy-type (thus convergent) in the Banach space $Y_{\mathbf{m}}^{s-1,\sigma}.$ Consequently, the operator $d_{(\check{r}_+,\check{r}_-)}F(a,b,c,\check{r}_+,\check{r}_-):X_{\mathbf{m}}^{s,\sigma}\rightarrow Y_{\mathbf{m}}^{s-1,\sigma}$ is a compact perturbation of an isomorphism. Therefore, it is a Fredholm operator with index zero.\\
	\textbf{(iii)} Let $\left(c^{[k]},\check{r}_+^{[k]},\check{r}_-^{[k]}\right)_{k\in\mathbb{N}}\in\big(\mathscr{S}_n(a,b)\big)^{\mathbb{N}}.$ Since $\big(c^{[k]}\big)_{k\in\mathbb{N}}\in[-n,n]^{\mathbb{N}}$, then by Bolzano-Weierstrass Theorem, there exists $c^{[\infty]}\in[-n,n]$ and a subsequence $(k_m)_{m\in\mathbb{N}}$ such that
	$$\lim_{m\to\infty}c^{[k_m]}=c^{[\infty]}.$$
	Since $\left(\check{r}_+^{[k]},\check{r}_-^{[k]}\right)_{k\in\mathbb{N}}$ is bounded in $X_{\mathbf{m}}^{s,\sigma},$ then there exists $\left(\check{r}_+^{[\infty]},\check{r}_-^{[\infty]}\right)\in X_{\mathbf{m}}^{s,\sigma}$ such that, up to an other extraction,
	$$\left(\check{r}_+^{[k_m]},\check{r}_-^{[k_m]}\right)\underset{m\to\infty}{\rightharpoonup}\left(\check{r}_+^{[\infty]},\check{r}_-^{[\infty]}\right)\quad\textnormal{in }X_{\mathbf{m}}^{s,\sigma}$$
	and
	$$\forall\, \tfrac{3}{2}<s'<s,\quad\left(\check{r}_+^{[k_m]},\check{r}_-^{[k_m]}\right)\underset{m\to\infty}{\longrightarrow}\left(\check{r}_+^{[\infty]},\check{r}_-^{[\infty]}\right)\quad\textnormal{in }X_{\mathbf{m}}^{s',\sigma}.$$
	Since, $s'>\tfrac{3}{2},$ by pointwise convergence, we obtain
	$$F\left(a,b,c^{[\infty]},\check{r}_+^{[\infty]},\check{r}_-^{[\infty]}\right)=0.$$
	We shall now prove that the sequence $\left(\check{r}_+^{[k_m]},\check{r}_-^{[k_m]}\right)_{m\in\mathbb{N}}$ is a Cauchy sequence (and thus convergent) in the Banach space $X_{\mathbf{m}}^{s,\sigma}.$ Let $p,q\in\mathbb{N}$, since 
	$$F\left(a,b,c^{[k_p]},\check{r}_+^{[k_p]},\check{r}_-^{[k_p]}\right)=0=F\left(a,b,c^{[k_q]},\check{r}_+^{[k_q]},\check{r}_-^{[k_q]}\right),$$
	then substracting, we obtain
	$$\partial_{x}\left(\check{r}_{+}^{[k_p]}-\check{r}_+^{[k_q]}\right)=\mathcal{I}_{1}^{\,+,p,q}+\mathcal{I}_{2}^{\,+,p,q},\qquad\partial_{x}\left(\check{r}_{-}^{[k_p]}-\check{r}_-^{[k_q]}\right)=\mathcal{I}_{1}^{\,-,p,q}+\mathcal{I}_{2}^{\,-,p,q},$$
	where
	$$\begin{array}{ll}
		\mathcal{I}_{1}^{\,+,p,q}\triangleq\frac{\left(c^{[k_p]}-c^{[k_q]}+\check{r}_+^{[k_q]}-\check{r}_+^{[k_p]}\right)\partial_{x}\check{r}_+^{[k_q]}}{r_+^{[k_p]}+b-c^{[k_p]}},\hspace{1cm} & \mathcal{I}_{1}^{\,-,p,q}\triangleq\frac{\left(c^{[k_p]}-c^{[k_q]}+\check{r}_-^{[k_q]}-\check{r}_-^{[k_p]}\right)\partial_{x}\check{r}_-^{[k_q]}}{r_-^{[k_p]}+a-c^{[k_p]}},\vspace{0.2cm}\\
		\mathcal{I}_{2}^{\,+,p,q}\triangleq \frac{\partial_{x}^{-1}\left(\check{r}_+^{[k_p]}-\check{r}_+^{[k_q]}-\check{r}_-^{[k_p]}+\check{r}_-^{[k_q]}\right)}{(b-a)\left(r_+^{[k_p]}+b-c^{[k_p]}\right)}, & \mathcal{I}_{2}^{\,-,p,q}\triangleq \frac{\partial_{x}^{-1}\left(\check{r}_+^{[k_p]}-\check{r}_+^{[k_q]}-\check{r}_-^{[k_p]}+\check{r}_-^{[k_q]}\right)}{(b-a)\left(r_+^{[k_p]}+a-c^{[k_p]}\right)}\cdot
	\end{array}$$
Since 
$$\forall p\in\mathbb{N},\quad\min\left(\min_{x\in\mathbb{T}}\left|\check{r}_+^{[k_p]}(x)+b-c^{[k_p]}\right|\,,\,\min_{x\in\mathbb{T}}\left|\check{r}_-^{[k_p]}(x)+a-c^{[k_p]}\right|\right)\geqslant\tfrac{1}{n},$$
then
	\begin{align*}
		\left\|\mathcal{I}_{1}^{\,\pm,p,q}\right\|_{s-1,\sigma}&\lesssim_{n,a,b}\left(\left|c^{[k_p]}-c^{[k_q]}\right|+\left\|\check{r}_{\pm}^{[k_p]}-\check{r}_{\pm}^{[k_q]}\right\|_{s-1,\sigma}\right)\left\|\partial_{x}\check{r}_{\pm}^{[k_q]}\right\|_{s-1,\sigma}\\
		&\lesssim_{n,a,b} \left(\left|c^{[k_p]}-c^{[k_q]}\right|+\left\|\check{r}_{\pm}^{[k_p]}-\check{r}_{\pm}^{[k_q]}\right\|_{s',\sigma}\right)\left\|\check{r}_{\pm}^{[k_q]}\right\|_{s,\sigma}\\
		&\lesssim_{n,a,b}\left(\left|c^{[k_p]}-c^{[k_q]}\right|+\left\|\check{r}_{\pm}^{[k_p]}-\check{r}_{\pm}^{[k_q]}\right\|_{s',\sigma}\right)
	\end{align*}
and
	\begin{align*}
		\left\|\mathcal{I}_{2}^{\,\pm,p,q}\right\|_{s-1,\sigma}&\lesssim_{n,a,b}\left\|\partial_{x}^{-1}\left(\check{r}_+^{[k_p]}-\check{r}_+^{[k_q]}+\check{r}_-^{[k_p]}-\check{r}_-^{[k_q]}\right)\right\|_{s-1,\sigma}\\
		&\lesssim_{n,a,b}\left(\left\|\check{r}_+^{[k_p]}-\check{r}_+^{[k_q]}\right\|_{s',\sigma}+\left\|\check{r}_-^{[k_p]}-\check{r}_-^{[k_q]}\right\|_{s',\sigma}\right).
	\end{align*}
	Since the sequences $\big(c^{[k_m]}\big)_{m\in\mathbb{N}}$ and $\big(\check{r}_+^{[k_m]},\check{r}_-^{[k_m]}\big)$ and convergent in $\mathbb{R}$ are $X_{\mathbf{m}}^{s',\sigma}$ respectively, they are in particular of Cauchy-type in the corresponding spaces. This gives the desired result, i.e.
	$$\left(\check{r}_+^{[k_m]},\check{r}_-^{[k_m]}\right)\underset{m\to\infty}{\longrightarrow}\left(\check{r}_+^{[\infty]},\check{r}_-^{[\infty]}\right)\quad\textnormal{in }X_{\mathbf{m}}^{s,\sigma}.$$
	Thus, for any $n\in\mathbb{N}^*$, the set $\mathscr{S}_n(a,b)$ is compact in $\mathbb{R}\times X_{\mathbf{m}}^{s,\sigma}.$ This ends the proof of Lemma \ref{lem compactness global bif}.
\end{proof}
Now, we can conclude.
\begin{proof}[Proof of Theorem \ref{thm global E-states}-(i)] Let $\kappa\in\{+,-\}.$ The Lemma \ref{lem compactness global bif} allows to apply the Theorem \ref{thm BT} which provides the existence of a global continuation curve $\mathscr{C}_{\textnormal{\tiny{global}}}^{\kappa,\mathbf{m}}(a,b)$ satisfying
	$$\mathscr{C}_{\textnormal{\tiny{local}}}^{\kappa,\mathbf{m}}(a,b)\subset\mathscr{C}_{\textnormal{\tiny{global}}}^{\kappa,\mathbf{m}}(a,b)\triangleq\Big\{\big(c_{\mathbf{m}}^{\kappa}(\mathtt{s},a,b),\check{r}_{\mathbf{m}}^{\kappa}(\mathtt{s},a,b)\big),\quad\mathtt{s}\in\mathbb{R}\Big\}\subset U(a,b)\cap F(a,b,\cdot,\cdot,\cdot)^{-1}\big(\{0\}\big).$$
	Moreover, the curve $\mathscr{C}_{\textnormal{\tiny{global}}}^{\kappa,\mathbf{m}}(a,b)$ admits locally around each of its points a real-analytic reparametrization. In addition, one of the following alternatives occurs
	\begin{itemize}
		\item [$(A1)$] There exists $T_{\mathbf{m}}^{\kappa}(a,b)>0$ such that 
		$$\forall\mathtt{s}\in\mathbb{R},\quad c_{\mathbf{m}}^{\kappa}\big(\mathtt{s}+T_{\mathbf{m}}^{\kappa}(a,b),a,b\big)=c_{\mathbf{m}}^{\kappa}(\mathtt{s},a,b)\qquad\textnormal{and}\qquad \check{r}_{\mathbf{m}}^{\pm}\big(\mathtt{s}+T_{\mathbf{m}}^{\kappa}(a,b),a,b\big)=\check{r}_{\mathbf{m}}^{\pm}(\mathtt{s},a,b).$$
		\item [$(A2)$] One one the following limits holds (possibly simultaneously)
		\begin{enumerate}[label=\textbullet]
			\item (Blow-up) $\displaystyle\lim_{\mathtt{s}\to\pm\infty}\frac{1}{1+\left|c_{\mathbf{m}}^{\kappa}(\mathtt{s},a,b)\right|+\|\check{r}_{\mathbf{m}}^{\kappa}(\mathtt{s},a,b)\|_{s,\sigma}}=0.$
			\item (Collision of the boundaries) $\displaystyle\lim_{\mathtt{s}\to\pm\infty}\min_{x\in\mathbb{T}}\left|\big(\check{r}_{\mathbf{m}}^{\kappa}\big)_{+}(\mathtt{s},a,b)(x)-\big(\check{r}_{\mathbf{m}}^{\kappa}\big)_{-}(\mathtt{s},a,b)(x)+b-a\right|=0.$
			\item (Degeneracy $+$) $\displaystyle\lim_{\mathtt{s}\to\pm\infty}\min_{x\in\mathbb{T}}\left|\big(\check{r}_{\mathbf{m}}^{\kappa}\big)_{+}(\mathtt{s},a,b)(x)+b-c_{\mathbf{m}}^{\kappa}(\mathtt{s},a,b)\right|=0.$
			\item (Degeneracy $-$) $\displaystyle\lim_{\mathtt{s}\to\pm\infty}\min_{x\in\mathbb{T}}\left|\big(\check{r}_{\mathbf{m}}^{\kappa}\big)_{-}(\mathtt{s},a,b)(x)+a-c_{\mathbf{m}}^{\kappa}(\mathtt{s},a,b)\right|=0.$
		\end{enumerate}
	\end{itemize}
This gives the desired result.
\end{proof}
	Now, to prove the other items of Theorem \ref{thm global E-states}, we proceed similarly by replacing $U(a,b)$ by one of the following open sets
	\begin{align*}
		V(a,c)&\triangleq\Big\{(b,\check{r}_+,\check{r}_-)\in(a,\infty)\times X_{\mathbf{m}}^{s,\sigma}\quad\textnormal{s.t.}\quad\min\big(\mathtt{m}(a,b),\mathtt{m}_{+}(b,c),\mathtt{m}_{-}(a,c)\big)>0\Big\},\\
		W(b,c)&\triangleq\Big\{(a,\check{r}_+,\check{r}_-)\in(-\infty,b)\times X_{\mathbf{m}}^{s,\sigma}\quad\textnormal{s.t.}\quad\min\big(\mathtt{m}(a,b),\mathtt{m}_{+}(b,c),\mathtt{m}_{-}(a,c)\big)>0\Big\},\\
		Z(c)&\triangleq\Big\{(a,\check{r}_+,\check{r}_-)\in(0,\infty)\times X_{\mathbf{m}}^{s,\sigma}\quad\textnormal{s.t.}\quad\min\big(\mathtt{m}(-a,a),\mathtt{m}_{+}(a,c),\mathtt{m}_{-}(-a,c)\big)>0\Big\}
	\end{align*}
 and $F_n(a,b)$ by one of the following closed and bounded sets
 \begin{align*}
 	G_{n}(a,c)&\triangleq\Big\{(b,\check{r}_+,\check{r}_-)\in[a+\tfrac{1}{n},a+n]\times B_{\mathbf{m}}^{s,\sigma}(n)\quad\textnormal{s.t.}\quad\min\big(\mathtt{m}(a,b),\mathtt{m}_{+}(b,c),\mathtt{m}_{-}(a,c)\big)\geqslant\tfrac{1}{n}\Big\},\\
 	H_{n}(b,c)&\triangleq\Big\{(a,\check{r}_+,\check{r}_-)\in[b-n,b-\tfrac{1}{n}]\times B_{\mathbf{m}}^{s,\sigma}(n)\quad\textnormal{s.t.}\quad\min\big(\mathtt{m}(a,b),\mathtt{m}_{+}(b,c),\mathtt{m}_{-}(a,c)\big)\geqslant\tfrac{1}{n}\Big\},\\
 	I_{n}(c)&\triangleq\Big\{(a,\check{r}_+,\check{r}_-)\in[\tfrac{1}{n},n]\times B_{\mathbf{m}}^{s,\sigma}(n)\quad\textnormal{s.t.}\quad\min\big(\mathtt{m}(-a,a),\mathtt{m}_{+}(a,c),\mathtt{m}_{-}(-a,c)\big)\geqslant\tfrac{1}{n}\Big\}.
 \end{align*}
The only difference is that in the alternative $(A2)$, one has to add, for instance in the case of the bifurcation in the parameter $b,$
\begin{enumerate}[label=\textbullet]
	\item (Vanishing degeneracy) $\displaystyle\lim_{\mathtt{s}\to\pm\infty}b_{\mathbf{m}}(\mathtt{s},a,c)=a.$
\end{enumerate}
But this situation can be included in the "Collision of the boundaries" alternative.
	
	\appendix
	\section{Elements of bifurcation theory}
	The purpose of this appendix is to expose the theoretical bifurcation results used in this work. We first start by the classical local bifurcation theorem of Crandall-Rabinowitz \cite{CR71}, see also \cite[p. 15]{K11}. The version presented here is in the analytic setting which fits more with our goal. We also add to the statement the required conditions to get a pitchfork-type bifurcation. For more details, we refer the reader to the works of Shi \cite{S99} and Liu-Shi \cite{LS22}.
	\begin{theo}\label{thm CR+S}
		\textbf{(Analytic local bifurcation + pitchfork property)}\\
		Let $X$ and $Y$ be two Banach spaces. Let $(\lambda_0,u_0)\in\mathbb{R}\times X$ and $U$ be a neighborhood of $(\lambda_0,u_0)$ in $\mathbb{R}\times X.$ Consider a real-analytic function $F:U\rightarrow Y$ such that
		\begin{itemize}
			\item [$(L1)$] $\forall(\lambda,u_0)\in U,\quad F(\lambda,u_0)=0.$
			\item [$(L2)$] $d_uF(\lambda_0,u_0)$ is a Fredholm operator with
			$$\dim\Big(\ker\big(d_uF(\lambda_0,u_0)\big)\Big)=1=\textnormal{codim}\Big(R\big(d_uF(\lambda_0,u_0)\big)\Big),\qquad\ker\big(d_uF(\lambda_0,u_0)\big)=\mathtt{span}(w_0).$$
			\item [$(L3)$] Transversality:
			$$\partial_{\lambda}d_uF(\lambda_0,u_0)\not\in R\big(d_uF(\lambda_0,u_0)\big).$$
		\end{itemize}
		If we decompose
		$$X=\mathtt{span}(w_0)\oplus Z,$$
		then there exist two real-analytic functions
		$$\lambda:(-\delta,\delta)\rightarrow\mathbb{R}\qquad\textnormal{and}\qquad z:(-\delta,\delta)\rightarrow Z,\qquad\textnormal{with}\qquad\delta>0,$$
		such that
		$$\lambda(0)=\lambda_0,\qquad z(0)=0$$
		and the set of zeros of $F$ in $U$ is the union of two curves 
		$$\big\{(\lambda,u)\in U\quad\textnormal{s.t.}\quad F(\lambda,u)=0\big\}=\big\{(\lambda,u_0)\in U\big\}\cup\mathscr{C}_{\textnormal{\tiny{local}}},\qquad\mathscr{C}_{\textnormal{\tiny{local}}}\triangleq\big\{\big(\lambda(\mathtt{s}),u_0+\mathtt{s}w_0+\mathtt{s}z(\mathtt{s})\big),\quad|\mathtt{s}|<\delta\big\}.$$
		Assume in addition that
		$$d_u^2F(\lambda_0,u_0)[w_0,w_0]\in R\big(d_uF(\lambda_0,u_0)\big).$$
		Then $\lambda'(0)=0$ and if we denote
		$$R\big(d_uF(\lambda_0,u_0)\big)=\ker(l)\qquad\textnormal{ for some }l\in Y^*,$$
		then we have
		$$\lambda''(0)=\frac{3\big\langle l\,,\,d_u^2F(\lambda_0,u_0)[w_0,\theta_0]\big\rangle-\big\langle l\,,\,d_u^3F(\lambda_0,u_0)[w_0,w_0,w_0]\big\rangle}{3\big\langle l\,,\,\partial_\lambda d_uF(\lambda_0,u_0)\big\rangle},$$
		where $\theta_0$ is solution of
		$$d_uF(\lambda_0,u_0)[\theta_0]=d_u^2F(\lambda_0,u_0)[w_0,w_0].$$
		If $\lambda''(0)\neq 0,$ we say that the bifurcation is of pitchfork-type. More precisely, the condition $\lambda''(0)>0$ (resp. $\lambda''(0)<0$) is called supercritical (resp. subcritical) bifurcation.
	\end{theo}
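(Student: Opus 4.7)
The plan is to establish existence via an analytic Lyapunov–Schmidt reduction, then to extract the Taylor coefficients of $\lambda(\mathtt{s})$ by differentiating the defining equation along the curve.

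\textbf{Lyapunov–Schmidt reduction.} Using $(L2)$, I would decompose $X = \mathtt{span}(w_0) \oplus Z$ and $Y = R \oplus Y_1$ with $R \triangleq R(d_uF(\lambda_0,u_0))$ and $\dim(Y_1) = 1$, and call $P : Y \to R$ the associated continuous projection, $Q \triangleq \mathrm{Id}_Y - P$. Writing $u = u_0 + \mathtt{s}w_0 + z$ with $z \in Z$, the equation $F(\lambda, u) = 0$ splits into
$$P\, F(\lambda, u_0 + \mathtt{s}w_0 + z) = 0, \qquad Q\, F(\lambda, u_0 + \mathtt{s}w_0 + z) = 0.$$
The derivative in $z$ of the first equation at $(\lambda_0, 0, 0)$ is $P \circ d_uF(\lambda_0,u_0)|_Z : Z \to R$, an isomorphism by construction. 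Since $F$ is real-analytic, the analytic implicit function theorem yields a unique real-analytic $z = z(\lambda, \mathtt{s})$ with $z(\lambda_0, 0) = 0$; the trivial-line condition $(L1)$ forces $z(\lambda, 0) \equiv 0$, so $z(\lambda, \mathtt{s}) = \mathtt{s}\,\tilde z(\lambda, \mathtt{s})$ with $\tilde z$ real-analytic. The bifurcation equation then reduces to $\mathtt{s}\,\Phi(\lambda, \mathtt{s}) = 0$ where $\Phi$ is real-analytic valued in $Y_1$. A direct computation gives $\partial_\lambda \Phi(\lambda_0, 0) = Q\, \partial_\lambda d_uF(\lambda_0,u_0)[w_0]$, which is nonzero by the transversality condition $(L3)$, so a second application of the analytic implicit function theorem produces a unique real-analytic $\lambda : (-\delta, \delta) \to \mathbb{R}$ with $\lambda(0) = \lambda_0$ and $\Phi(\lambda(\mathtt{s}), \mathtt{s}) = 0$. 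Setting $z(\mathtt{s}) \triangleq \tilde z(\lambda(\mathtt{s}), \mathtt{s})$ defines $\mathscr{C}_{\textnormal{\tiny{local}}}$, and the two uniqueness statements imply the zero set of $F$ in $U$ is exactly the union of the trivial line with $\mathscr{C}_{\textnormal{\tiny{local}}}$.

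\textbf{First order and vanishing of $\lambda'(0)$.} I would now set $u(\mathtt{s}) \triangleq u_0 + \mathtt{s}w_0 + \mathtt{s}z(\mathtt{s})$, so $F(\lambda(\mathtt{s}), u(\mathtt{s})) \equiv 0$ and $u'(0) = w_0$. The triviality condition $(L1)$ gives $\partial_\lambda^k F(\lambda, u_0) \equiv 0$ for all $k \geqslant 1$, so differentiating the identity twice in $\mathtt{s}$ and evaluating at $\mathtt{s} = 0$ yields
$$2\lambda'(0)\, \partial_\lambda d_uF(\lambda_0,u_0)[w_0] + d_uF(\lambda_0,u_0)[u''(0)] + d_u^2F(\lambda_0,u_0)[w_0, w_0] = 0.$$
Fix $l \in Y^*$ with $\ker(l) = R$. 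Applying $l$ and using $l \circ d_uF(\lambda_0,u_0) = 0$ leaves
$$2\lambda'(0)\, \bigl\langle l,\, \partial_\lambda d_uF(\lambda_0,u_0)[w_0]\bigr\rangle + \bigl\langle l,\, d_u^2F(\lambda_0,u_0)[w_0, w_0]\bigr\rangle = 0.$$
The additional pitchfork hypothesis $d_u^2F(\lambda_0,u_0)[w_0, w_0] \in R = \ker(l)$ annihilates the second term, while $(L3)$ ensures the coefficient of $\lambda'(0)$ is nonzero; hence $\lambda'(0) = 0$.

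\textbf{Second order and the formula for $\lambda''(0)$.} With $\lambda'(0) = 0$, the second-derivative identity above becomes $d_uF(\lambda_0,u_0)[u''(0) + \theta_0] = 0$, so $u''(0) + \theta_0 \in \mathtt{span}(w_0)$ and there exists $\mu \in \mathbb{R}$ with $u''(0) = -\theta_0 + \mu w_0$. Differentiating once more, using $\lambda'(0) = 0$ and $\partial_\lambda^k F(\lambda_0, u_0) = 0$, I would obtain at $\mathtt{s} = 0$
\begin{align*}
0 &= 3\lambda''(0)\, \partial_\lambda d_uF(\lambda_0,u_0)[w_0] + d_uF(\lambda_0,u_0)[u'''(0)] \\
&\quad + 3\, d_u^2F(\lambda_0,u_0)[w_0, u''(0)] + d_u^3F(\lambda_0,u_0)[w_0,w_0,w_0].
\end{align*}
Applying $l$ kills the $u'''(0)$-term, and substituting $u''(0) = -\theta_0 + \mu w_0$ gives
$$\bigl\langle l,\, d_u^2F(\lambda_0,u_0)[w_0, u''(0)]\bigr\rangle = -\bigl\langle l,\, d_u^2F(\lambda_0,u_0)[w_0, \theta_0]\bigr\rangle + \mu\, \bigl\langle l,\, d_u^2F(\lambda_0,u_0)[w_0, w_0]\bigr\rangle,$$
and the $\mu$-contribution vanishes by the pitchfork hypothesis. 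Solving for $\lambda''(0)$ yields the announced formula. The only delicate point in the whole argument is the bookkeeping of the chain rule when passing from the Taylor expansion of $u(\mathtt{s})$ to that of $z(\mathtt{s})$, but the pitchfork hypothesis cleanly filters out every $w_0$-component appearing in $u''(0)$ because $l \circ d_u^2F(\lambda_0,u_0)[w_0, w_0] = 0$, which is why the scalar $\mu$ disappears from the final formula.
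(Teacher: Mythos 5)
Your proof is correct: the paper itself does not prove this theorem but imports it from Crandall--Rabinowitz \cite{CR71}, Kielh\"ofer \cite{K11} and Shi \cite{S99}, and your argument (analytic Lyapunov--Schmidt reduction for the existence of the local curve, then differentiating $F(\lambda(\mathtt{s}),u(\mathtt{s}))\equiv0$ twice and three times, testing against $l$, and using $u''(0)=-\theta_0+\mu w_0$ with the pitchfork hypothesis killing the $\mu$-term) is exactly the standard proof in those references. The only step stated without justification is that $\partial_\lambda\Phi(\lambda_0,0)=Q\,\partial_\lambda d_uF(\lambda_0,u_0)[w_0]$, which requires observing that $\partial_{\mathtt{s}}z(\lambda_0,0)=0$ (it lies in $\ker\big(d_uF(\lambda_0,u_0)\big)\cap Z=\{0\}$); this is routine and does not affect the validity of the argument.
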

	Finally, we present the classical global bifurcation theorem of Dancer \cite{D73}, Buffoni-Toland \cite[Thm. 9.1.1]{BT03}. The version given here is taken from \cite[Thm. 4]{CSV16}.
	\begin{theo}\label{thm BT}
		\textbf{(Analytic global bifurcation)}
		Let $X$ and $Y$ be two Banach spaces. Let $U$ be an open subset of $\mathbb{R}\times X.$ Consider a real-analytic function $F:U\rightarrow Y$ satisfying $(L1)$, $(L2)$, $(L3)$ and the following additional properties.
		\begin{itemize}
			\item [$(G1)$] For any $(\lambda,u)\in U$ such that $F(\lambda,u)=0$, the operator $d_uF(\lambda,u)$ is a Fredholm operator of index $0.$
			\item [$(G2)$] Assume that we can write
			$$U=\bigcup_{n\in\mathbb{N}}F_n,$$
			where for any $n\in\mathbb{N},$ the set $F_n$ is bounded and closed in $\mathbb{R}\times X.$ Suppose that for any $n\in\mathbb{N},$ the set 
			$$\mathscr{S}_n\triangleq\Big\{(\lambda,u)\in F_n\quad\textnormal{s.t.}\quad F(\lambda,u)=0\Big\}$$
			is compact in $\mathbb{R}\times X.$  
		\end{itemize}
		Then there exists a unique (up to reparametrization) continuous curve $\mathscr{C}_{\textnormal{\tiny{global}}}$ such that
		$$\mathscr{C}_{\textnormal{\tiny{local}}}\subset\mathscr{C}_{\textnormal{\tiny{global}}}\triangleq \Big\{\big(\lambda(\mathtt{s}),u(\mathtt{s})\big),\quad\mathtt{s}\in\mathbb{R}\Big\}\subset U\cap F^{-1}\big(\{0\}\big).$$
		Moreover, $\mathscr{C}_{\textnormal{\tiny{global}}}$ admits locally around each of its points a real-analytic parametrization. In addition, one of the following alternatives occurs
		\begin{itemize}
			\item [$(A1)$] there exists $T>0$ such that
			$$\forall\mathtt{s}\in\mathbb{R},\quad\lambda(\mathtt{s}+T)=\lambda(\mathtt{s})\qquad\textnormal{and}\qquad u(\mathtt{s}+T)=u(\mathtt{s}).$$
			\item [$(A2)$] for any $n\in\mathbb{N},$ there exists $\mathtt{s}_n>0$ such that 
			$$\forall\mathtt{s}>\mathtt{s}_n,\quad\big(\lambda(\mathtt{s}),u(\mathtt{s})\big)\not\in F_n.$$
		\end{itemize}
	\end{theo}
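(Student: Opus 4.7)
The plan is to construct the global curve by analytic continuation of $\mathscr{C}_{\textnormal{\tiny{local}}}$, using the Fredholm structure together with the real-analyticity to describe the local geometry of the zero set, and then invoking the compactness hypothesis $(G2)$ to obtain the dichotomy $(A1)$/$(A2)$.

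First I would establish the local structure of $F^{-1}(\{0\})$ in $U$. At any point $(\lambda_*,u_*)\in U\cap F^{-1}(\{0\})$, hypothesis $(G1)$ gives that $d_uF(\lambda_*,u_*)$ is Fredholm of index zero. Performing a Lyapunov-Schmidt reduction, one writes $X=\ker\big(d_uF(\lambda_*,u_*)\big)\oplus X_1$ and $Y=Y_1\oplus\mathcal{R}\big(d_uF(\lambda_*,u_*)\big)$ with $\dim(Y_1)=\dim\big(\ker(d_uF(\lambda_*,u_*))\big)<\infty$, and reduces $F=0$ near $(\lambda_*,u_*)$ to a finite-dimensional real-analytic equation $\Phi(\lambda,\xi)=0$ on $\mathbb{R}\times\ker\big(d_uF(\lambda_*,u_*)\big)$. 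When the kernel is trivial (regular point), the analytic implicit function theorem yields a unique real-analytic branch $u=u(\lambda)$. When the kernel is non-trivial, the zero set of $\Phi$ near the origin is, by the Weierstrass preparation theorem followed by Puiseux-type analysis for real-analytic varieties, a finite union of real-analytic arcs meeting at $(\lambda_*,u_*)$; in particular $F^{-1}(\{0\})$ is locally a one-dimensional real-analytic variety with finitely many branches.

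Next, I would define $\mathscr{C}_{\textnormal{\tiny{global}}}$ as a maximal connected real-analytic extension of $\mathscr{C}_{\textnormal{\tiny{local}}}$ inside $F^{-1}(\{0\})$. Concretely, starting from the Crandall-Rabinowitz parametrization $\mathtt{s}\in(-\delta,\delta)\mapsto\big(\lambda(\mathtt{s}),u(\mathtt{s})\big)$, one extends on both sides by iterating the local continuation from the previous step: at regular points one glues analytic graphs via the implicit function theorem; at a branching point one selects the unique analytic branch prolonging the incoming parametrization by matching the limit tangent direction obtained from the Puiseux expansion, and reparametrizes real-analytically if needed. Uniqueness up to reparametrization follows from the identity principle: two real-analytic continuations that coincide on a subarc must coincide on their common domain. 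This produces a curve $\big(\lambda(\mathtt{s}),u(\mathtt{s})\big)$ defined on a maximal open connected interval $J\subset\mathbb{R}$, which after an affine reparametrization may be assumed to equal $\mathbb{R}$ or a bounded interval.

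Finally, to establish the dichotomy I use $(G2)$. Suppose $J$ contains a half-line, say $[0,\infty)$; if there exists $n\in\mathbb{N}$ such that $\big(\lambda(\mathtt{s}),u(\mathtt{s})\big)\in F_n$ for all $\mathtt{s}\geqslant 0$, then by compactness of $\mathscr{S}_n$ any sequence $\mathtt{s}_k\to\infty$ admits a subsequence along which $\big(\lambda(\mathtt{s}_k),u(\mathtt{s}_k)\big)\to(\lambda_\infty,u_\infty)\in\mathscr{S}_n\subset U\cap F^{-1}(\{0\})$. Applying the local structure theorem at $(\lambda_\infty,u_\infty)$ and the analytic identity principle, the curve must eventually re-enter one of the finitely many local branches at this limit point, forcing the curve to close up: this yields a period $T>0$ as in $(A1)$. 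Otherwise, for every $n$ the curve leaves $F_n$ after some $\mathtt{s}_n>0$, which is precisely $(A2)$. The case where $J$ is bounded is ruled out by the same compactness argument, since a limit point in $U$ would permit a further prolongation contradicting maximality.

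The main obstacle is the careful treatment of branching points where $\ker\big(d_uF\big)$ is non-trivial: one must guarantee that the continuation selects the analytic branch that genuinely prolongs the incoming parametrization, and that the resulting global curve is unique up to reparametrization. This is where real-analyticity (rather than mere smoothness) is essential, through the finiteness and explicit Puiseux description of local branches and through the identity principle which prevents bifurcation of distinct analytic continuations.
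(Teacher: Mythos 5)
You should first note that the paper does not prove Theorem \ref{thm BT} at all: it is stated in the appendix as a quoted classical result of Dancer and Buffoni--Toland, in the form given in \cite[Thm. 4]{CSV16}. So there is no in-paper proof to compare against; what can be assessed is whether your sketch is a viable outline of the known argument. It is broadly faithful to the Buffoni--Toland strategy (Lyapunov--Schmidt reduction at each zero, real-analytic structure of the reduced zero set, analytic continuation of the distinguished arc, compactness forcing the loop/escape dichotomy), but two steps are glossed over in a way that hides the actual difficulty.

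First, after the Lyapunov--Schmidt reduction the zero set of $\Phi$ on $\mathbb{R}\times\ker\big(d_uF(\lambda_*,u_*)\big)$ is a real-analytic variety that need \emph{not} be one-dimensional: $(G1)$ only gives Fredholm index zero, so the kernel may have dimension $k>1$ away from the bifurcation point, and the germ of $\Phi^{-1}(\{0\})$ can contain components of any dimension up to $k$. The hard content of \cite[Chap. 7--9]{BT03} is precisely to show that a one-dimensional analytic arc sitting inside such a variety admits a unique maximal analytic continuation, and the selection at a crossing is made by analytic continuation of the arc itself (identity principle for the parametrization), not by ``matching the limit tangent direction'' --- two distinct irreducible real-analytic branches can be mutually tangent to finite order, so the tangent direction does not determine the continuation. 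Second, your dichotomy is logically mis-stated: the negation of $(A2)$ is that the curve returns to some fixed $F_n$ for a sequence $\mathtt{s}_k\to\infty$ (recurrence), not that it stays in $F_n$ for all large $\mathtt{s}$ (permanence). Your compactness argument in fact only needs recurrence, so the slip is repairable, but as written the case ``the curve leaves every $F_n$ yet revisits some $F_{n_0}$ infinitely often'' falls through the cracks of your case split. With these two points repaired --- and with the understanding that the real-analytic variety theory being invoked is a substantial imported black box rather than something proved by ``Weierstrass preparation plus Puiseux'' --- the outline matches the standard proof.
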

\section{A useful lemma in linear algebra}
We present here a simple lemma used in our analysis to describe the range of the linearized operator.
\begin{lem}\label{lem codim1}
	Let $E$ be a vector space over a field $\mathbb{K}.$ Let $V_1$ and $V_2$ be two subspaces of $E$ of codimension one in $E$ and such that $V_2\subset V_1.$ Then $V_2=V_1.$
\end{lem}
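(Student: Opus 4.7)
The plan is to argue by contradiction, using the fact that codimension one means any strict superspace must be the whole space. Suppose that $V_2 \subsetneq V_1$, so that there exists a vector $x \in V_1 \setminus V_2$.

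Since $V_2$ has codimension one in $E$, for any $x \in E \setminus V_2$ we have the direct sum decomposition $E = V_2 \oplus \mathbb{K} x$. Applying this to the vector $x \in V_1 \setminus V_2$ chosen above, I obtain $E = V_2 \oplus \mathbb{K} x$. Now both $V_2$ and $\mathbb{K} x$ are contained in $V_1$ (the first by hypothesis, the second because $x \in V_1$), hence their sum is contained in $V_1$. This yields $E \subset V_1$, so $V_1 = E$, contradicting the hypothesis that $V_1$ has codimension one in $E$. Therefore $V_2 = V_1$.

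There is no real obstacle here: the argument is entirely elementary and only uses the definition of codimension one via a complementary line. An equivalent, slightly more structural way to present the same proof would be to consider the canonical surjection $\pi : E / V_2 \twoheadrightarrow E / V_1$, note that $\dim(E/V_2) = \dim(E/V_1) = 1$, so $\pi$ is an isomorphism, and conclude that $V_1 = \pi^{-1}(0) = V_2$. Either formulation fits in a few lines.
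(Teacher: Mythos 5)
Your proof is correct and is essentially the paper's own argument: both rest on the direct-sum decomposition $E=V_2\oplus\mathbb{K}x$ furnished by the codimension-one hypothesis and derive the contradiction $E\subset V_1$. The only step you invoke that the paper does not need — that \emph{any} $x\notin V_2$ spans a complement of $V_2$ — is immediate since the image of $x$ in the one-dimensional quotient $E/V_2$ is nonzero and hence a basis, so no gap remains.
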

\begin{proof}
	By definition, since $V_1$ and $V_2$ are of codimension one in $E$, then there exists $(u_1,u_2)\in E^2$ such that $u_1\not\in V_1$, $u_2\not\in V_2$ and
	\begin{equation}\label{split V1V2}
		V_1\oplus\langle u_1\rangle=E=V_2\oplus\langle u_2\rangle.
	\end{equation}
	Let $v_1\in V_1.$ According to \eqref{split V1V2}, there exists $(v_2,\lambda_u)\in V_2\times\mathbb{K}$ such that
	$$v_1=v_2+\lambda_u u_2.$$
	Assume for the sake of contradiction that $\lambda_u\neq0.$ Then, using the fact that $V_1$ is a vector space together with $v_1\in V_1$ and $v_2\in V_2\subset V_1,$ we have
	$$u_2=\lambda_u^{-1}(v_1-v_2)\in V_1.$$
	Combined with the hypothesis $V_2\subset V_1$, we deduce that
	$$V_2\oplus\langle u_2\rangle\subset V_1.$$
	This contradicts \eqref{split V1V2}. Thus, $\lambda_u=0$ and $v_1=v_2\in V_2.$ This achieves the proof of the~lemma.
\end{proof}
We mention that this lemma is just a reformulation of the following classical result on hyperplanes/linear forms.
\begin{lem}
	Let $E$ be a vector space over a field $\mathbb{K}$. Then,
	\begin{enumerate}
		\item The hyperplanes (subspaces of codimension one) of $E$ are exactly the kernels of non-zero linear forms on $E$.
		\item Let $\varphi$ be a non-zero linear form on $E$. Then, any linear form vanishing on  the hyperplane $\ker(\varphi)$ is proportional to $\varphi$.
		\item Two non-zero linear forms on $E$ have the same kernel if and only if they are proportional. 
	\end{enumerate}
\end{lem}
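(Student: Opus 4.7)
The plan is to prove the three items in order, with item 3 following directly from items 1 and 2. Throughout, no dimensional hypothesis is imposed on $E$, so the key tool is the existence of a direct sum splitting $E = H \oplus \langle u \rangle$ for any hyperplane $H$ and any $u \in E \setminus H$, which is an immediate consequence of the definition of codimension one.

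For item 1, I would establish both inclusions. If $\varphi$ is a non-zero linear form, the first isomorphism theorem yields $E/\ker(\varphi) \simeq \mathrm{Im}(\varphi)$, and since $\mathrm{Im}(\varphi)$ is a non-zero subspace of $\mathbb{K}$, it equals $\mathbb{K}$, so $\ker(\varphi)$ has codimension one. Conversely, given a hyperplane $H$, pick $u \in E \setminus H$, so that $E = H \oplus \langle u \rangle$, and define $\varphi(h + \lambda u) \triangleq \lambda$. This $\varphi$ is linear, non-zero (since $\varphi(u) = 1$) and its kernel is exactly $H$.

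For item 2, let $\varphi$ be non-zero and pick $u \notin \ker(\varphi)$, so $\varphi(u) \neq 0$. Item 1 gives $E = \ker(\varphi) \oplus \langle u \rangle$, and every $x \in E$ writes uniquely as $x = h + \lambda u$ with $h \in \ker(\varphi)$ and $\lambda \in \mathbb{K}$. Applying $\varphi$ yields $\lambda = \varphi(x)/\varphi(u)$. If $\psi$ is a linear form vanishing on $\ker(\varphi)$, then
$$\psi(x) = \psi(h) + \lambda \psi(u) = \lambda \psi(u) = \frac{\psi(u)}{\varphi(u)}\,\varphi(x),$$
which proves $\psi = \mu\,\varphi$ with $\mu = \psi(u)/\varphi(u)$.

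For item 3, one direction is immediate: if $\psi = \mu\,\varphi$ with $\mu \neq 0$, then obviously $\ker(\psi) = \ker(\varphi)$. Conversely, if two non-zero linear forms $\varphi$ and $\psi$ share the same kernel, then $\psi$ vanishes on $\ker(\varphi)$, and item 2 yields $\psi = \mu\,\varphi$ for some scalar $\mu$, which must be non-zero since $\psi \neq 0$. There is no real obstacle here: the whole argument reduces to the splitting produced by a single vector outside the hyperplane, and the proof is purely formal, working equally well in finite and infinite dimension.
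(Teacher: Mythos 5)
Your proof is correct and complete. Note that the paper itself states this lemma without proof, presenting it only as a classical reformulation of the preceding Lemma \ref{lem codim1}, so there is no argument in the paper to compare against; your verification fills that (deliberate) omission. The device you rely on throughout --- the splitting $E=H\oplus\langle u\rangle$ obtained from a single vector $u\notin H$ --- is exactly the mechanism the paper uses in its proof of Lemma \ref{lem codim1}, so your argument is entirely in the spirit of the text. One small point worth keeping as you have it: in item 1 the converse direction does require the observation that codimension one forces $E=H\oplus\langle u\rangle$ for \emph{any} $u\notin H$ (the class of $u$ in the one-dimensional quotient $E/H$ is nonzero, hence spans it), which you correctly invoke; with that, the linear form $\varphi(h+\lambda u)\triangleq\lambda$ is well defined and does the job in arbitrary dimension.
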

	
\end{document}